\newtheorem{thm}{Theorem}[section]
\newtheorem{prop}[thm]{Proposition}
\newtheorem{lemma}[thm]{Lemma}
\theoremstyle{remark}
\newtheorem{rmk}[thm]{Remark}
\newtheorem{example}[thm]{Example}
\theoremstyle{definition}
\newtheorem{defn}[thm]{Definition}
\newcommand{\bi}{\begin{itemize}}
\newcommand{\ei}{\end{itemize}}
\newcommand{\be}{\begin{enumerate}}
\newcommand{\ee}{\end{enumerate}}
\newcommand{\C}{\mathbb{C}}
\renewcommand{\H}{\mathcal{H}}
\newcommand{\R}{\mathbb{R}}
\newcommand{\N}{\mathbb{N}}
\newcommand{\Z}{\mathbb{Z}}
\newcommand{\BH}{\mathcal{B}(\mathcal{H})}
\providecommand{\keywords}[1]{{\textit{Keywords and phrases:}} #1}
\providecommand{\classification}[1]{{\textit{2010 Mathematics Subject Classification:}} #1}
\def\IoIIdimdots(#1/#2/#3,#4){\node at (#1,#4) {$.$};\node at (#2,#4) {$.$};\node at (#3,#4) {$.$};}
\def\IIoIIdimdots(#1,#2/#3/#4){\node at (#1,#2) {$.$};\node at (#1,#3) {$.$};\node at (#1,#4) {$.$};}
\def\IoIIIdimdots(#1/#2/#3,#4,#5){\node at (#1,#4,#5) {$.$};\node at (#2,#4,#5) {$.$};\node at (#3,#4,#5) {$.$};}
\def\IIoIIIdimdots(#1,#2/#3/#4,#5){\node at (#1,#2,#5) {$.$};\node at (#1,#3,#5) {$.$};\node at (#1,#4,#5) {$.$};}
\def\IIIoIIIdimdots(#1,#2,#3/#4/#5){\node at (#1,#2,#3) {$.$};\node at (#1,#2,#4) {$.$};\node at (#1,#2,#5) {$.$};}
\begin{document}

\title{Monic representations of finite higher-rank graphs}
\author{Carla Farsi, Elizabeth Gillaspy, Palle Jorgensen,  Sooran Kang, and Judith Packer}
\date{\today}
\maketitle

\begin{abstract} 
In this paper   we  define the notion of monic representation for the $C^*$-algebras of finite higher-rank graphs with no sources, and undertake a comprehensive study of them.   Monic representations are the representations  that, when restricted to the commutative $C^*$-algebra of the continuous functions on  the infinite path space, admit a cyclic vector. We {link monic representations to the $\Lambda$-semibranching representations previously studied by Farsi, Gillaspy, Kang, and Packer, and }also provide  a  universal  representation model for nonnegative monic representations.

\end{abstract}

\classification{46L05, 46L55, 46K10}

\keywords{$C^*$-algebras, monic representations, higher-rank graphs, $k$-graphs,  $\Lambda$-semibranching function systems, coding map, Markov measures, projective systems.}

\tableofcontents

\section{Introduction}

Higher-rank graphs $\Lambda$ -- also known as $k$-graphs -- and their $C^*$-algebras $C^*(\Lambda)$ were introduced by Kumjian and Pask in \cite{KP}, building on the work of Robertson and Steger \cite{RS-London, RS}. 
Generalizations of the 
Cuntz--Krieger $C^*$-algebras associated to directed graphs (cf.~\cite{Cuntz, Cuntz-Krieger, enomoto-watatani,kpr}), $k$-graph  $C^*$-algebras  
share many of the important properties of Cuntz and Cuntz--Krieger  $C^*$-algebras, including Cuntz--Krieger uniqueness theorems and realizations as 
groupoid $C^*$-algebras.
Moreover, the $C^*$-algebras of higher-rank graphs  are closely linked with orbit equivalence for shift spaces \cite{carlsen-ruiz-sims} and with symbolic dynamics more generally \cite{pask-raeburn-weaver,skalski-zacharias-houston, pask-sierakowski-sims}, as well as with fractals and self-similar structures \cite{FGJKP1, FGJKP2}. More links between higher-rank graphs and symbolic dynamics can be seen via \cite{bezuglyi-k-m, bezuglyi-four-auth} and the references cited therein. 
 
 The research presented in the pages that follow develops a non-commutative harmonic analysis for finite higher-rank graphs with no sources.  More precisely, we  introduce monic 
 representations for the $C^*$-algebras associated to finite higher-rank graphs with no sources; undertake a detailed theoretical analysis of such representations;
 and  present a variety of examples.

Like the Cuntz--Krieger algebras,  $k$-graph $C^*$-algebras often fall in a class of non-type I, and in fact purely infinite $C^*$-algebras. The significance of this for representation theory is that the unitary equivalence classes of irreducible representations of $k$-graph $C^*$-algebras  do not arise as Borel cross sections  \cite{Glimm1,Glimm2,Dixmier,Effros1,Effros2}. 
  In short, for these $C^*$-algebras, only subfamilies of irreducible representations admit ``reasonable'' parametrizations. 
  
  Various specific subclasses of representations of Cuntz and Cuntz--Krieger $C^*$-algebras have been extensively studied by many researchers, who were motivated by their applicability to a wide variety of fields.  In addition to  connections with wavelets  (cf. \cite{dutkay-jorgensen-mart,dutkay-jorgensen-four,MP,FGKP, FGKPexcursions,FGJKP2}), representations of Cuntz--Krieger algebras have been linked to  fractals and  Cantor sets \cite{Str,kawamura-2016,FGJKP1,FGJKP2} and to the endomorphism group  of a Hilbert space \cite{bratteli-jorgensen-price,Laca-Thesis}. Indeed, the astonishing goal of identifying both discrete and continuous series of representations of Cuntz (and to some extent Cuntz--Krieger) $C^*$-algebras,  was accomplished in \cite{dutkay-jorgensen-monic,dutkay-jorgensen-atomic,bezuglyi-jorgensen}, building on the pioneering results of \cite{BJ-atomic}.

In the setting of higher-rank graphs, however, the representation theory of these $C^*$-algebras is in its infancy.  
Although the primitive ideal space of higher-rank graph $C^*$-algebras is well understood \cite{CKSS, Kang-Pask},    representations of $k$-graph $C^*$-algebras have only been systematically studied in the one-vertex case \cite{dav-pow-yan-atomic,Yang-End,davidson-yang-represent}. 
This motivated us to undertake 
the present detailed study of monic representations of $k$-graph $C^*$-algebras and their unitary equivalence classes.   Despite the similarities between the Cuntz algebras and $k$-graph $C^*$-algebras which we have highlighted above, there are   fundamental structural differences between them: for example, $k$-graph $C^*$-algebras need not be simple, nor is their $K$-theory known in general.  Thus, the extension of  results on representations for Cuntz algebras to the $k$-graph context is not automatic, and we are pleasantly surprised to have obtained such extensions in the pages that follow.

  The monic representations that we focus on in this paper were inspired in part by the wavelet theory for higher-rank graphs which was developed in \cite{FGKP}.  These wavelets relied on the concept of $\Lambda$-semibranching function systems, introduced in \cite{FGKP} and further studied in \cite{FGJorKP}.  In this paper,  we refine the $\Lambda$-semibranching function systems into a crucial technical tool for studying monic representations, namely the $\Lambda$-projective systems of Definition \ref{def:lambda-proj-system}.  
  Monic representations also have strong connections to  Markov measures \cite{dutkay-jorgensen-monic, bezuglyi-jorgensen}  and Nelson's universal representation of an abelian algebra \cite{nelson}.  Indeed,  studying monic representations enables us to convert questions about the representation theory of higher-rank graphs into measure-theoretic questions (see Theorems \ref{thm-disjoint-monic-repres} and \ref{thm-irred-monic-repres} below).

  This paper is organized as follows.  We begin with an introductory section which reviews the basic notation and terminology for higher-rank graphs, as well as the $\Lambda$-semibranching function systems from \cite{FGKP}.
  Before turning our attention to a theoretic and systematic analysis of the monic  representations of finite $k$-graph $C^*$-algebras, Section \ref{sec-repres-k-graphs} develops
   the technical tools we will need for this analysis.  The $\Lambda$-semibranching function systems of \cite{FGKP} are refined in Section \ref{sec:lambda-proj-systems} into  $\Lambda$-projective systems, and  Section \ref{sec:Proj-valued-measures} analyzes the projection-valued measure $P = P_\pi$  on the infinite path space $\Lambda^\infty$ which arises 
from  a representation $\pi$ of $C^*(\Lambda)$.

   Section  \ref{sec-repres-k-graphs} ends by addressing the question of when representations of $k$-graph $C^*$-algebras are disjoint or irreducible, see Theorems  \ref{thm-disjoint-monic-repres} and
       \ref{thm-irred-monic-repres}. To be precise, 
     Theorem \ref{thm-disjoint-monic-repres} 
       shows that for representations of $C^*(\Lambda)$ arising from $\Lambda$-projective systems, the   task of checking when two representations are 
        equivalent reduces to a measure-theoretical problem.
 Theorem \ref{thm-irred-monic-repres} characterizes the commutant of such representations, enabling a precise description of when a representation arising from a $\Lambda$-projective system is irreducible.

  Having laid the necessary technical groundwork, we undertake the promised analysis of  monic representations of $C^*(\Lambda)$  in Section \ref{sec:monic-results}.  This section contains two  of the main results of this paper as well as a number of examples of monic representations.
  Theorem \ref{thm-characterization-monic-repres} establishes that, when $\Lambda$ is a  finite and  source-free $k$-graph,  monic representations of $C^*(\Lambda)$ are always unitarily equivalent to a $\Lambda$-projective representation on $\Lambda^\infty$. 
   Theorem \ref{prop-conv-to-8-7}  
   gives an alternative, measure-theoretic characterization of when a $\Lambda$-semibranching function system gives rise to a monic representation.  More precisely, Theorem \ref{prop-conv-to-8-7} proves that a $\Lambda$-semibranching representation is monic if and only if the measure-theoretic subsets specified by the $\Lambda$-semibranching function system (see Definition \ref{def-lambda-SBFS-1} below)
  generate the $\sigma$-algebra.

The final section,  Section \ref{sec:univ_repn}, relates monic representations to Nelson's universal Hilbert space, which we denote $\H(\Lambda^\infty)$.  Theorem \ref{prop-universal-name} shows that
  every monic representation whose associated $\Lambda$-projective system consists of positive functions is unitarily equivalent to a sub-representation of the so-called ``universal representation'' of $C^*(\Lambda)$ on $\H(\Lambda^\infty)$ which is described in Proposition \ref{prop-univ-repres}.  
In particular, this Theorem establishes that every representation of $C^*(\Lambda)$ which arises from a $\Lambda$-semibranching function   system, as in \cite{FGKP}, is unitarily equivalent to a sub-representation of the universal representation.

\subsection*{Acknowledgments}   	 
The authors would like to thank Daniel Gon\c calves, Janos Englander and Alex Kumjian  for helpful discussions.  
E.G.~was partially supported by   the Deutsches Forschungsgemeinschaft via the SFB 878 ``Groups, Geometry, and Actions'' of the Universit\"at M\"unster. 	 
S.K.~was supported by Basic Science Research Program through the National Research Foundation of Korea (NRF) funded by the Ministry of Education (\#2017R1D1A1B03034697).
C.F. and J.P.~were  partially supported by two individual  grants from the Simons Foundation (C.F. \#523991; J.P. \#316981).
P.J. thanks his colleagues in the Math Department at the University of Colorado, for making a week-long visit there possible, for support, and for kind hospitality.  
Progress towards the completion of this manuscript was made by the first three named co-authors while in attendance at the Fields Institute (Toronto) and the Mathematical Congress of the Americas (Montreal) in 2017; we are grateful for their support of our collaboration.
C.F. also thanks IMPAN for hospitality during her visits to IMPAN, Warsaw, Poland, where some of this work was carried out (grant \#3542/H2020/2016/2). This paper was partially supported by the grant H2020-MSCA-RISE-2015-691246-QUANTUM DYNAMICS.

\section{Foundational material}
\label{sec-fund-mater}

\subsection{Higher-rank graphs}

 We  recall the definition of  higher-rank graphs and their $C^*$-algebras from  \cite{KP}. 
  
Let $\N=\{0,1,2,\dots\}$ denote the monoid of natural numbers under addition, and let $k\in \N$ with $k\ge 1$. We write $e_1,\dots e_k$ for the standard basis vectors of $\N^k$, where $e_i$ is the vector of $\N^k$ with $1$ in the $i$-th position and $0$ everywhere else.

\begin{defn} \cite[Definition 1.1]{KP}
\label{def-higher-rank-gr}
A \emph{higher-rank graph} or \emph{$k$-graph} is a countable small category\footnote{Recall that a small category is one  in which  the collection of arrows is a  set.} $\Lambda$ 
 with a degree functor $d:\Lambda\to \N^k$ satisfying the \emph{factorization property}: for any morphism $\lambda\in\Lambda$ and any $m, n \in \N^k$ such that  $d(\lambda)=m+n \in \N^k$,  there exist unique morphisms $\mu,\nu\in\Lambda$ such that $\lambda=\mu\nu$ and $d(\mu)=m$, $d(\nu)=n$. 
\end{defn}



When discussing $k$-graphs, we use the arrows-only picture of category theory; thus, objects in $\Lambda$ are identified with identity morphisms, and the notation $ \lambda \in \Lambda $ means $\lambda$ is a morphism in $\Lambda$.
We often regard $k$-graphs as a generalization of directed graphs, so we call morphisms $\lambda\in\Lambda$ \emph{paths} in $\Lambda$, and the objects (identity morphisms) are often called \emph{vertices}. For $n\in\N^k$
, we write
\begin{equation}
\label{eq:Lambda-n}
\Lambda^n:=\{\lambda\in\Lambda\,:\, d(\lambda)=n\}\
\end{equation}
With this notation, note that $\Lambda^0$ is the set of objects (vertices) of $\Lambda$, and we will call elements of $\Lambda^{e_i}$ (for any $i$) \emph{edges}.  
We write $r,s:\Lambda\to \Lambda^0$ for the range and source maps in $\Lambda$ respectively.  For vertices $v, w \in \Lambda^0$, we define
\[v\Lambda w:=\{\lambda\in\Lambda\,:\, r(\lambda)=v,\;s(\lambda)=w\} \quad \text{and} \quad  v\Lambda^n:= \{ \lambda \in \Lambda: r(\lambda) = v, \ d(\lambda) = n\}.\]

Our focus in this paper is on  finite $k$-graphs with no sources.
A $k$-graph $\Lambda$ is \emph{finite} if $\Lambda^n$ is a finite set for all $n\in\N^k$.
We say that $\Lambda$  \emph{has no sources} or \emph{is source-free} if $v\Lambda^n\ne \emptyset$ for all $v\in\Lambda^0$ and $n\in\N^k$. It is well known that this is equivalent to the condition that $v\Lambda^{e_i}\ne \emptyset$ for all $v\in \Lambda$ and all basis vectors $e_i$ of $\N^k$. 

For $m,n\in\N^k$, we write $m\vee n$ for the coordinatewise maximum of $m$ and $n$. Given  $\lambda,\eta\in \Lambda$, we write
\begin{equation}\label{eq:lambda_min}
\Lambda^{\operatorname{min}}(\lambda,\eta):=\{(\alpha,\beta)\in\Lambda\times\Lambda\,:\, \lambda\alpha=\eta\beta,\; d(\lambda\alpha)=d(\lambda)\vee d(\eta)\}.
\end{equation}
If $k=1$, then $\Lambda^{\operatorname{min}}(\lambda, \eta)$ will have at most one element; this need not be true 
if $k > 1$.

For finite source-free $k$-graphs $\Lambda$, for each $1 \leq i \leq k$, we can define the \emph{$i$th vertex matrix} $A_i \in M_{\Lambda^0}(\N)$ by 
\begin{equation} A_i(v,w) = |v\Lambda^{e_i} w|.\label{eq:adj-mx}
\end{equation}
Observe that the factorization property implies that $A_iA_j=A_jA_i$ for $1\le i,j\le k$. 

%
{We now describe two fundamental examples of higher-rank graphs which were first mentioned in the foundational paper \cite{KP}.}  More examples of higher-rank graphs can be found in Section \ref{sec:monic-examples} below.
\begin{example}
\begin{itemize}
\item[(a)] For any directed graph $E$, let $\Lambda_E$ be the category whose objects are the vertices of $E$ and whose morphisms are the finite paths in $E$. Then $\Lambda_E$ is a 1-graph whose degree functor $d:\Lambda_E\to \N$ 
is given by $d(\eta) = |\eta|$ (the number of edges in $\eta$).
\item[(b)] For $k\ge 1$, let $\Omega_k$ be the small category with
\[
\operatorname{Obj}(\Omega_k)=\N^k,\quad \text{and}\quad \operatorname{Mor}(\Omega_k)=\{(p,q)\in \N^k\times \N^k\,:\, p\le q\}.
\]
Again, we can also view elements of $\text{Obj}(\Omega_k)$ as identity morphisms, via the map $\text{Obj}(\Omega_k) \ni p \mapsto (p, p) \in \text{Mor}(\Omega_k)$.
The range and source maps $r,s:\operatorname{Mor}(\Omega_k)\to \operatorname{Obj}(\Omega_k)$ are given by $r(p,q)=p$ and $s(p,q)=q$. If we define $d:\Omega_k\to \N^k$ by $d(p,q)=q-p$, then one can check that $\Omega_k$ is a $k$-graph with degree functor $d$.
\end{itemize}

\end{example}

\begin{defn}[\cite{KP} Definitions 2.1]
\label{def:infinite-path}
Let $\Lambda$ be a $k$-graph. An \emph{infinite path} in $\Lambda$ is a $k$-graph morphism (degree-preserving functor) $x:\Omega_k\to \Lambda$, and we write $\Lambda^\infty$ for the set of infinite paths in $\Lambda$. 
Since $\Omega_k$ has a terminal object (namely $0 \in\N^k$) but no initial object, we think of our infinite paths as having a range $r(x) : = x(0)$ but no source.
For each $m\in \N^k$, we have a shift map $\sigma^m:\Lambda^\infty \to \Lambda^\infty$ given by
\begin{equation}\label{eq:shift-map}
\sigma^m(x)(p,q)=x(p+m,q+m)
\end{equation}
for $x\in\Lambda^\infty$ and $(p,q)\in\Omega_k$. 

It is well-known that the collection of cylinder sets 
\[
Z(\lambda)=\{x\in\Lambda^\infty\,:\, x(0,d(\lambda))=\lambda\},
\]
for $\lambda \in \Lambda$, form a compact open basis for a locally compact Hausdorff topology on $\Lambda^\infty$, under  reasonable hypotheses on $\Lambda$ (in particular, when $\Lambda$ is row-finite: see Section 2 of \cite{KP}). If $\Lambda$ is  finite, then $\Lambda^\infty$ is compact in this topology.

{We also have a partially defined ``prefixing map'' $\sigma_\lambda: Z(r(\lambda)) \to Z(\lambda)$ for each $\lambda \in \Lambda$:
\[ \sigma_\lambda(x) = \lambda x = \left[ (p, q) \mapsto \begin{cases} \lambda(p, q), & q \leq d(\lambda) \\
x(p-d(\lambda), q-d(\lambda)), & p \geq d(\lambda) \\
\lambda (p, d(\lambda)) x(0, q-d(\lambda)), & p < d(\lambda) < q
\end{cases} \right]
\]}
\end{defn}
{
\begin{rmk}
The factorization rule implies an important property of infinite paths:  for any $x\in\Lambda^\infty$ and $m\in\N^k$, we have
\[
x=x(0,m)\sigma^m(x).
\]
Taking $m = p e_j$ for an arbitrary $p \in \N$ reveals that every infinite path must contain infinitely many edges of each color. 
Moreover, if we take $m = (n, n, \ldots, n) \in \N^k$ for some $n \geq 1$, the factorization rule tells us that $x(0,m)$ can be written uniquely as a ``rainbow sequence'' of edges:
\[ x(0,m) = f_1^1 f_2^1 \cdots f_k^1 f_1^2 \cdots f_k^2 f_1^3 \cdots  f_k^n,\]
where $d(f_i^j) = e_i$.

For example, suppose $\Lambda$ is a 2-graph.  We can visualize $\Lambda$ as arising from a 2-colored graph (red and blue edges).  Moreover, each infinite path $x\in\Lambda^\infty$ can be uniquely identified with  an infinite string of alternating blue and red edges (setting blue to be ``color 1'' and red to be ``color 2'').  
\label{rmk:rainbow}
\end{rmk}
We stress that even finite $k$-graphs may have nontrivial infinite paths; in an infinite path, the same edge may occur multiple times and even infinitely many times.}

Now we introduce the $C^*$-algebra associated to a finite, source-free $k$-graph $\Lambda$. 
\begin{defn}\label{def:kgraph-algebra}
Let $\Lambda$ be a finite $k$-graph with no sources. A \emph{Cuntz--Krieger $\Lambda$-family} is a collection  $\{t_\lambda:\lambda\in\Lambda\}$ of partial isometries in a $C^*$-algebra satisfying
\begin{itemize}
\item[(CK1)] $\{t_v\,:\, v\in\Lambda^0\}$ is a family of mutually orthogonal projections,
\item[(CK2)] $t_\lambda t_\eta=t_{\lambda\eta}$ if $s(\lambda)=r(\eta)$,
\item[(CK3)] $t^*_\lambda t_\lambda=t_{s(\lambda)}$ for all $\lambda\in\Lambda$,
\item[(CK4)] for all $v\in\Lambda$ and $n\in\N^k$, we have $
t_v=\sum_{\lambda\in v\Lambda^n} t_\lambda t^*_\lambda.$
\end{itemize}
The Cuntz--Krieger $C^*$-algebra $C^*(\Lambda)$ associated to $\Lambda$ is the universal $C^*$-algebra generated by a Cuntz--Krieger $\Lambda$-family.
\end{defn}
The condition (CK4) implies that for all $\lambda, \eta\in \Lambda$, we have
\begin{equation}\label{eq:CK4-2}
t_\lambda^* t_\eta=\sum_{(\alpha,\beta)\in \Lambda^{\operatorname{min}}(\lambda,\eta)} t_\alpha t^*_\beta. 
\end{equation}
It follows that $
C^*(\Lambda)=\overline{\operatorname{span}}\{t_\alpha t^*_\beta\,:\, \alpha,\beta\in\Lambda,\; s(\alpha)=s(\beta)\}.$


\subsection{$\Lambda$-semibranching function systems and their representations}

In \cite{FGKP}, separable representations of $C^*(\Lambda)$ were constructed by using $\Lambda$-semibranching function systems on measure spaces. A $\Lambda$-semibranching function system is a generalization of the semibranching function systems studied by Marcolli and Paolucci in \cite{MP}.  
As established in \cite{MP, FGKP},  $\Lambda$-semibranching function systems (and their one-dimensional counterparts) give rise to representations of $C^*(\Lambda)$, {and we provide examples of such representations in Section \ref{sec:monic-examples} below}.  Indeed, we build upon the notion of $\Lambda$-semibranching function systems in Sections \ref{sec-repres-k-graphs}  
and \ref{sec:monic-results} below to characterize the monic representations of higher-rank graphs.



\begin{defn}
\label{def-1-brach-system}\cite[Definition~2.1]{MP}\label{defn:sbfs}
Let $(X,\mu)$ be a measure space. Suppose that, for each $1\le i\le N$, we have a measurable map $\sigma_i:D_i\to X$, for some measurable subsets $D_i\subset X$. The family $\{\sigma_i\}_{i=1}^N$ is a \emph{semibranching function system} if the following holds:
\begin{itemize}
\item[(a)] Setting $R_i = \sigma_i(D_i),$ we have 
\[
\mu(X\setminus \cup_i R_i)=0,\quad\quad\mu(R_i\cap R_j)=0\;\;\text{for $i\ne j$}.
\]
\item[(b)] For each $i$, the Radon-Nikodym derivative
\[
\Phi_{\sigma_i}=\frac{d(\mu\circ\sigma_i)}{d\mu}
\]
satisfies $\Phi_{\sigma_i}>0$, $\mu$-almost everywhere on $D_i$.
\end{itemize}
A measurable map $\sigma:X\to X$ is called a \emph{coding map} for the family $\{\sigma_i\}_{i=1}^N$ if $\sigma\circ\sigma_i(x)=x$ for all $x\in D_i$.
\end{defn}

\begin{defn}\cite[Definition~3.2]{FGKP}
\label{def-lambda-SBFS-1}
Let $\Lambda$ be a finite $k$-graph and let $(X, \mu)$ be a measure space.  A \emph{$\Lambda$-semibranching function system} on $(X, \mu)$ is a collection $\{D_\lambda\}_{\lambda \in \Lambda}$ of measurable subsets of $X$, together with a family of prefixing maps $\{\tau_\lambda: D_\lambda \to X\}_{\lambda \in \Lambda}$, and a family of coding maps $\{\tau^m: X \to X\}_{m \in \N^k}$, such that
\begin{itemize}
\item[(a)] For each $m \in \N^k$, the family $\{\tau_\lambda: d(\lambda) = m\}$ is a semibranching function system, with coding map $\tau^m$.
\item[(b)] If $ v \in \Lambda^0$, then  $\tau_v = id$,  and $\mu(D_v) > 0$.
\item[(c)] Let $R_\lambda = \tau_\lambda( D_\lambda)$. For each $\lambda \in \Lambda, \nu \in s(\lambda)\Lambda$, we have $R_\nu \subseteq D_\lambda$ (up to a set of measure 0), and
\[\tau_{\lambda} \tau_\nu = \tau_{\lambda \nu}\text{ a.e.}\]
 (Note that this implies that up to a set of measure 0, $D_{\lambda \nu} = D_\nu$ whenever $s(\lambda) = r(\nu)$).
\item[(d)] The coding maps satisfy $\tau^m \circ \tau^n = \tau^{m+n}$ for any $m, n \in \N^k$.  (Note that this implies that the coding maps pairwise commute.)
\end{itemize}
\end{defn}

\begin{rmk}
\label{rmk:abs-cts-inverse}
We pause to note that  condition (c) of Definition \ref{def-lambda-SBFS-1} above implies that $D_\lambda=D_{s(\lambda)}$ and $R_\lambda\subset R_{r(\lambda)}$ for $\lambda\in\Lambda$. Also, when $\Lambda$ is a finite 1-graph, the definition of a $\Lambda$-semibranching function system is not equivalent to Definition \ref{def-1-brach-system}. In particular, Definition~\ref{def-lambda-SBFS-1}(b) implies that the domain sets $\{D_v:v\in\Lambda^0\}$ must satisfy $\mu(D_v\cap D_w)=\mu(R_v\cap R_w)=0$ for $v\ne w\in\Lambda^0$, but  Definition~\ref{def-1-brach-system} does not require that the domain sets $D_i$ be mutually disjoint $\mu$-a.e. In fact, Definition~\ref{def-lambda-SBFS-1} implies what is called condition (C-K) in Section~2.4 of \cite{bezuglyi-jorgensen}: up to a measure zero set, 
\begin{equation}
\label{eq-partition}
D_v=\bigcup_{\lambda\in v\Lambda^m} R_\lambda
\end{equation}
for all $v\in\Lambda^0$ and $m\in\N,$ since $R_v=\tau_v(D_v)=id(D_v)=D_v.$ Also notice that in the above decomposition the  intersections $R_\lambda \cap R_{\lambda'},$ $R_\lambda \not= {\lambda'},$ have measure zero. This condition  is crucial to making sense of the representation of $C^*(\Lambda)$ associated to the $\Lambda$-semibranching function system (see Theorem \ref{thm:separable-repn} below). As established in Theorem~2.22 of \cite{bezuglyi-jorgensen}, in order to obtain a representation of a 1-graph algebra $C^*(\Lambda)$ from a semibranching function system, one must also assume that the semibranching function system satisfies condition (C-K).
\end{rmk}

We pause to enumerate some properties of $\Lambda$-semibranching function systems, which can be proved by routine computations. 

\begin{rmk}
 \begin{enumerate}
 \item For any $n\in \N^k$ and any measurable $E\subseteq X$, we have  
\begin{equation} 
(\tau^n)^{-1} (E) = \bigcup_{\lambda \in \Lambda^n} \tau_\lambda(E) \quad \text{ and consequently } \quad 
\mu \circ (\tau^n)^{-1} << \mu
\end{equation}
in any $\Lambda$-semibranching function system.

 \item On $R_\lambda$, we have $(\tau_\lambda)^{-1} = \tau^n$.  Therefore, Condition (b) of Definition \ref{def-1-brach-system} implies that 
 $\mu \circ (\tau_\lambda)^{-1} << \mu$ on $R_\lambda$, and $ \frac{d(\mu \circ (\tau_\lambda)^{-1})}{d\mu}$ is  nonzero a.e.~on $R_\lambda$. 


 \end{enumerate}
 \label{rmk:lambda-sbfs-properties}
 \end{rmk}

As established in \cite{FGKP}, any $\Lambda$-semibranching function system gives rise to a representation of $C^*(\Lambda)$ { via \lq prefixing' and \lq chopping off' operators that satisfy the Cuntz-Krieger relations}.  {Intuitively, a $\Lambda$-semibranching function system is a way of encoding the Cuntz-Krieger relations at the measure-space level: the prefixing map $\tau_\lambda$ corresponds to the partial isometry $s_\lambda \in C^*(\Lambda)$.  
For the convenience of the reader, we recall the formula for these $\Lambda$-semibranching representations of $C^*(\Lambda)$.

\begin{thm}\cite[Theorem~3.5]{FGKP}\label{thm:separable-repn}
Let $\Lambda$ be a finite $k$-graph with no sources and suppose that we have a $\Lambda$-semibranching function system on a  measure space $(X,\mu)$ with prefixing maps $\{\tau_\lambda: \lambda \in \Lambda\}$ and coding maps $\{\tau^m:m\in \N^k\}$. For each $\lambda\in\Lambda$, define an operator $S_\lambda$  on $L^2(X,\mu)$ by
\[
S_\lambda\xi(x)=\chi_{R_\lambda}(x)(\Phi_{\tau_\lambda}(\tau^{d(\lambda)}(x)))^{-1/2} \xi(\tau^{d(\lambda)}(x)).
\]
Then the operators $\{S_\lambda:\lambda\in\Lambda\}$ generate a representation $\pi$ of $C^*(\Lambda)$ on $L^2(X, \mu)$.
\label{thm:SBFS-repn}
\end{thm}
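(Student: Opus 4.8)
The plan is to verify directly that the operators $\{S_\lambda : \lambda \in \Lambda\}$ satisfy the Cuntz--Krieger relations (CK1)--(CK4), and then invoke the universal property of $C^*(\Lambda)$ to obtain the representation $\pi$. First I would confirm that each $S_\lambda$ is a bounded operator on $L^2(X,\mu)$: the change-of-variables formula for the Radon--Nikodym derivative $\Phi_{\tau_\lambda}$, together with the semibranching property that $R_\lambda = \tau_\lambda(D_\lambda)$ carries $D_\lambda$ onto its range, should show that $S_\lambda$ is in fact a partial isometry with initial space $L^2(D_{s(\lambda)}, \mu)$ and range space $L^2(R_\lambda, \mu)$. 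The normalizing factor $(\Phi_{\tau_\lambda}(\tau^{d(\lambda)}(x)))^{-1/2}$ is precisely what makes the map isometric on its initial space rather than merely bounded, and computing the adjoint $S_\lambda^*$ explicitly will be the key preliminary step, since (CK3) and (CK4) both involve $S_\lambda^*$.

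The core of the argument is then a sequence of computations, one for each relation. For (CK1), since $\tau_v = \id$ and $\Phi_{\tau_v} = 1$, the operator $S_v$ is just multiplication by $\chi_{D_v}$, a projection; mutual orthogonality of $\{S_v\}$ follows from Definition \ref{def-lambda-SBFS-1}(b) and Remark \ref{rmk:abs-cts-inverse}, which give $\mu(D_v \cap D_w) = 0$ for $v \neq w$. For (CK2), the identity $S_\lambda S_\eta = S_{\lambda\eta}$ when $s(\lambda) = r(\eta)$ reduces to the cocycle relation for the Radon--Nikodym derivatives under composition, namely $\Phi_{\tau_{\lambda\eta}} = (\Phi_{\tau_\lambda} \circ \tau_\eta)\, \Phi_{\tau_\eta}$ together with the composition law $\tau_\lambda \tau_\eta = \tau_{\lambda\eta}$ from Definition \ref{def-lambda-SBFS-1}(c) and the coding-map relation $\tau^{d(\lambda)} \circ \tau^{d(\eta)} = \tau^{d(\lambda)+d(\eta)}$ from part (d). For (CK3), I would compute $S_\lambda^* S_\lambda$ using the explicit adjoint and verify it equals $S_{s(\lambda)} = \chi_{D_{s(\lambda)}}$, again via the chain rule for the Radon--Nikodym derivatives. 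For (CK4), I would use the partition result \eqref{eq-partition} of Remark \ref{rmk:abs-cts-inverse}: since $D_v = \bigcup_{\lambda \in v\Lambda^n} R_\lambda$ up to measure zero with the $R_\lambda$ pairwise disjoint $\mu$-a.e., the projections $S_\lambda S_\lambda^* = \chi_{R_\lambda}$ sum to $\chi_{D_v} = S_v$, which is exactly the Cuntz--Krieger relation.

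The main obstacle will be the careful bookkeeping of the Radon--Nikodym derivatives and the sets on which each identity holds only $\mu$-almost everywhere. In particular, verifying (CK2) and (CK3) requires the chain rule $\Phi_{\tau_{\lambda\eta}}(x) = \Phi_{\tau_\lambda}(\tau_\eta(x))\, \Phi_{\tau_\eta}(x)$ for composed prefixing maps, which must be justified from the semibranching axioms and the a.e.\ composition law $\tau_\lambda \tau_\eta = \tau_{\lambda\eta}$; one must confirm that the coding maps invert the prefixing maps in the right order, i.e.\ that $\tau^{d(\lambda)+d(\eta)} = \tau^{d(\eta)} \circ \tau^{d(\lambda)}$ on the appropriate range set, so that the square-root normalizations telescope correctly. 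Since the statement is quoted as \cite[Theorem~3.5]{FGKP}, I would expect the proof to consist precisely of these four verifications, with the analytic subtleties all concentrated in controlling the measure-zero exceptional sets and in the cocycle identity for the derivatives.
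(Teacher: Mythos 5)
Your plan is correct and is essentially the argument used for this result: the theorem is quoted from \cite[Theorem~3.5]{FGKP}, whose proof is exactly the direct verification of (CK1)--(CK4) you outline, resting on the chain rule $\Phi_{\tau_{\lambda\eta}}=(\Phi_{\tau_\lambda}\circ\tau_\eta)\,\Phi_{\tau_\eta}$, the adjoint formula $S_\lambda^*\xi = \chi_{D_\lambda}\,\Phi_{\tau_\lambda}^{1/2}\,(\xi\circ\tau_\lambda)$, and the a.e.~partition \eqref{eq-partition} for (CK4). The same computation also appears in this paper as Proposition \ref{prop:lambda-proj-repn} in the slightly more general language of $\Lambda$-projective systems, of which your $S_\lambda$ are the special case $f_\lambda=\Phi_{\tau_\lambda}(\tau^{d(\lambda)}(\cdot))^{-1/2}\chi_{R_\lambda}$ (Example \ref{ex:standard-SBFS-is-proj}).
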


\section{Representations of higher-rank graph $C^*$-algebras:  first analysis} 
\label{sec-repres-k-graphs}

We begin this section by 
developing the technical tools which we will rely on throughout the paper: $\Lambda$-projective systems and projection valued measures.
These tools enable us 
 to describe 
when certain representations of $k$-graph $C^*$-algebras are disjoint or irreducible, see Theorems  \ref{thm-disjoint-monic-repres} and
   \ref{thm-irred-monic-repres}. 


\subsection{$\Lambda$-projective systems and representations}
\label{sec:lambda-proj-systems}



The definition of a $\Lambda$-projective system generalizes to the $k$-graph setting the definition of a  monic system in \cite{dutkay-jorgensen-monic} (for the Cuntz algebras $\mathcal{O}_N$) and \cite{bezuglyi-jorgensen} (in the case of Cuntz--Krieger algebras $\mathcal{O}_A$).  We have decided to change the name because even for $\mathcal{O}_A$, not every monic system gives rise to a monic representation of $\mathcal{O}_A$. 
The word ``projective'' refers to the cocycle-like Condition (b) of Definition \ref{def:lambda-proj-system}.

\begin{defn}\label{def:lambda-proj-system}
Let $\Lambda$ be a finite $k$-graph with no sources. 
A \emph{$\Lambda$-projective system} on a measure space $(X,\mu)$ is a $\Lambda$-semibranching function system on $(X,\mu)$, with prefixing maps $\{\tau_\lambda:D_\lambda\to R_\lambda\}_{\lambda\in\Lambda}$ and coding maps $\{\tau^n:n\in\N^k\}$
together with a family of functions $\{ f_\lambda\}_{\lambda\in \Lambda} \subseteq  L^2(X,\mu)$ satisfying the following conditions:
\begin{itemize}
\item[(a)] For any   $\lambda \in \Lambda$,   we have  $ 0\not=\frac{d(\mu \circ (\tau_\lambda)^{-1})}{d\mu} = |f_\lambda |^2$; 
\item[(b)] For any $\lambda, \nu \in \Lambda$, we have 
$f_\lambda \cdot (f_\nu \circ \tau^{d(\lambda)}) = f_{\lambda \nu}.$
\end{itemize}
\end{defn}

 Thus, a $\Lambda$-projective system on $(X,\mu)$ consists of a $\Lambda$-semibranching function system plus some extra information (encoded in the functions $f_\lambda$).  We have a certain amount of choice for the functions $f_\lambda$; we can take positive or negative (or imaginary!) roots of $\frac{d(\mu \circ (\tau_\lambda)^{-1})}{d\mu} $ for $f_\lambda$, as long as they satisfy the multiplicativity Condition (b) above.

 \begin{example} 
 For any $\Lambda$-semibranching function system on $(X, \mu)$, there is a natural choice of an associated $\Lambda$-projective system; namely, for $\lambda \in \Lambda^{n}$ we define 
 \begin{equation}
 \label{eq:std-f-lambda}
 f_\lambda(x) := \Phi_{\lambda}(\tau^{n}(x))^{-1/2}  \chi_{R_\lambda}(x).\end{equation}
Condition (a) is satisfied because of the hypothesis that the Radon--Nikodym derivatives be strictly positive $\mu$-a.e.~on their domain of definition.
\label{ex:standard-SBFS-is-proj}
Since the operators $S_\lambda \in B(L^2(X, \mu))$ of Theorem \ref{thm:SBFS-repn} are given by 
\[S_\lambda(f) = f_\lambda \cdot (f \circ \tau^n),\]
and \cite[Theorem 3.5]{FGKP} establishes that $\{S_\lambda\}_{\lambda\in \Lambda}$ is a Cuntz--Krieger family, Proposition \ref{prop:lambda-proj-repn} below shows that Equation \eqref{eq:std-f-lambda}  indeed describes a $\Lambda$-projective system. 
\end{example}

  \begin{rmk}
 \label{rmk:lambda-proj-comments-1}
 Observe that Condition (a) of Definition \ref{def:lambda-proj-system}
 forces $f_\lambda (x) = 0 $ a.e.~outside of $R_\lambda$, since $\frac{d(\mu \circ (\tau_\lambda)^{-1})}{d\mu}$ is supported only on $R_\lambda$. 
\end{rmk}

 Condition (b) of Definition~\ref{def:lambda-proj-system} is needed to associate a representation of $C^*(\Lambda)$ to a $\Lambda$-projective system.  To be precise, we have: 
 
  \begin{prop}
  \label{prop:lambda-proj-repn}
  Let $\Lambda$ be a finite, source-free $k$-graph. Suppose that a measure space $(X,\mu)$ admits a $\Lambda$-semibranching function system with prefixing maps $\{\tau_\lambda:\lambda\in\Lambda\}$ and coding maps $\{\tau^n:n\in\N^k\}$.  Suppose that $\{f_\lambda\}_{\lambda\in \Lambda}$ is a collection of functions satisfying Condition (a) 
  of Definition \ref{def:lambda-proj-system}. Then the maps $\{\tau_\lambda\}$, $\{\tau^n\}$ and $\{f_\lambda\}_\lambda$ form a $\Lambda$-projective system on $(X, \mu)$ if and only if the operators $T_\lambda \in B(L^2(X, \mu))$ given by 
  \begin{equation}\label{eq:T-lambda}
  T_\lambda(f) = f_\lambda \cdot (f \circ \tau^{d(\lambda)})
  \end{equation}
 form a Cuntz--Krieger $\Lambda$-family with each $T_\lambda$ nonzero (and hence give a representation of $C^*(\Lambda)$).
  \end{prop}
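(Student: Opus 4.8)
The plan is to prove the biconditional by verifying the Cuntz--Krieger relations (CK1)--(CK4) directly from the defining formula \eqref{eq:T-lambda}, tracking which relations follow from the $\Lambda$-semibranching structure alone and which require the multiplicativity Condition (b). First I would compute the adjoint $T_\lambda^*$: since $T_\lambda(f) = f_\lambda \cdot (f \circ \tau^{d(\lambda)})$ and $\tau^{d(\lambda)}$ inverts $\tau_\lambda$ on $R_\lambda$, a change-of-variables using the Radon--Nikodym derivative $|f_\lambda|^2 = \tfrac{d(\mu\circ(\tau_\lambda)^{-1})}{d\mu}$ should yield $T_\lambda^*(g)(x) = \chi_{D_\lambda}(x)\,\overline{f_\lambda(\tau_\lambda(x))}\,g(\tau_\lambda(x))$, or a similar expression. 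This adjoint computation is the technical heart of the relations involving $T_\lambda^*$, so I would do it carefully once and reuse it throughout.

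Next I would check (CK3), $T_\lambda^* T_\lambda = T_{s(\lambda)}$: composing the two operators and using Condition (a) to cancel $|f_\lambda|^2$ against the Jacobian should produce multiplication by $\chi_{D_\lambda} = \chi_{D_{s(\lambda)}}$, which is exactly $T_{s(\lambda)}$ since $\tau_{s(\lambda)} = \id$ and $f_{s(\lambda)} = \chi_{D_{s(\lambda)}}$-type data. For (CK1), the projections $T_v$ are multiplication operators by $\chi_{D_v}$, and mutual orthogonality follows from the $\Lambda$-semibranching requirement that $\mu(D_v \cap D_w) = 0$ for $v \neq w$ (Remark \ref{rmk:abs-cts-inverse}). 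For (CK4), $\sum_{\lambda \in v\Lambda^n} T_\lambda T_\lambda^* = T_v$, I would compute $T_\lambda T_\lambda^*$ as multiplication by $\chi_{R_\lambda}$ and invoke the partition \eqref{eq-partition}, $D_v = \bigcup_{\lambda \in v\Lambda^n} R_\lambda$ up to measure zero with essentially disjoint pieces, again a consequence of the semibranching axioms. Crucially, none of (CK1), (CK3), (CK4) should require Condition (b); they hold as soon as (a) and the semibranching structure are in place.

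The relation (CK2), $T_\lambda T_\eta = T_{\lambda\eta}$ when $s(\lambda) = r(\eta)$, is where Condition (b) enters and where the biconditional gets its force. Computing $T_\lambda T_\eta(f) = f_\lambda \cdot \big((f_\eta \cdot (f \circ \tau^{d(\eta)})) \circ \tau^{d(\lambda)}\big) = f_\lambda \cdot (f_\eta \circ \tau^{d(\lambda)}) \cdot (f \circ \tau^{d(\eta)}\circ\tau^{d(\lambda)})$, and using the coding-map composition $\tau^{d(\eta)} \circ \tau^{d(\lambda)} = \tau^{d(\lambda\eta)}$ from Definition \ref{def-lambda-SBFS-1}(d), this equals $\big(f_\lambda \cdot (f_\eta \circ \tau^{d(\lambda)})\big) \cdot (f \circ \tau^{d(\lambda\eta)})$. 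This is precisely $T_{\lambda\eta}(f)$ if and only if $f_\lambda \cdot (f_\eta \circ \tau^{d(\lambda)}) = f_{\lambda\eta}$, which is exactly Condition (b). For the forward direction I read off that (b) implies (CK2); for the reverse direction, I would argue that if the $T_\lambda$ form a Cuntz--Krieger family then (CK2) forces the two functions $f_{\lambda\eta}$ and $f_\lambda \cdot (f_\eta \circ \tau^{d(\lambda)})$ to define the same multiplication-type operator, hence agree $\mu$-a.e., recovering (b).

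The main obstacle I anticipate is the reverse implication's need to pass from an operator identity back to an almost-everywhere equality of functions: deducing (b) from (CK2) requires showing that the multiplication operators determined by $f_{\lambda\eta}$ and by $f_\lambda\cdot(f_\eta\circ\tau^{d(\lambda)})$ coincide only if the functions themselves agree a.e., which uses that both are supported on $R_{\lambda\eta}$ (Remark \ref{rmk:lambda-proj-comments-1}) and that the coding map is essentially invertible there. The remaining subtlety is the nonvanishing clause: I would note that each $T_\lambda$ is nonzero precisely because $f_\lambda \neq 0$ on the positive-measure set $R_\lambda$ (guaranteed by Condition (a) together with $\mu(D_v) > 0$), so the ``each $T_\lambda$ nonzero'' hypothesis is automatic and the resulting Cuntz--Krieger family is nondegenerate, yielding the promised representation of $C^*(\Lambda)$ by universality. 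A final bookkeeping point is confirming that $f_v = \chi_{D_v}$ a.e.\ for vertices $v$, so that the vertex operators $T_v$ are genuine projections compatible with the computations above.
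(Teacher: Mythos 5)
Your proof follows essentially the same route as the paper's: compute $T_\lambda^*$ once, verify (CK1)--(CK4) directly, observe that $T_\lambda T_\lambda^* = M_{\chi_{R_\lambda}}$ and invoke the partition \eqref{eq-partition} for (CK4), and isolate (CK2) as the relation equivalent to Condition (b), which is exactly where the paper locates the content of the biconditional. One inaccuracy to fix: your claim that (CK1), (CK3), (CK4) need no input from Condition (b) is not quite right. Condition (a) only pins down $|f_v|^2 = \chi_{D_v}$ for a vertex $v$, so $T_v = M_{f_v}$ could a priori be multiplication by any unimodular function supported on $D_v$ (e.g.\ $-\chi_{D_v}$), which is a partial isometry but not a projection; likewise the identities $T_\lambda^* T_\lambda = T_{s(\lambda)}$ and $\sum_{\lambda \in v\Lambda^n} T_\lambda T_\lambda^* = T_v$ all require knowing $f_v = \chi_{D_v}$ on the nose. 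That fact is precisely the ``final bookkeeping point'' you defer, and it comes from Condition (b) applied with $\lambda = \nu = v$: since $v = v^2$ and $\tau^0 = \id$, Condition (b) gives $f_v = f_v\cdot(f_v\circ\tau^0) = f_v^2$, hence $f_v = \chi_{D_v}$ a.e. This is how the paper handles it, and since Condition (b) is available in the direction where you are proving the Cuntz--Krieger relations, the repair is immediate; the rest of your argument, including the recovery of Condition (b) from (CK2) by testing against a function that is identically $1$ on the relevant range set, is sound.
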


  	\begin{proof} If the operators $T_\lambda$ of Equation \eqref{eq:T-lambda} form a nontrivial Cuntz--Krieger $\Lambda$-family, then it is easily checked that the functions $\{f_\lambda\}_{\lambda\in \Lambda}$ satisfy the hypotheses of Definition \ref{def:lambda-proj-system}.
 {
%
%
  
  On the other hand, suppose that $(X,\mu)$ admits a $\Lambda$-projective system with prefixing maps $\{\tau_\lambda\}_{\lambda \in \Lambda}$, coding maps $\{\tau^n\}_{n\in \N^k}$, and functions $\{f_\lambda\}_{\lambda \in \Lambda}$.  We will show that the operators $\{T_\lambda\}$ of Equation \eqref{eq:T-lambda} satisfy Conditions (CK1)--(CK4). 
  
For (CK1),  observe that if $v \in \Lambda^0$, $T_v(f) = f_v \cdot (f\circ \tau^0)$ is supported on $D_v = R_v$ by Condition (a) of Definition~\ref{def:lambda-proj-system}. Moreover, since  $v= v^2$ for any $v \in \Lambda^0,$ and $\tau_v = id_{D_v} = \tau^0$,  Condition (b) of Definition~\ref{def:lambda-proj-system} implies that 
\[f_v = f_v \cdot (f_v \circ \tau^0) = f_v^2 \Rightarrow f_v = \chi_{D_v}.\]
 Consequently, $T_v(f) = \chi_{D_v} \cdot f$.  Since the sets $\{D_v = R_v\}_{v\in\Lambda^0}$ are disjoint (up to a set of measure zero), it follows that $\{T_v: v\in \Lambda^0\}$ is a set of mutually orthogonal projections; in other words, (CK1) holds. 

For (CK2), fix $\lambda,\nu\in \Lambda$ with $s(\lambda)=r(\nu)$.
 Since $f_\nu(x) = 0$ unless $x \in R_\nu$, we see that 
 \begin{align*}
  T_\lambda T_\nu(f)(x) 
  &= \begin{cases}
  0, & x\not\in \tau_\lambda (R_\nu) \\
  f_\lambda(\tau_\lambda \circ \tau_\nu(y)) \cdot f_\nu(\tau_\nu(y)) \cdot f(y), & x = \tau_\lambda \tau_\nu(y).
  \end{cases}
 \end{align*}
 On the other hand,  Condition (b) of Definition \ref{def:lambda-proj-system} implies that if $x = \tau_\lambda \tau_\nu(y)$,
 \[f_\lambda(\tau_\lambda \circ \tau_\nu(y)) \cdot f_\nu(\tau_\nu(y)) = f_\lambda(x) \cdot f_\nu(\tau^{d(\lambda)}(x)) = f_{\lambda \nu}(y).\]
This implies that $T_\lambda T_\nu = T_{\lambda\nu}$ as claimed.
 
  To check (CK3),
 we first compute that
 $T_\lambda^*f = f \circ \tau_\lambda \cdot \overline{f_\lambda \circ \tau_\lambda} \cdot \Phi_\lambda$.
 Alternatively, 
 \begin{equation}\label{eq:T-adj}
 T_\lambda^* f= \frac{\chi_{D_\lambda} \cdot (f \circ \tau_\lambda )}{f_\lambda \circ \tau_\lambda}.
 \end{equation}
Condition (CK3), and the fact that the operators $T_\lambda$ are partial isometries, now follow from straightforward calculations. 
Finally, an easy computation establishes that $T_\lambda T_\lambda^* (f) = \chi_{R_\lambda} \cdot f$ for any $\lambda \in \Lambda$, from which (CK4) follows.
}
 \end{proof}
 
 	We call the representation given in Equation \eqref{eq:T-lambda} a $\Lambda$-projective representation.

The following Proposition enables us to translate a $\Lambda$-projective system on $(X, \mu)$ to a $\Lambda$-projective system on $(X, \mu')$ for any measure $\mu'$ which is equivalent to $\mu$.
\begin{prop}
 \label{prop:lambda-proj-repn-un-equiv} Let $\Lambda$ be a finite $k$-graph with no sources. Suppose we are given a $\Lambda$-projective system  $\{\tau_{\lambda}:\lambda\in \Lambda\}$, $\{\tau^n: n \in \N^k\}$ and $\{ f_\lambda : \lambda \in \Lambda\}$ on a measure space $(X,\mu)$. Let $\mu'$ be a measure equivalent to $\mu$, and set $g_1(x)\;=\;\frac{d\mu'}{d\mu}(x)$ and $g_2(x)=\frac{d\mu}{d\mu'}(x)$.  If we define  $\{ \tilde{f}_\lambda \}_{\lambda \in \Lambda}$ by 
\begin{equation}\tilde{f}_\lambda(x)\;=\;\frac{\sqrt{g_1\circ \tau^{d(\lambda)}(x)}}{\sqrt{g_1(x)}}\cdot f_{\lambda}(x),\;\lambda\in \Lambda,
\label{eq-def-of-f-tilde}
\end{equation}
then  $\{\tau_{\lambda}:\lambda\in \Lambda\}$, $\{\tau^n: n\in \N^k\}$ and $\{ \tilde{f}_\lambda \}_{\lambda \in \Lambda}$ give a  $\Lambda$-projective system on $(X,\mu')$. 
Moreover, the associated representations $\{T_{\lambda}:\lambda\in \Lambda\}$ and $\{\tilde{T}_{\lambda}:\lambda\in \Lambda\}$ of $C^*(\Lambda)$ on $L^2(X,\mu)$ and $L^2(X,\mu')$  given by Equation  \eqref{eq:T-lambda} of Proposition \ref{prop:lambda-proj-repn} are unitarily equivalent via the unitary $U$ given by 
\begin{equation*}
U(f)(x)\;=\;\sqrt{\frac{d\mu}{d\mu'}(x)}\cdot f(x),\; f\in\; L^2(X,\mu),  \ U^{-1}(h)(x)\;=\;\sqrt{\frac{d\mu'}{d\mu}(x)}\cdot h(x),\; h\in\; L^2(X,\mu').
\end{equation*} 
\end{prop}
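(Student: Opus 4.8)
The plan is to verify the two claims in turn. First I would check that the modified functions $\{\tilde f_\lambda\}$ satisfy Conditions (a) and (b) of Definition~\ref{def:lambda-proj-system} with respect to the new measure $\mu'$, keeping the same prefixing and coding maps. The key observation is that the underlying $\Lambda$-semibranching function system is unchanged: since $\mu'$ is equivalent to $\mu$, the sets $D_\lambda, R_\lambda$ and the maps $\tau_\lambda, \tau^n$ remain a valid $\Lambda$-semibranching function system on $(X,\mu')$, because equivalence preserves null sets and hence the semibranching axioms (a)-(d) of Definition~\ref{def-lambda-SBFS-1}. For Condition (a), I would compute the new Radon--Nikodym derivative $\frac{d(\mu'\circ(\tau_\lambda)^{-1})}{d\mu'}$ via the chain rule for Radon--Nikodym derivatives, using $g_1 = d\mu'/d\mu$ and the fact that on $R_\lambda$ one has $(\tau_\lambda)^{-1}=\tau^{d(\lambda)}$ (Remark~\ref{rmk:lambda-sbfs-properties}(2)); this should yield exactly $\left|\tilde f_\lambda\right|^2 = \frac{g_1\circ\tau^{d(\lambda)}}{g_1}\,|f_\lambda|^2$, matching Equation~\eqref{eq-def-of-f-tilde}.

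For Condition (b), I would substitute the definition \eqref{eq-def-of-f-tilde} into $\tilde f_\lambda\cdot(\tilde f_\nu\circ\tau^{d(\lambda)})$ and check it equals $\tilde f_{\lambda\nu}$. The main computation is that the correction factors telescope: the factor $\sqrt{g_1\circ\tau^{d(\lambda)}}/\sqrt{g_1}$ from $\tilde f_\lambda$ combines with $\sqrt{g_1\circ\tau^{d(\nu)}\circ\tau^{d(\lambda)}}/\sqrt{g_1\circ\tau^{d(\lambda)}}$ coming from $(\tilde f_\nu\circ\tau^{d(\lambda)})$, and using the cocycle identity $\tau^{d(\nu)}\circ\tau^{d(\lambda)}=\tau^{d(\lambda)+d(\nu)}=\tau^{d(\lambda\nu)}$ (Definition~\ref{def-lambda-SBFS-1}(d)) the product collapses to $\sqrt{g_1\circ\tau^{d(\lambda\nu)}}/\sqrt{g_1}$; the remaining factor $f_\lambda\cdot(f_\nu\circ\tau^{d(\lambda)})$ equals $f_{\lambda\nu}$ by Condition~(b) for the original system, so the whole expression is $\tilde f_{\lambda\nu}$.

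For the unitary equivalence, I would first note that $U(f)=\sqrt{g_2}\,f$ is an isometry from $L^2(X,\mu')$ onto $L^2(X,\mu)$: this is the standard fact that multiplication by $\sqrt{d\mu/d\mu'}$ implements a unitary between $L^2$ spaces of equivalent measures, and its inverse is multiplication by $\sqrt{g_1}=\sqrt{d\mu'/d\mu}$. (I would double-check the direction conventions in the statement, since the displayed $U$ maps $L^2(X,\mu)\to L^2(X,\mu')$; the point is simply that multiplication by the appropriate square root of the Radon--Nikodym derivative is the required isometric isomorphism, and I will orient it so that it intertwines $\{T_\lambda\}$ with $\{\tilde T_\lambda\}$.) Then the core identity to establish is the intertwining $U T_\lambda = \tilde T_\lambda U$, which reduces to a pointwise algebraic identity: applying both sides to a function and expanding $T_\lambda, \tilde T_\lambda$ via \eqref{eq:T-lambda}, the ratio $\tilde f_\lambda/f_\lambda = \sqrt{g_1\circ\tau^{d(\lambda)}}/\sqrt{g_1}$ is exactly what is needed to absorb the discrepancy between the $U$-factor $\sqrt{g_2}$ evaluated at $x$ and the $U$-factor $\sqrt{g_2\circ\tau^{d(\lambda)}}$ produced by the composition with $\tau^{d(\lambda)}$, using $g_2=1/g_1$.

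The main obstacle I anticipate is purely bookkeeping: correctly tracking where each Radon--Nikodym factor is evaluated (at $x$ versus at $\tau^{d(\lambda)}(x)$) and ensuring the square roots combine consistently, especially keeping the direction of $U$ versus $U^{-1}$ straight so that the intertwining comes out in the claimed direction. Once the evaluation points are pinned down, the cocycle property of the coding maps and the multiplicativity of Radon--Nikodym derivatives make every step routine; there is no genuine analytic difficulty beyond verifying that the relevant functions lie in $L^2$ and that the manipulated expressions are defined $\mu$-a.e., which follows from the positivity in Condition~(a) and the equivalence of the two measures.
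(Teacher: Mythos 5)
Your proposal is correct: the paper itself leaves this verification to the reader, and your outline is exactly the intended routine check — the Radon--Nikodym chain rule gives Condition (a), the correction factors telescope for Condition (b), and the factor $\sqrt{g_2\circ\tau^{d(\lambda)}}/\sqrt{g_2}=\sqrt{g_1}/\sqrt{g_1\circ\tau^{d(\lambda)}}$ cancels against $\tilde f_\lambda/f_\lambda$ in the intertwining computation. Your caution about the direction of $U$ is warranted (the displayed $U$ multiplies by $\sqrt{d\mu/d\mu'}$ and maps $L^2(X,\mu)\to L^2(X,\mu')$, not the reverse as you first wrote), but since you flag and resolve this, the argument goes through.
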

\begin{proof} We leave the verification of this proposition to the reader.\end{proof}

Proposition \ref{prop-2.11} below is the analog of Proposition 2.11 of \cite{dutkay-jorgensen-monic} for $\Lambda$-projective systems. 
\begin{prop}
\label{prop-2.11} Let $\Lambda$ be a finite $k$-graph with no sources. Suppose we are given two $\Lambda$-projective systems on $X$, with the same prefixing and coding maps $\{\tau_{\lambda}:\lambda\in \Lambda\}$, $\{\tau^n: n \in \N^k\}$, but with different measures $\mu, \mu'$ and $\Lambda$-projective functions $\{ f_\lambda \}_{\lambda \in \Lambda}$ for $(X, \mu)$ and $\{ f_\lambda' \}_{\lambda \in \Lambda}$ for $(X,\mu')$.

Let $d\mu' = h^2 d\mu + d\nu 
$
be the Lebesgue-Radon-Nikodym decomposition, with $h \geq 0$ and $\nu$
singular with respect to $\mu$. Then there is a partition of $X$ into Borel sets $
X = A \cup B$
such that:
\begin{enumerate}
\item[(a)]  The function $h$ is supported on $A$, $\nu$ is supported on $B$, and $\mu(B) = 0$, $\nu(A) = 0$.
\item[(b)] The sets $A$, $B$ are invariant under $\tau^n$ for all $n \in \N^k$, i.e., 
\[
(\tau^n)^{-1}(A) = A,\quad\text{and}\quad (\tau^n)^{-1}(B) = B.
\]
\item[(c)]  We have $\nu \circ  \tau_\lambda^{-1} << \nu$ and $k_\lambda := \sqrt{\frac{d(\nu \circ \tau_\lambda^{-1} )}{d\nu}}$ is supported on $B$.
\item[(d)] $| f_\lambda' | \cdot h =| f_\lambda|\cdot (h \circ \tau^{d(\lambda)} )\, {\mu\text{-a.e.~on $A$ and } |f_\lambda'| = k_\lambda\,  \nu\text{-a.e.~on } B.}$ 
\end{enumerate}
\end{prop}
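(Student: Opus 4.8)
The plan is to extract everything from a single identity: the two ways of computing the push-forward $\mu'\circ\tau_\lambda^{-1}$, combined with uniqueness of the Lebesgue decomposition. First I would invoke the Lebesgue--Radon--Nikodym theorem to write $\mu'=h^2\mu+\nu$ with $h\ge 0$ and $\nu\perp\mu$, and fix a preliminary Borel partition $X=A_0\cup B_0$ with $\mu(B_0)=0$, $\nu(A_0)=0$ carrying $h$ and $\nu$ respectively (redefining $h$ to vanish on $B_0$, which changes nothing since $\mu(B_0)=0$). I would also record the elementary fact, coming from $\mu\circ(\tau^n)^{-1}\ll\mu$ in Remark \ref{rmk:lambda-sbfs-properties}, that $\mu(\tau_\lambda(N))=0$ for every $\mu$-null set $N$; in particular $\tau_\lambda(B_0)$ is $\mu$-null. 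This already gives a first pass at (a).

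Next I would establish the change-of-variables identity $(\phi\mu)\circ\tau_\lambda^{-1}=\chi_{R_\lambda}(\phi\circ\tau^{d(\lambda)})\,|f_\lambda|^2\,\mu$ for $\phi\ge 0$, using that $\tau^{d(\lambda)}$ inverts $\tau_\lambda$ on $R_\lambda$ together with Condition (a) of the $\mu$-projective system in Definition \ref{def:lambda-proj-system}. Applying it with $\phi=h^2$ and splitting $\mu'=h^2\mu+\nu$, while using Condition (a) of the $\mu'$-system on the right, yields
\[
(h\circ\tau^{d(\lambda)})^2\,|f_\lambda|^2\,\mu \;+\; \nu\circ\tau_\lambda^{-1} \;=\; |f_\lambda'|^2\,h^2\,\mu \;+\; |f_\lambda'|^2\,\nu.
\]
On each side the first summand is absolutely continuous with respect to $\mu$ and the second is singular (the latter because $\nu\circ\tau_\lambda^{-1}$ is supported on $\tau_\lambda(B_0)$, a $\mu$-null set). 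Uniqueness of the Lebesgue decomposition then forces the absolutely continuous parts and the singular parts to agree separately: taking square roots in the a.c.\ identity gives $(h\circ\tau^{d(\lambda)})\,|f_\lambda|=|f_\lambda'|\,h$ $\mu$-a.e., the first half of (d), while matching singular parts gives $\nu\circ\tau_\lambda^{-1}=|f_\lambda'|^2\,\nu$, which is exactly (c) together with $k_\lambda=|f_\lambda'|$ $\nu$-a.e., the second half of (d).

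It remains to promote $A_0,B_0$ to genuinely $\tau^n$-invariant sets, which is (b) and the crux of the argument. Since $|f_\lambda|>0$ exactly on $R_\lambda$ and, on the a.c.\ region, $|f_\lambda'|>0$ there as well (because $\mu\sim\mu'$ on $A_0$), the identity $(h\circ\tau^{d(\lambda)})|f_\lambda|=|f_\lambda'|h$ shows that for $\mu$-a.e.\ $x\in R_\lambda$ one has $h(\tau^{d(\lambda)}(x))>0$ if and only if $h(x)>0$; summing over $\lambda\in\Lambda^{e_i}$, whose ranges tile $X$ mod $\mu$, yields $\chi_{A_0}\circ\tau^{e_i}=\chi_{A_0}$ $\mu$-a.e., i.e.\ $\mu$-a.e.\ invariance of $A_0$, and the parallel $\nu$-a.e.\ invariance of $B_0$ should follow from $\nu\circ\tau_\lambda^{-1}=|f_\lambda'|^2\nu$ with $|f_\lambda'|>0$ on $R_\lambda$. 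The main obstacle is upgrading these two a.e.\ statements to the exact equalities $(\tau^n)^{-1}(A)=A$ and $(\tau^n)^{-1}(B)=B$ simultaneously: one must saturate under the countable commuting semigroup $\{\tau^n:n\in\N^k\}$ (it suffices to treat the generators $\tau^{e_i}$, since exact invariance is closed under composition) and discard a single exceptional set that is null for the relevant measure, all while keeping $A$ and $B$ a Borel partition. An added subtlety is that the tiling $X=\bigcup_{\lambda\in\Lambda^n}R_\lambda$ holds only modulo $\mu$, whereas $\nu\perp\mu$; so the invariance of $B$ must be argued intrinsically from the singular relation rather than by restricting the $\mu$-partition, and the $\mu$-null adjustment on $A$ and the $\nu$-null adjustment on $B$ must be reconciled. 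Once invariant $A,B$ are in hand, the identities proved above transfer to them verbatim, since they were established modulo null sets absorbed into the saturation, completing (a), (c), and (d).
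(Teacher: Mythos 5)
Your derivation of (c) and (d) is correct and in fact cleaner than the paper's: pushing forward $\mu' = h^2\mu + \nu$ under $\tau_\lambda$, computing $(h^2\mu)\circ\tau_\lambda^{-1} = \chi_{R_\lambda}(h^2\circ\tau^{d(\lambda)})|f_\lambda|^2\,\mu$, equating with $\mu'\circ\tau_\lambda^{-1} = |f_\lambda'|^2 h^2\mu + |f_\lambda'|^2\nu$, and invoking uniqueness of the Lebesgue decomposition (legitimate here, since $\mu'\circ\tau_\lambda^{-1}=|f_\lambda'|^2\mu'$ is finite because $f_\lambda'\in L^2(X,\mu')$) yields both halves of (d) and the identity $\nu\circ\tau_\lambda^{-1}=|f_\lambda'|^2\nu$, which is stronger than the paper's absolute-continuity statement in (c). The paper obtains the first half of (d) by the same change of variables, just written as a chain of integral identities against test functions supported on $A$.

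The genuine gap is part (b), which you yourself flag as unresolved, and your sketched route to it would not close. First, the biconditional ``$h(\tau^{d(\lambda)}(x))>0$ iff $h(x)>0$ for $\mu$-a.e.\ $x\in R_\lambda$'' requires $|f_\lambda'|>0$ $\mu$-a.e.\ on $R_\lambda$; but $|f_\lambda'|$ is a Radon--Nikodym derivative with respect to $\mu'$, so its nonvanishing is only guaranteed $\mu'$-a.e., and on the set where $h=0$ (which may have positive $\mu$-measure inside $A_0$) you get only the forward implication. So even the $\mu$-a.e.\ invariance of $\{h>0\}$ is not established, let alone the exact invariance the statement demands. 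The resolution is to abandon the functional identities for this step entirely: the proposition only requires \emph{some} invariant Borel partition with $\mu(B)=0$, $\nu(A)=0$, and $h$ can be modified on a $\mu$-null set to be supported on $A$. The paper simply takes $\tilde B=\operatorname{supp}\nu$ (so $\mu(\tilde B)=0$) and lets $B$ be the orbit of $\tilde B$ under the countable family $\{\tau^n\}_{n\in\N^k}\cup\{\tau_\lambda\}_{\lambda\in\Lambda}$; since images and preimages of $\mu$-null sets under these maps are $\mu$-null (by $\mu\circ(\tau^n)^{-1}\ll\mu$, the identity $(\tau^n)^{-1}(E)=\bigcup_{\lambda\in\Lambda^n}\tau_\lambda(E)$, and Condition (a) of Definition \ref{def:lambda-proj-system}), $B$ is a countable union of $\mu$-null sets, hence $\mu$-null, and it is exactly invariant by construction; then $A:=X\setminus B$. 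This sidesteps all of the saturation and reconciliation issues you list, and your (c)/(d) identities then hold verbatim relative to this partition.
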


\begin{proof} We start by proving (a) and (b) together. 
 Let $\tilde{B}$ be the support of $\nu$, and observe that $\mu (\tilde{B}) = 0$. We observe that the definitions of $\Lambda$-semibranching function systems and $\Lambda$-projective systems, together with the fact that $(\tau^n)^{-1}(\tilde B) = \bigcup_{\lambda \in \Lambda^n} \tau_\lambda(\tilde B)$, imply that
\[
(\tau^n)^{-1}(\tilde{B})\text{ and } (\tau_\lambda)^{-1}(\tilde{B}) 
\] 
have $\mu$-measure zero. Therefore
we can take the orbit $B$ of $\tilde{B}$ under the functions $\{\tau^n: n \in \N^k\}$ and $\{\tau_\lambda: \lambda \in \Lambda\},$ and $B$ will then have $\mu$-measure zero. Let
$A := X \backslash B$. Then $A$ contains the support of $\mu$, and we can choose $h$ to be supported on $A$.  Moreover, 
$\nu(A) = 0$. By construction, $A$ and $B$ are invariant under $\tau^n$. This establishes (a) and (b).
To prove (c), let $E$ be a Borel set with $\nu (E) = 0$. Then $\nu (E\cap B) = 0$, so the fact that $\mu$ vanishes on $B$ implies that $\mu' (E\cap B) = 0$. 
We consequently have $\mu'(\tau_\lambda^{-1} (E \cap B)) = 0,$ which means that  $\mu'(\tau_\lambda^{-1} (E) \cap  B) = 0$, so $\nu(\tau_\lambda^{-1} (E)) = 0$.
Since $B$ is invariant under $\tau_\lambda^{-1}$ and $\nu$ and $\nu \circ \tau_\lambda^{-1}$  are supported on $B$, it follows that 
$k_\lambda$ is supported on $B$. 
To see (d), let $f$ be a bounded Borel function supported on $A$. Then we have
\[
\begin{split}
\int_A |f'_\lambda|^2\, f\, h^2\, d\mu &=\int_A |f'_\lambda|^2\, f\,  d\mu'= \int_A f \, \frac{d(\mu' \circ \tau_\lambda^{-1})}{d(\mu')}d\mu'  =\int_A (f\circ \tau_\lambda)\,  d\mu' \\
&= \int_A (f\circ \tau_\lambda)\, h^2\, d\mu = \int_X (f\circ \tau_\lambda)\, (h^2 \circ \tau^{d(\lambda)}\circ \tau_\lambda)\, d\mu= \int_X f\, (h^2 \circ \tau^{d(\lambda)})\, d(\mu\circ \tau_\lambda^{-1})\\
&= \int_X f\, (h^2 \circ \tau^{d(\lambda)})\, | f_\lambda|^2 d\mu,\\
\end{split}
\]
which implies the first relation. The second relation follows from the fact that
$ \mu'|_B = \nu$.
\end{proof}

  \subsection{Projection valued measures}
\label{sec:Proj-valued-measures}
The second technical tool which underpins our analysis of the monic representations of $C^*(\Lambda)$ is  the   projection valued measure associated to a  representation of $C^*(\Lambda)$. 
Our work in this section is inspired by Dutkay, Haussermann, and Jorgensen     \cite{dutkay-jorgensen-monic, dutkay-jorgensen-atomic}. 

\begin{defn} 
	\label{def-proj-val-measu}
Let $\Lambda$ be a finite $k$-graph with no sources.
Given a representation $\{ t_\lambda\}_{\lambda\in\Lambda }$ of a $k$-graph $C^*$-algebra $C^*(\Lambda)$ on a Hilbert space $\mathcal{H}$, we define a projection valued function $P$ on $\Lambda^\infty$ by 
 \[P(Z(\lambda)) = t_\lambda t_\lambda^* \quad\text{for all $\lambda \in\Lambda$}.
 \]
 \end{defn}
 


In the proof (Proposition \ref{conj-palle-proj-valued-measure-gen-case})
that $P$ indeed defines a projection-valued measure on $\Lambda^\infty$, we rely on the following well-known Lemma. Thus, in our application, $X = \Lambda^\infty$ and 
$ \mathcal F_n$ will be the $\sigma$-algebra generated by the cylinder sets $Z(\lambda)$ with $d(\lambda) = (n, \ldots, n)$.

\begin{lemma}[Kolmogorov Extension Theorem, \cite{kolmogorov, Tum}]
\label{lem-Kolm}
Let $(X, \mathcal{F}_n, \nu_n)_{n\in\N}$ be a sequence of probability measures $(\nu_n)_{n\in \N}$ on the same space $X$, each associated with a $\sigma$-algebra $\mathcal F_n$
; further assume that  $(X, \mathcal{F}_n, \nu_n)_{n\in\N}$  form a projective system, i.e., an inverse limit. Suppose that Kolmogorov's consistency condition holds:
\[ \nu_{n+1}|_{\mathcal F_n} = \nu_n.\]
Then there is a unique extension $\nu$ of the measures $(\nu_n)_{n\in \N}$ to the $\sigma$-algebra $\bigvee_{n\in \N} \mathcal F_n$ generated by $\bigcup_{n\in \N} \mathcal F_n$.
\label{lem:kolmogorov}
\end{lemma}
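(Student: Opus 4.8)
The plan is to reduce the statement to the Carath\'eodory extension theorem. First I would observe that, since the consistency condition $\nu_{n+1}|_{\mathcal F_n} = \nu_n$ only makes sense when $\mathcal F_n \subseteq \mathcal F_{n+1}$, the increasing union $\mathcal A := \bigcup_{n\in\N}\mathcal F_n$ is an \emph{algebra} of subsets of $X$: it is closed under complements and finite unions, because any finite collection of its members already lies in a common $\mathcal F_N$. Moreover $\sigma(\mathcal A) = \bigvee_{n\in\N}\mathcal F_n$. On this algebra I would define a set function $\nu_0$ by $\nu_0(E) = \nu_n(E)$ whenever $E \in \mathcal F_n$; the consistency condition guarantees this is independent of the chosen $n$, so $\nu_0$ is well defined, and it is finitely additive with $\nu_0(X) = 1$ because each $\nu_n$ is a probability measure.

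The theorem then follows once I show that $\nu_0$ is a genuine premeasure, i.e.\ that it is countably additive on $\mathcal A$. Granting this, Carath\'eodory's extension theorem produces a measure $\nu$ on $\sigma(\mathcal A) = \bigvee_{n\in\N}\mathcal F_n$ restricting to $\nu_0$; and since $\nu_0(X) = 1 < \infty$ and $\mathcal A$ is an algebra generating $\sigma(\mathcal A)$, the uniqueness clause of that theorem (equivalently the $\pi$--$\lambda$ theorem of Dynkin) forces $\nu$ to be the unique such extension. Because $\nu_0$ is already finitely additive and finite, the required countable additivity is equivalent to continuity from above at the empty set: it suffices to prove that whenever $E_j \in \mathcal A$ with $E_j \downarrow \emptyset$, one has $\nu_0(E_j) \to 0$.

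Verifying this continuity is the main obstacle, and it is precisely the point at which the projective (inverse-limit) structure of the $(X,\mathcal F_n,\nu_n)$ must be exploited, since for a purely abstract finitely additive set function the conclusion can fail. I would argue by contradiction using compactness. Suppose $E_j \downarrow \emptyset$ yet $\nu_0(E_j) \geq \varepsilon > 0$ for all $j$. In the intended application $X = \Lambda^\infty$ is compact and each $\mathcal F_n$ is generated by the cylinder sets $Z(\lambda)$ with $d(\lambda) = (n,\dots,n)$, which are \emph{compact and open}; hence every $E_j \in \mathcal A$ is a finite disjoint union of such cylinders and is therefore itself compact. The positivity $\nu_0(E_j)\geq\varepsilon$ shows each $E_j$ is nonempty, so the finite intersection property gives $\bigcap_j E_j \neq \emptyset$, contradicting $E_j \downarrow \emptyset$. (In the general abstract setting one would first replace each $E_j$ by a compact inner-approximating subset of nearly full $\nu_0$-measure, but here that step is unnecessary because the generating cylinders are already clopen and compact.) This contradiction forces $\nu_0(E_j) \to 0$, establishing continuity at $\emptyset$ and completing the proof.
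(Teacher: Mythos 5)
The paper offers no proof of this lemma at all: it is stated as a classical result and supported only by the citations to Kolmogorov and Tumulka, so there is no internal argument to measure yours against. Your proposal is a correct, essentially self-contained proof of the version of the lemma that the paper actually uses. The reduction to Carath\'eodory is sound: since the consistency condition forces $\mathcal F_n \subseteq \mathcal F_{n+1}$, the union $\bigcup_n \mathcal F_n$ is an algebra, the set function $\nu_0$ is well defined and finitely additive, and for a finite finitely additive set function on an algebra countable additivity is equivalent to continuity from above at $\emptyset$; uniqueness then follows from the $\pi$--$\lambda$ theorem because the extension is finite and the algebra generates $\bigvee_n \mathcal F_n$. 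You are also right to flag that continuity at $\emptyset$ is exactly where the vague hypothesis ``projective system'' must do real work --- the abstract statement as printed is not a theorem without some compactness or inner-regularity assumption --- and your finite-intersection-property argument closes that gap precisely in the setting where the paper invokes the lemma in Proposition \ref{conj-palle-proj-valued-measure-gen-case}: there $X = \Lambda^\infty$ is compact because $\Lambda$ is finite, each $\mathcal F_n$ is generated by the finite clopen partition $\{Z(\lambda) : d(\lambda) = (n,\dots,n)\}$, so every element of the algebra is compact and a decreasing sequence of nonempty compact sets cannot have empty intersection. What your approach buys is a transparent, elementary verification tailored to the finite $k$-graph case; what the paper's citation buys is the greater generality of Tumulka's formulation. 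The one refinement I would ask for is to state explicitly that you are proving the lemma for the concrete system to which it is applied, rather than the abstract statement as worded, since the latter would require you to postulate the compact-approximation property you mention parenthetically.
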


 
 \begin{prop} 
 \label{conj-palle-proj-valued-measure-gen-case}
 Let $\Lambda$ be a finite $k$-graph with no sources.
 Given a representation $\{ t_\lambda\}_{\lambda\in\Lambda}$ of a $k$-graph $C^*$-algebra $C^*(\Lambda)$ on a Hilbert space $\mathcal{H}$,  the function $P$ of Definition \ref{def-proj-val-measu}
 extends to  a projection valued measure on the Borel $\sigma$-algebra  $\mathcal{B}_o(\Lambda^\infty)$ of the infinite path space $\Lambda^\infty$. 
 \end{prop}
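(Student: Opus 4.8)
The plan is to reduce the construction of $P$ to a family of scalar measures, one for each vector of $\mathcal H$, produced by the Kolmogorov Extension Theorem (Lemma \ref{lem:kolmogorov}), and then to reassemble these into operators by polarization and the Riesz representation of bounded sesquilinear forms. To set up the filtration promised in the remark preceding the statement: since $\Lambda$ is finite and source-free, for each $n \in \N$ the cylinders $\{Z(\lambda) : d(\lambda) = (n,\dots,n)\}$ form a finite partition of $\Lambda^\infty$, and I let $\mathcal F_n$ be the (finite) $\sigma$-algebra they generate. The factorization property gives the refinement $Z(\lambda) = \bigsqcup_{\mu \in s(\lambda)\Lambda^{(1,\dots,1)}} Z(\lambda\mu)$, so $\mathcal F_n \subseteq \mathcal F_{n+1}$; and since every cylinder $Z(\eta)$ lies in $\mathcal F_n$ once $(n,\dots,n) \geq d(\eta)$, while the cylinders form a countable basis for the compact metrizable space $\Lambda^\infty$, I obtain $\bigvee_n \mathcal F_n = \mathcal B_o(\Lambda^\infty)$.

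Next, fixing a unit vector $\xi \in \mathcal H$, I would define a probability measure $\nu_n$ on the atoms of $\mathcal F_n$ by $\nu_n(Z(\lambda)) = \langle t_\lambda t_\lambda^* \xi, \xi\rangle = \|t_\lambda^*\xi\|^2$ for $d(\lambda) = (n,\dots,n)$. Its total mass is $\langle \sum_{v \in \Lambda^0} t_v\, \xi, \xi\rangle = \|\xi\|^2 = 1$ by (CK4), assuming as usual that the representation is nondegenerate so that $\sum_{v\in\Lambda^0} t_v = I$. Kolmogorov's consistency condition $\nu_{n+1}|_{\mathcal F_n} = \nu_n$ is exactly the content of the Cuntz--Krieger relations: using (CK2) and (CK4), $\sum_{\mu \in s(\lambda)\Lambda^{(1,\dots,1)}} t_{\lambda\mu} t_{\lambda\mu}^* = t_\lambda\bigl(\sum_{\mu \in s(\lambda)\Lambda^{(1,\dots,1)}} t_\mu t_\mu^*\bigr) t_\lambda^* = t_\lambda t_\lambda^*$, so summing $\nu_{n+1}$ over the children of $Z(\lambda)$ recovers $\nu_n(Z(\lambda))$. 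Lemma \ref{lem:kolmogorov} then yields a unique Borel probability measure $\nu_\xi$ on $\Lambda^\infty$ extending the $\nu_n$, with $\nu_\xi(Z(\lambda)) = \langle t_\lambda t_\lambda^* \xi, \xi\rangle$; rescaling produces $\nu_\xi$ for every $\xi$, of total mass $\|\xi\|^2$.

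I would then polarize: for $\xi, \eta \in \mathcal H$ set $\nu_{\xi,\eta} = \tfrac14\sum_{j=0}^{3} i^{j}\, \nu_{\xi + i^{j}\eta}$, a complex Borel measure with $\nu_{\xi,\eta}(Z(\lambda)) = \langle t_\lambda t_\lambda^* \xi, \eta\rangle$. For a fixed Borel set $E$, the map $(\xi,\eta) \mapsto \nu_{\xi,\eta}(E)$ is a bounded sesquilinear form, boundedness coming from $0 \le \nu_\xi(E) \le \|\xi\|^2$, so the Riesz representation theorem supplies a bounded operator $P(E)$ with $\langle P(E)\xi, \eta\rangle = \nu_{\xi,\eta}(E)$. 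In particular $\langle P(E)\xi,\xi\rangle = \nu_\xi(E) \geq 0$, so each $P(E)$ is positive and self-adjoint; $P(\Lambda^\infty) = I$ and $P(\emptyset)=0$ are immediate; and weak (hence strong) countable additivity of $E \mapsto P(E)$ is inherited from the countable additivity of the scalar measures $\nu_{\xi,\eta}$.

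It remains to prove that each $P(E)$ is a projection, which I expect to be the main obstacle. The crux is the multiplicativity identity $P(E \cap F) = P(E) P(F)$. On cylinders it is a direct computation from \eqref{eq:CK4-2}: since $Z(\lambda) \cap Z(\eta) = \bigsqcup_{(\alpha,\beta) \in \Lambda^{\operatorname{min}}(\lambda,\eta)} Z(\lambda\alpha)$ and $t_\lambda^* t_\eta = \sum_{(\alpha,\beta)} t_\alpha t_\beta^*$, one finds $t_\lambda t_\lambda^*\, t_\eta t_\eta^* = \sum_{(\alpha,\beta)\in\Lambda^{\operatorname{min}}(\lambda,\eta)} t_{\lambda\alpha} t_{\lambda\alpha}^* = P\bigl(Z(\lambda)\cap Z(\eta)\bigr)$, using $\lambda\alpha = \eta\beta$. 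Because intersections of cylinders are finite disjoint unions of cylinders, this extends by additivity to the algebra $\mathcal A$ generated by the cylinders. To push multiplicativity from $\mathcal A$ to all of $\mathcal B_o(\Lambda^\infty)$ I would run a monotone-class (Dynkin $\pi$--$\lambda$) argument in two stages: first fix $Z \in \mathcal A$ and verify that $\{E : P(E \cap Z) = P(E) P(Z)\}$ is a Dynkin system containing the $\pi$-system $\mathcal A$, hence equals $\mathcal B_o(\Lambda^\infty)$; then fix an arbitrary Borel $E$ and repeat in the second variable. Combined with self-adjointness, the special case $P(E)^2 = P(E \cap E) = P(E)$ shows that every $P(E)$ is a projection, so $P$ is a projection-valued measure on $\mathcal B_o(\Lambda^\infty)$.
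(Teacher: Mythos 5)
Your proof is correct, but it follows a genuinely different route from the paper's. The paper applies the Kolmogorov extension theorem (Lemma \ref{lem:kolmogorov}) directly to the operator-valued set function $P$ on the filtration $\mathcal F_n$ generated by the cylinders of degree $(n,\dots,n)$, verifies the consistency condition $P(Z(\lambda)) = \sum_{\eta \in s(\lambda)\Lambda^{(1,\dots,1)}} P(Z(\lambda\eta))$ via (CK4), and stops there --- implicitly leaning on the projection/POVM version of the extension theorem (the cited reference of Tumulka) and never addressing why the extended measure takes values in \emph{projections} rather than merely positive operators. You instead disassemble $P$ into the scalar measures $\nu_\xi(Z(\lambda)) = \|t_\lambda^*\xi\|^2$, extend each by the scalar Kolmogorov theorem, and reassemble by polarization and the Riesz representation of bounded sesquilinear forms; you then prove the multiplicativity $P(E\cap F) = P(E)P(F)$ from the relation \eqref{eq:CK4-2} on cylinders (where $Z(\lambda)\cap Z(\eta)$ decomposes along $\Lambda^{\operatorname{min}}(\lambda,\eta)$) and push it to all Borel sets by a two-variable Dynkin argument, which yields $P(E)^2 = P(E)$. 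Your version is longer but more self-contained: it needs only the scalar extension theorem, and it supplies the verification that the extension is projection-valued, a point the paper's proof leaves entirely implicit. Two small matters worth noting in your write-up: the sesquilinearity of $(\xi,\eta)\mapsto\nu_{\xi,\eta}(E)$ for a general Borel set $E$ rests on the parallelogram law $\nu_{\xi+\eta}+\nu_{\xi-\eta}=2\nu_\xi+2\nu_\eta$, which holds on the generating algebra because there the quadratic form comes from an operator, and then for all Borel sets by uniqueness of the extension --- this deserves a sentence; and the parenthetical ``weak hence strong'' countable additivity is only justified once you know the $P(E)$ are projections, so it should logically come after the multiplicativity step rather than before.
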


 \begin{proof}
 {Recall from the proof of \cite[Lemma 4.1]{FGKP}  that 
 \[\{Z(\lambda): d(\lambda) = (n, n, \ldots, n) \text{ for some } n \in \N\}\]
  generates the topology on $\Lambda^\infty$.  Thus, 
  $\mathcal B_o(\Lambda^\infty) = \varinjlim \mathcal F_n.$  By Lemma \ref{lem-Kolm}, it therefore suffices to show that 
 \[ P(Z(\lambda)) = \sum_{\eta \in s(\lambda) \Lambda^{(1,\ldots, 1)}} P(Z(\lambda \eta))\]
 whenever $d(\lambda) = (n, \ldots, n)$ for some $n \in \N$.  However, this follows immediately from (CK4):
 \begin{equation} P(Z(\lambda)) = t_\lambda t_\lambda^* = t_\lambda\left( \sum_{\eta \in s(\lambda) \Lambda^{(1,\ldots, 1)}}  t_\eta t_\eta^*\right)  t_\lambda^* = \sum_{\eta \in s(\lambda) \Lambda^{(1,\ldots, 1)}} P(Z(\lambda \eta).
\label{eq:P-additivity} 
 \end{equation}}
 \end{proof}

We now record some properties of  $P$ which we will rely on in the sequel.
 The equations below are the analogues for $k$-graphs of Equations (2.7) and (2.8) and (2.13) of \cite{dutkay-jorgensen-atomic}.

 \begin{prop} 
 \label{prop-atomic-basic-equns} Let $\Lambda$ be a row-finite, source-free $k$-graph, and fix a representation $\{t_\lambda:\lambda \in \Lambda\}$ of $C^*(\Lambda)$.
 \begin{itemize}\label{prop:pvm-properties}
 \item[(a)] For $\lambda, \eta \in\Lambda$ with $s(\lambda)=r(\eta)$, we have $t_\lambda P(Z(\eta)) t_\lambda^*=P(\sigma_\lambda(Z(\eta)))$, where $\sigma_\lambda$ is the {prefixing} 
  map on $\Lambda^\infty$ given in Equation \eqref{eq:shift-map}.
 
 \item[(b)] For any fixed $n \in \N^k$, we have
  \[
  \sum_{\lambda \in  f(\eta) \Lambda^n} t_\lambda P(\sigma_\lambda^{-1}(Z(\eta))) t_\lambda^* = P(Z(\eta)) ;
  \]
  \item[(c)]  For any $\lambda,\eta \in \Lambda$ with $r(\lambda)=r(\eta)$, we have $t_\lambda P(\sigma_\lambda^{-1}(Z(\eta))) = P(Z(\eta)) t_\lambda$;
  \item[(d)] When $\lambda\in\Lambda^n$, we have $t_\lambda P( Z(\eta)) =  P((\sigma^n)^{-1}(Z(\eta))) t_\lambda$.
 \end{itemize}
 \end{prop}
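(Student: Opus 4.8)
The plan is to treat all four identities by a single template: first identify, using the factorization property, the image or preimage of the cylinder set $Z(\eta)$ under the relevant map ($\sigma_\lambda$ or $\sigma^n$) as a \emph{disjoint} union of cylinder sets; then invoke the additivity of $P$ established in Proposition \ref{conj-palle-proj-valued-measure-gen-case} to expand $P$ over that union; and finally rewrite the resulting sums of range projections $t_\zeta t_\zeta^*$ using the Cuntz--Krieger relations. I would prove (a), (c) and (d) directly in this way, and then deduce (b) from (c) together with (CK4).

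For (a), the key observation is the set identity $\sigma_\lambda(Z(\eta)) = Z(\lambda\eta)$, valid whenever $s(\lambda) = r(\eta)$ (a path begins with $\lambda\eta$ if and only if it has the form $\lambda y$ with $y \in Z(\eta)$). Then $P(\sigma_\lambda(Z(\eta))) = t_{\lambda\eta}t_{\lambda\eta}^*$, and (CK2) rewrites this as $t_\lambda (t_\eta t_\eta^*) t_\lambda^* = t_\lambda P(Z(\eta)) t_\lambda^*$, which is the claim.

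For (c) and (d) I would first compute the relevant preimages. Writing $x = x(0,n)\,\sigma^n(x)$ as in Remark \ref{rmk:rainbow}, one gets $(\sigma^n)^{-1}(Z(\eta)) = \bigsqcup_{\mu \in \Lambda^n,\, s(\mu)=r(\eta)} Z(\mu\eta)$, while a short argument with $\Lambda^{\operatorname{min}}$ gives $\sigma_\lambda^{-1}(Z(\eta)) = \bigsqcup_{(\alpha,\beta)\in \Lambda^{\operatorname{min}}(\lambda,\eta)} Z(\alpha)$: a path $\lambda y$ begins with $\eta$ precisely when its degree-$(d(\lambda)\vee d(\eta))$ prefix can be factored both as $\lambda\alpha$ and as $\eta\beta$, i.e.\ when $y$ begins with some $\alpha$ occurring in $\Lambda^{\operatorname{min}}(\lambda,\eta)$. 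After applying additivity of $P$, identity (d) reduces to the orthogonality relation $t_\mu^* t_\lambda = \delta_{\mu,\lambda}\, t_{s(\lambda)}$ for $\mu,\lambda$ of the same degree $n$ (itself a consequence of \eqref{eq:CK4-2}): only the $\mu=\lambda$ term can survive, and it does so exactly when $s(\lambda)=r(\eta)$, recovering $t_{\lambda\eta}t_\eta^* = t_\lambda P(Z(\eta))$; when $s(\lambda)\ne r(\eta)$ both sides vanish since $t_\lambda t_\eta = t_\lambda t_{s(\lambda)} t_{r(\eta)} t_\eta = 0$. Identity (c) reduces to \eqref{eq:CK4-2} applied to $t_\eta^* t_\lambda$, combined with the symmetry $(\alpha,\beta)\in\Lambda^{\operatorname{min}}(\lambda,\eta) \iff (\beta,\alpha)\in\Lambda^{\operatorname{min}}(\eta,\lambda)$ and the equality $t_{\lambda\alpha}=t_{\eta\beta}$, so that both $t_\lambda P(\sigma_\lambda^{-1}(Z(\eta)))$ and $P(Z(\eta))t_\lambda$ equal $\sum_{(\alpha,\beta)} t_{\lambda\alpha}t_\alpha^*$.

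Finally, (b) follows formally from (c): since every $\lambda$ in the sum has $r(\lambda)=r(\eta)$, I replace $t_\lambda P(\sigma_\lambda^{-1}(Z(\eta)))$ by $P(Z(\eta))t_\lambda$, pull $P(Z(\eta))$ out of the sum, and use (CK4) to collapse $\sum_{\lambda\in r(\eta)\Lambda^n} t_\lambda t_\lambda^* = t_{r(\eta)}$, noting $P(Z(\eta))t_{r(\eta)} = t_\eta t_\eta^* t_{r(\eta)} = t_\eta t_\eta^* = P(Z(\eta))$. I expect the main obstacle to be the set-theoretic computation of $\sigma_\lambda^{-1}(Z(\eta))$ via $\Lambda^{\operatorname{min}}$, where the factorization property must be applied carefully to verify that truncating to degree $d(\lambda)\vee d(\eta)$ produces exactly the pairs of $\Lambda^{\operatorname{min}}(\lambda,\eta)$ and that the resulting union is genuinely disjoint; once these set identities are established, each operator identity is a mechanical application of the Cuntz--Krieger relations and the additivity of $P$.
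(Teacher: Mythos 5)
Your proposal is correct, and it supplies in full detail exactly the computation the paper dismisses as a "straightforward calculation": identify $\sigma_\lambda(Z(\eta))=Z(\lambda\eta)$, $\sigma_\lambda^{-1}(Z(\eta))=\bigsqcup_{(\alpha,\beta)\in\Lambda^{\min}(\lambda,\eta)}Z(\alpha)$, and $(\sigma^n)^{-1}(Z(\eta))=\bigsqcup_{\mu\in\Lambda^n r(\eta)}Z(\mu\eta)$, then apply additivity of $P$ and the Cuntz--Krieger relations (reading the paper's "$f(\eta)$" in part (b) as the typo for "$r(\eta)$" that it evidently is). The case split in (d) according to whether $s(\lambda)=r(\eta)$ and the derivation of (b) from (c) plus (CK4) are both handled correctly, so nothing is missing.
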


 \begin{proof}
Straightforward calculation.
 \end{proof}

 \subsection{Disjoint and irreducible representations}
 \label{sub-disj-irred}
 
In this section we will derive from the technical results in Section \ref{sec:lambda-proj-systems}
important consequences that detail 
when representations of $k$-graph $C^*$-algebras are disjoint or irreducible.   In particular, Theorem \ref{thm-disjoint-monic-repres} suggests the importance of dealing with $\Lambda$-projective systems with non-negative functions $f_\lambda$.  We will focus more exclusively on such $\Lambda$-projective systems in Section \ref{sec:univ_repn} below.

 \begin{thm} (C.f.~Theorem 2.12 of \cite{dutkay-jorgensen-monic})
 \label{thm-disjoint-monic-repres}
Let $\Lambda$ be a finite  $k$-graph with no sources. Suppose we are given two $\Lambda$-projective systems on the infinite path space $\Lambda^\infty$ with the standard prefixing and coding maps $\{\sigma_{\lambda}:\lambda\in \Lambda\}$, $\{\sigma^n: n \in \N^k\}$, but associated to different measures $\mu, \mu'$ and different $\Lambda$-projective families of non-negative functions $\{ f_\lambda \}_{\lambda \in \Lambda}$ on $(\Lambda^\infty,\mu)$, and $\{ f_\lambda' \}_{\lambda \in \Lambda}$ on $(\Lambda^\infty,\mu')$. Then the two associated representations $\{T_\lambda: \lambda \in \Lambda\}$ and $\{T_\lambda': \lambda \in \Lambda\}$  of $C^*(\Lambda)$ given by Equation \eqref{eq:T-lambda} of Proposition  \ref{prop:lambda-proj-repn}  are disjoint if and only if the measures $\mu$ and $\mu'$ are mutually singular.
 \end{thm}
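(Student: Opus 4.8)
The plan is to run both implications through the projection valued measures $P,P'$ attached to the two representations (Definition \ref{def-proj-val-measu}, Proposition \ref{conj-palle-proj-valued-measure-gen-case}). The first thing I would record is that for a $\Lambda$-projective representation the projection valued measure is simply multiplication by indicator functions. Indeed, the computation in the proof of Proposition \ref{prop:lambda-proj-repn} gives $T_\lambda T_\lambda^*(f)=\chi_{R_\lambda}\,f$, and for the standard prefixing maps on $\Lambda^\infty$ we have $R_\lambda=Z(\lambda)$, so $P(Z(\lambda))=M_{\chi_{Z(\lambda)}}$ on $L^2(\Lambda^\infty,\mu)$ and likewise $P'(Z(\lambda))=M_{\chi_{Z(\lambda)}}$ on $L^2(\Lambda^\infty,\mu')$. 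By Proposition \ref{conj-palle-proj-valued-measure-gen-case} and uniqueness of the extension, $P(E)=M_{\chi_E}$ and $P'(E)=M_{\chi_E}$ for every $E\in\mathcal B_o(\Lambda^\infty)$. Moreover, any operator $W\colon L^2(\mu)\to L^2(\mu')$ intertwining the two representations intertwines these projection valued measures: applying the intertwining relation to $t_\lambda t_\lambda^*\in C^*(\Lambda)$ gives $W\,P(Z(\lambda))=P'(Z(\lambda))\,W$, and the collection of Borel sets on which $W\,P(E)=P'(E)\,W$ holds is a $\sigma$-algebra (closed under complements and countable disjoint unions by strong-operator continuity) containing the generating cylinder sets, hence all of $\mathcal B_o(\Lambda^\infty)$.

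For the implication $\mu\perp\mu'\Rightarrow$ disjoint, I would choose, by mutual singularity, a Borel set $E_0$ with $\mu(\Lambda^\infty\setminus E_0)=0$ and $\mu'(E_0)=0$. Given any intertwiner $W$, the previous paragraph yields $W=W\,P(E_0)=P'(E_0)\,W=0$, where $P(E_0)=\id$ on $L^2(\mu)$ (since $\mu$ is carried by $E_0$) and $P'(E_0)=0$ on $L^2(\mu')$ (since $\mu'(E_0)=0$). Thus the only intertwiner is $0$, and the representations are disjoint.

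For the converse I would prove the contrapositive: assuming $\mu\not\perp\mu'$, I construct a nonzero intertwiner, in fact a unitary equivalence of nonzero subrepresentations. Write the Lebesgue--Radon--Nikodym decomposition $d\mu'=h^2\,d\mu+d\nu$ and take the partition $X=A\cup B$ of Proposition \ref{prop-2.11}; non-singularity means $\mu(A_0)>0$, where $A_0:=A\cap\{h>0\}$. The key lemma I need is that $A_0$ is invariant under the coding maps modulo $\mu$-null sets. I would extract this from Proposition \ref{prop-2.11}(d): on $R_\lambda\cap A$ the functions $f_\lambda,f_\lambda'$ are strictly positive by Condition (a) of Definition \ref{def:lambda-proj-system}, so the relation $f_\lambda' h=f_\lambda\,(h\circ\sigma^{d(\lambda)})$ forces the pointwise equivalence $h(x)=0\iff h(\sigma^{d(\lambda)}(x))=0$. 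Since the sets $R_\lambda$ with $d(\lambda)=n$ partition $\Lambda^\infty$ up to null sets (Equation \eqref{eq-partition}) and $\sigma^{n}=\sigma_\lambda^{-1}$ on $R_\lambda$, this says exactly $(\sigma^n)^{-1}(A_0)=A_0$. Consequently $L^2(A_0,\mu)$ and $L^2(A_0,\mu')$ are reducing subspaces for $T$ and $T'$ respectively: invariance of $A_0$ controls $T_\lambda$, while the adjoint formula \eqref{eq:T-adj} together with the identity $\sigma_\lambda(A_0\cap D_\lambda)=A_0\cap R_\lambda$ controls $T_\lambda^*$.

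Finally I would define $W\colon L^2(A_0,\mu)\to L^2(A_0,\mu')$ by $Wf=f/h$. Since $\mu'|_{A_0}=h^2\,d\mu$ (because $\nu(A)=0$), a one-line computation gives $\|Wf\|_{\mu'}=\|f\|_\mu$ and surjectivity, so $W$ is unitary and nonzero. The intertwining identity $WT_\lambda=T_\lambda'W$ reduces, after cancelling the common factor $f\circ\sigma^{d(\lambda)}$ and using $\chi_{A_0}\circ\sigma^{d(\lambda)}=\chi_{A_0}$ (invariance), to precisely the relation $f_\lambda\,(h\circ\sigma^{d(\lambda)})=f_\lambda' h$ of Proposition \ref{prop-2.11}(d); and because $W$ is a unitary between reducing subspaces, intertwining the generators automatically yields $WT_\lambda^*=T_\lambda'^*W$. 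Extending $W$ by zero on $L^2(A_0,\mu)^\perp$ then produces a nonzero intertwiner of the full representations, so $T$ and $T'$ are not disjoint. I expect the main obstacle to be the invariance of $A_0$: this is exactly where the non-negativity hypothesis on the families $\{f_\lambda\},\{f_\lambda'\}$ is essential, since it lets me pass from $|f_\lambda'|h=|f_\lambda|(h\circ\sigma^{d(\lambda)})$ to the pointwise equivalence $h(x)=0\iff h(\sigma^{d(\lambda)}x)=0$, and everything downstream---that $W$ is well defined, isometric, and intertwining---depends on it.
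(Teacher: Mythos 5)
Your proposal is correct, and one of its two halves genuinely diverges from the paper's route. The direction ``non-singular $\Rightarrow$ non-disjoint'' is essentially the paper's argument: both rely on the Lebesgue--Radon--Nikodym decomposition $d\mu' = h^2\,d\mu + d\nu$ and Proposition \ref{prop-2.11}(d) together with non-negativity to produce a multiplication intertwiner. The paper takes $W(f)=fh$ on $L^2(A,\mu')$ (and zero on its complement), which sidesteps any need to pass to $\{h>0\}$ and then concludes $h=0$ from disjointness; you instead divide by $h$ on $A_0=A\cap\{h>0\}$, which forces you to verify the $\sigma^n$-invariance of $A_0$ and the reducibility of $L^2(A_0,\mu)$, but rewards you with an explicit unitary equivalence of nonzero subrepresentations rather than a proof by contradiction. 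Your invariance argument for $A_0$ is sound: the a.e.\ non-vanishing of $f_\lambda$ on $R_\lambda$ (and of $f'_\lambda$, after noting $\mu\ll\mu'$ on $\{h>0\}\cap A$) does give $h(x)=0 \iff h(\sigma^{d(\lambda)}x)=0$ a.e.\ on $R_\lambda\cap A$. The direction ``mutually singular $\Rightarrow$ disjoint'' is where you take a different path: the paper proves the contrapositive by restricting to the diagonal $C(\Lambda^\infty)$, observing that both restrictions are multiplication representations, and citing Theorem 2.2.2 of \cite{arveson}; you instead propagate the intertwining relation from cylinder sets to all Borel sets and evaluate the projection-valued measures on a set carrying $\mu$ but null for $\mu'$, annihilating every intertwiner directly. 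This is more elementary and self-contained, at the cost of a small monotone-class step: ``closed under complements and countable disjoint unions'' yields a $\lambda$-system rather than a $\sigma$-algebra outright, so you should invoke the $\pi$--$\lambda$ theorem with the cylinder sets as the generating $\pi$-system (which they are, since $Z(\lambda)\cap Z(\eta)$ decomposes into cylinder sets via $\Lambda^{\operatorname{min}}(\lambda,\eta)$). With that cosmetic repair, both implications go through.
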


 \begin{proof}

If the representations are not disjoint, there exist subspaces $\H_\mu \subseteq L^2(\Lambda^\infty, \mu)$ and $\H_{\mu'} \subseteq L^2(\Lambda^\infty, \mu')$, preserved by their respective representations, and a unitary $W: \H_\mu \to \H_{\mu'}$ such that 
\[W T_\lambda |_{\H_\mu} = T_\lambda' |_{\H_{\mu'}} W, \qquad W T_\lambda^* |_{\H_\mu} = (T_\lambda')^* |_{\H_{\mu'}} W.\]
The fact that each operator $T_\lambda^*$ also preserves $\H_\mu$ implies that 
\begin{align*}
W T_\lambda T_\lambda^* |_{\H_\mu} &= W T_\lambda |_{\H_\mu} T_\lambda^* |_{\H_\mu} = T_\lambda' |_{\H_{\mu'}} W T_\lambda^* |_{\H_\mu} \\
&= T_\lambda' (T_\lambda')^*|_{\H_{\mu'}} W.
\end{align*}
Moreover, it follows easily from the formulas for $T_\lambda$ and $T_\lambda^*$ in Equations \eqref{eq:T-lambda} and \eqref{eq:T-adj} that 
\[ T_\lambda T_\lambda^* = M_{\chi_{Z(\lambda)}} = T_\lambda' (T_\lambda')^*.\]
In other words,  the representations of $C(\Lambda^\infty)$ given by $\chi_{Z(\lambda)} \mapsto T_\lambda T_\lambda^*$ and $\chi_{Z(\lambda)} \mapsto T_\lambda' (T_\lambda')^*$ (on $L^2(\Lambda^\infty, \mu)$ and $L^2(\Lambda^\infty, \mu')$ respectively) are multiplication representations.
Since $W$ implements a unitary equivalence between their subrepresentations on $\H_\mu$ and $H_{\mu'}$ respectively, Theorem 2.2.2 of \cite{arveson} tells us that  the measures $\mu, \mu'$ cannot be mutually singular.  
 
 For the converse, assume that the representations are disjoint and that the measures
 are not mutually singular. Then, use Proposition \ref{prop-2.11}  and decompose $ d\mu' = h^2 d\mu +d\nu$,
 with the subsets $A$, $B$ as in Proposition \ref{prop-2.11}.
 Define the operator $W$ on $L^2(\Lambda^\infty, \mu')$ by $W(f) = f\cdot h$ if $f \in  L^2(A, \mu')$, and $W(f) = 0$ on the
 orthogonal complement of $L^2(A, \mu') \subseteq L^2(\Lambda^\infty, \mu')$. Since $A$ is invariant under $\tau^n$ for all $ n$, $L^2(A, \mu')$ is an invariant subspace
 for the representation. To  check that $W$ is intertwining, we use part (d) of Proposition \ref{prop-2.11} and the non-negativity condition on $\{f_\lambda\}$ and $\{f'_\lambda\}$ to obtain the almost-everywhere equalities
 \[
 T_\lambda W (f )= f_\lambda(h \circ \tau^{d(\lambda)})(f \circ \tau^{d(\lambda)}) = f_\lambda' \, h\, (f \circ \tau^{d(\lambda)}) = W T_\lambda'(f).
 \]
 Since $W$ intertwines the representations $\{ T_\lambda\}_{\lambda \in \Lambda}, \{ T_{\lambda}'\}_{\lambda \in \Lambda}$ of $C^*(\Lambda)$, we must have $W =0$; hence $h=0$, so $\mu, \mu'$ are mutually singular.
 \end{proof}
 
 \begin{rmk}
 As a Corollary of  Theorem  \ref{thm-disjoint-monic-repres}, 
 we see  that the examples of Markov measures introduced in 
\cite[Section 4.2]{FGJorKP}
 generate representations of $C^*(\Lambda)$ disjoint from the  representation of  \cite[Theorem 3.5]{FGKP};  {see also Example \ref{ex:markov-is-monic} below.} In fact, these  Markov measures are 
 mutually singular with the Perron--Frobenius measure \cite{dutkay-jorgensen-monic}, \cite{Kaku}.
 \end{rmk}

%

  \begin{thm} (C.f. Theorem 2.13 of
   \label{thm-irred-monic-repres} \cite{dutkay-jorgensen-monic})\label{thm:ergodic}
    Let $\Lambda$ be a finite $k$-graph with no sources.  
  Suppose that the infinite path space $\Lambda^\infty$ admits a $\Lambda$-projective system on $(\Lambda^\infty,\mu)$ for some measure $\mu$, with  the standard prefixing maps $\{\sigma_\lambda\, :\, \lambda\in\Lambda\}$ and coding maps $\{\sigma^n\, :\, n\in\N^k\}$  of Definition \ref{def:infinite-path}.  
    Let $\{T_\lambda:\lambda\in\Lambda\}$ be the associated representation of $C^*(\Lambda)$. 
 Then:
 \begin{itemize}
 \item[(a)] The commutant of the operators $\{T_\lambda: \lambda \in \Lambda\}$ consists of multiplication
      operators by functions $h$ with $h \circ \sigma^n = h$, $\mu$-a.e for all $n\in \N^k$.  
 \item[(b)] The representation given by $\{T_\lambda:\lambda\in\Lambda\}$ is
      irreducible if and only if the coding maps $\sigma^n$ are jointly ergodic with respect to the measure $\mu$, i.e., the only Borel
      sets $A\subset \Lambda^\infty$  with $ (\sigma^{n})^{-1}(A)= A$ for all $n$ are sets of measure zero, or of full measure.       \end{itemize}
       
  \end{thm}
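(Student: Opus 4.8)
The plan is to establish (a) by showing that any operator in the commutant is forced to be a multiplication operator and then reading off the invariance condition, after which (b) follows by translating ``trivial commutant'' into a statement about invariant functions and hence invariant sets.

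For (a), suppose $S \in B(L^2(\Lambda^\infty,\mu))$ commutes with every $T_\lambda$ and every $T_\lambda^*$, so that $S$ commutes with each product $T_\lambda T_\lambda^*$. As recorded in the proof of Theorem \ref{thm-disjoint-monic-repres}, $T_\lambda T_\lambda^* = M_{\chi_{Z(\lambda)}}$. Since the cylinder sets $Z(\lambda)$ generate the Borel $\sigma$-algebra of $\Lambda^\infty$, the von Neumann algebra generated by $\{M_{\chi_{Z(\lambda)}} : \lambda \in \Lambda\}$ is all of $L^\infty(\Lambda^\infty,\mu)$, which is maximal abelian in $B(L^2(\Lambda^\infty,\mu))$; as $S$ lies in its commutant, $S = M_h$ for some $h \in L^\infty(\Lambda^\infty,\mu)$. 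To extract the invariance condition I would now impose $M_h T_\lambda = T_\lambda M_h$: using $T_\lambda f = f_\lambda \cdot (f \circ \sigma^{d(\lambda)})$, this reads $h \, f_\lambda \,(f\circ\sigma^{d(\lambda)}) = f_\lambda\,(h\circ\sigma^{d(\lambda)})\,(f\circ\sigma^{d(\lambda)})$ for all $f$. Cancelling $f_\lambda$, which is nonzero $\mu$-a.e.\ on $R_\lambda$ by Condition (a) of Definition \ref{def:lambda-proj-system}, gives $h = h\circ\sigma^{d(\lambda)}$ a.e.\ on $R_\lambda$; since $\Lambda$ has no sources, for each fixed $n$ the ranges $\{R_\lambda : d(\lambda)=n\}$ cover $\Lambda^\infty$ up to a null set, so $h\circ\sigma^n = h$ $\mu$-a.e.\ for all $n$.

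For the reverse inclusion in (a), I would verify directly that an $h$ with $h\circ\sigma^n = h$ a.e.\ for all $n$ produces $M_h$ in the commutant: it commutes with each $T_\lambda$ by the computation just given, and with each $T_\lambda^*$ using the adjoint formula \eqref{eq:T-adj} together with $h\circ\sigma_\lambda = h$ a.e.\ on $D_\lambda$ (which follows from $\sigma^{d(\lambda)}\circ\sigma_\lambda = \mathrm{id}$ and $\mu\circ\sigma_\lambda^{-1}\ll\mu$, as in Remark \ref{rmk:lambda-sbfs-properties}). For (b), irreducibility is equivalent to the commutant consisting only of scalars, hence by (a) to every $h$ with $h\circ\sigma^n = h$ being a.e.\ constant. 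If the $\sigma^n$ are not jointly ergodic, a strictly invariant set $A$ with $0 < \mu(A) < \mu(\Lambda^\infty)$ yields $h = \chi_A$ with $M_h$ a non-scalar commutant element, so the representation is reducible. Conversely, if the $\sigma^n$ are jointly ergodic and $h\circ\sigma^n = h$, then (splitting into real and imaginary parts) each level set $\{h>c\}$ is invariant under every $(\sigma^n)^{-1}$ up to a null set, hence null or co-null, forcing $h$ to be a.e.\ constant.

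The step I expect to be the main obstacle is the bridge, in the converse direction of (b), between the a.e.-invariance of the level sets $\{h>c\}$ and the genuine invariance demanded by the stated ergodicity hypothesis. This requires the standard but technical fact that an a.e.-invariant Borel set for the commuting nonsingular maps $\{\sigma^n\}$ coincides, up to a $\mu$-null set, with a strictly invariant Borel set, so that joint ergodicity applies; the absolute continuity relations of Remark \ref{rmk:lambda-sbfs-properties} are what make this upgrade possible. The remaining ingredients --- the maximal-abelian argument in (a) and the routine pointwise cancellations --- are standard.
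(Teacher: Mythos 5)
Your proposal is correct and follows essentially the same route as the paper's (very terse) proof: the commutant of $\{T_\lambda\}$ lies in the commutant of $\{T_\lambda T_\lambda^* = M_{\chi_{Z(\lambda)}}\}$, hence in the maximal abelian algebra $L^\infty(\Lambda^\infty,\mu)$ of multiplication operators, after which the invariance condition in (a) and the ergodicity equivalence in (b) follow by direct computation. You simply supply the details --- the cancellation of $f_\lambda$ on $R_\lambda$, the adjoint check via $h\circ\sigma_\lambda = h$ on $D_\lambda$, and the upgrade from a.e.-invariant to strictly invariant sets --- that the paper dismisses as a ``straightforward calculation.''
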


 \begin{proof}
We first observe that the commutant of $\{T_\lambda\}_{\lambda \in \Lambda}$ is contained in $C(\Lambda^\infty)' \subseteq B(L^2(\Lambda^\infty, \mu))$ and hence consists of multiplication operators. The proof of part (a) is then a straightforward calculation, and part (b) follows from part (a) and the definition of ergodicity.
 \end{proof}

\section{Monic representations of finite $k$-graph algebras}
\label{sec:monic-results} 

The first main result of this section, Theorem \ref{thm-characterization-monic-repres}, establishes that every monic  representation 
of a finite, strongly connected $k$-graph algebra $C^*(\Lambda)$ is unitarily equivalent to  a $\Lambda$-projective representation of $C^*(\Lambda)$ on $L^2(\Lambda^\infty, \mu_\pi)$, where the measure $\mu_\pi$ arises from the representation.  (See Definition \ref{def:lambda-monic-repres} and  Equation \eqref{eq:monic_measure} below for details.)  After proving Theorem \ref{thm-characterization-monic-repres}, we 
 examine a variety of  examples of representations of $C^*(\Lambda)$, and identify which representations are monic.  This analysis requires our second main result, Theorem \ref{prop-conv-to-8-7}, which provides a measure-theoretic characterization of when a $\Lambda$-semibranching representation is monic. 


\begin{defn}\label{def:lambda-monic-repres}
 Let $\Lambda$ be a finite $k$-graph with no sources.
  A representation $\{ t_\lambda\,:\, \lambda\in\Lambda\}$ of a $k$-graph  on a Hilbert space $\H$ is called \emph{monic} if $t_\lambda \not= 0$ for all $\lambda \in \Lambda$, and there exists a vector $\xi\in \mathcal{H}$ such that
 \[
 \overline{\text{span}}_{\lambda \in \Lambda} \{ t_\lambda t_\lambda^* \xi \} = \mathcal{H}.
 \]
 \end{defn}

From the projection valued measure $P$ 
associated to $\{t_\lambda:\lambda \in \Lambda\}$ as in Theorem~\ref{conj-palle-proj-valued-measure-gen-case}, we obtain a representation $\pi:C(\Lambda^\infty)\to B( \mathcal{H})$: 
\[
\pi(f)=\int_{\Lambda^\infty}f(x)dP(x),
\]
which gives, for $\lambda\in\Lambda$,
\begin{equation}\label{eq:repn_pi}
\pi(\chi_{Z(\lambda)})=\int_{\Lambda^\infty}\chi_{Z(\lambda)}\,(x)\,dP(x)=P(Z(\lambda))=t_\lambda t^*_\lambda.
\end{equation}
Since 
we can view $C(\Lambda^\infty)$ as a subalgebra of $C^*(\Lambda)$ via the embedding $\chi_{Z(\lambda)} \mapsto t_\lambda t_\lambda^*$, the representation $\pi$ is often understood as  the restriction of the representation $\{ t_\lambda \}_{\lambda \in \Lambda}$ to the ``diagonal subalgebra'' $\overline{\text{span}}\{ t_\lambda t_\lambda^*\}_{\lambda \in \Lambda}.$

If the representation $\{t_\lambda\}_\lambda$ is monic, 
there is a cyclic vector $\xi\in \mathcal{H}$ for $\pi$.  This induces} 
we obtain  a Borel measure $\mu_\pi$ on $\Lambda^\infty$ given by
\begin{equation}\label{eq:monic_measure}
\mu_\pi(Z(\lambda))=\langle \xi, P(Z(\lambda))\xi\rangle=\langle \xi, t_\lambda t^*_\lambda\xi \rangle.
\end{equation}

\begin{thm}
\label{thm-characterization-monic-repres} 
Let $\Lambda$ be a finite  $k$-graph with no sources.  
If $\{t_\lambda\}_{\lambda \in \Lambda}$ is a monic representation of $C^*(\Lambda)$ on a Hilbert space $\H$,
then $\{t_\lambda\}_{\lambda\in \Lambda}$ is unitarily equivalent to a representation $\{S_\lambda\}_{\lambda\in \Lambda}$ associated to a $\Lambda$-projective system on $(\Lambda^\infty, \mu_\pi)$, {which is associated to the standard coding and prefixing maps $\sigma^n, \sigma_\lambda$ of Definition \ref{def:infinite-path}}.

Conversely, if we have a representation of $C^*(\Lambda)$ on $L^2(\Lambda^\infty, \mu)$ which arises from a $\Lambda$-projective system {associated to the standard coding and prefixing maps $\sigma^n, \sigma_\lambda$}, then the representation is monic. 
\end{thm}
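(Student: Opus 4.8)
The plan is to dispatch the (easier) converse first and then build an explicit unitary equivalence in the forward direction, reducing everything to Proposition~\ref{prop:lambda-proj-repn}.

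\textbf{Converse.} Suppose $\{T_\lambda\}$ arises from a $\Lambda$-projective system on $(\Lambda^\infty,\mu)$ with the standard maps. From the proof of Proposition~\ref{prop:lambda-proj-repn} we have $T_\lambda T_\lambda^* = M_{\chi_{R_\lambda}} = M_{\chi_{Z(\lambda)}}$, so the natural candidate cyclic vector is $\xi = \chi_{\Lambda^\infty}\in L^2(\Lambda^\infty,\mu)$ (legitimate since $\Lambda$ finite forces $\Lambda^\infty$ compact and $\mu$ finite). Then $T_\lambda T_\lambda^*\xi = \chi_{Z(\lambda)}$, and it remains to check $\overline{\operatorname{span}}\{\chi_{Z(\lambda)}:\lambda\in\Lambda\}=L^2(\Lambda^\infty,\mu)$. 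This is routine: the cylinder functions are closed under products (via \eqref{eq:CK4-2}) and contain the constants, so by Stone--Weierstrass their span is uniformly dense in $C(\Lambda^\infty)$, hence $L^2$-dense. Condition~(a) of Definition~\ref{def:lambda-proj-system} guarantees each $T_\lambda\neq 0$, so the representation is monic.

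\textbf{Forward direction, set-up.} Given a monic representation $\{t_\lambda\}$ with cyclic vector $\xi$ for $\pi$, cyclicity of $\xi$ for the abelian algebra $\pi(C(\Lambda^\infty))$ generated by $P$ gives $\|\pi(f)\xi\|^2 = \int|f|^2\,d\mu_\pi$, so $\pi(f)\xi\mapsto f$ extends to a unitary $U:\H\to L^2(\Lambda^\infty,\mu_\pi)$ intertwining $\pi$ with multiplication; in particular $U\xi = \chi_{\Lambda^\infty}$ and $U t_\lambda t_\lambda^* U^* = M_{\chi_{Z(\lambda)}}$. I set $S_\lambda := U t_\lambda U^*$ and $f_\lambda := S_\lambda\chi_{Z(s(\lambda))}= U(t_\lambda\xi)$, which is supported on $Z(\lambda)$. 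Conjugating the covariance identity $t_\lambda\pi(g) = \pi(g\circ\sigma^{d(\lambda)})t_\lambda$ (a reformulation of Proposition~\ref{prop-atomic-basic-equns}(d)) by $U$ yields $S_\lambda M_g = M_{g\circ\sigma^{d(\lambda)}}S_\lambda$; combined with $S_\lambda = S_\lambda M_{\chi_{Z(s(\lambda))}}$ this produces exactly the weighted-composition form $S_\lambda f = f_\lambda\cdot(f\circ\sigma^{d(\lambda)})$ of \eqref{eq:T-lambda}. Condition~(b) of Definition~\ref{def:lambda-proj-system} is then immediate from $S_\lambda S_\nu = S_{\lambda\nu}$ evaluated on $\chi_{Z(s(\nu))}$.

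\textbf{The main obstacle.} By Proposition~\ref{prop:lambda-proj-repn} it now suffices to show that the standard maps form a genuine $\Lambda$-semibranching function system on $(\Lambda^\infty,\mu_\pi)$ and that $\{f_\lambda\}$ satisfies Condition~(a); the representation is then automatically the associated $\Lambda$-projective representation, unitarily equivalent to $\{t_\lambda\}$ via $U$. All the combinatorial semibranching axioms hold for free because the cylinder sets genuinely partition $\Lambda^\infty$. The substantive point, and the step I expect to be hardest, is the positivity of the Radon--Nikodym derivatives $\Phi_{\sigma_\lambda}$, equivalently $f_\lambda\neq 0$ $\mu_\pi$-a.e.\ on $Z(\lambda)$; even $\mu_\pi(Z(v))>0$ (as required in Definition~\ref{def-lambda-SBFS-1}(b)) needs an argument, since if $t_v\xi=0$ then cyclicity would force $P(Z(v))=0$, i.e.\ $t_v=0$, contradicting monicity. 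I would establish positivity in two moves. First, cyclicity yields a maximal-spectral-type property: $\mu_\pi(E)=0$ implies $P(E)=0$, hence $\langle\zeta,P(E)\zeta\rangle=0$ for all $\zeta$; applied to $\zeta=t_\lambda^*\xi$ together with Proposition~\ref{prop-atomic-basic-equns}(a) this gives $\mu_\pi\circ\sigma_\lambda \ll \mu_\pi$, so $\Phi_{\sigma_\lambda}$ exists. Second, if $f_\lambda$ vanished on $N\subseteq Z(\lambda)$ with $\mu_\pi(N)>0$, then $P(N)t_\lambda\xi=0$; writing $N=\sigma_\lambda(M)$ with $M=\sigma^{d(\lambda)}(N)$, using $P(N)=t_\lambda P(M)t_\lambda^*$ and the fact that $t_\lambda$ is isometric on $\operatorname{ran}(t_{s(\lambda)})$, I obtain $P(M)\xi=0$, i.e.\ $\mu_\pi(M)=0$; absolute continuity from the first move forces $\mu_\pi(N)=\mu_\pi(\sigma_\lambda(M))=0$, a contradiction.

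\textbf{Conclusion.} A direct computation on the generating cylinders $Z(\lambda\eta')$ (using $t_{\lambda\eta'}^*t_\lambda=t_{\eta'}^*$) identifies $\frac{d(\mu_\pi\circ\sigma_\lambda^{-1})}{d\mu_\pi}=|f_\lambda|^2$, which is nonzero a.e.\ on $Z(\lambda)$ by the positivity just established, so Condition~(a) holds. Thus $(\{\sigma_\lambda\},\{\sigma^n\},\{f_\lambda\})$ is a $\Lambda$-projective system on $(\Lambda^\infty,\mu_\pi)$, Proposition~\ref{prop:lambda-proj-repn} identifies $\{S_\lambda\}$ as its representation, and $S_\lambda = U t_\lambda U^*$ completes the forward direction.
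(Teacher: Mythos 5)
Your proposal is correct and follows essentially the same route as the paper's proof: the unitary built from cyclicity of $\xi$ for $\pi(C(\Lambda^\infty))$, the definition $f_\lambda = S_\lambda 1 = U(t_\lambda\xi)$, the covariance identities from Proposition \ref{prop:pvm-properties} to obtain the weighted-composition form, identification of $|f_\lambda|^2$ with $\frac{d(\mu_\pi\circ\sigma_\lambda^{-1})}{d\mu_\pi}$, and the reduction to Proposition \ref{prop:lambda-proj-repn}, with the converse handled identically via $T_\lambda T_\lambda^* = M_{\chi_{Z(\lambda)}}$. The only (valid) micro-level deviation is in establishing $\mu_\pi\circ\sigma_\lambda \ll \mu_\pi$ and the a.e.\ non-vanishing of $f_\lambda$, where you use the maximal-spectral-type property ($\mu_\pi(E)=0 \Rightarrow P(E)=0$) and isometry of $t_\lambda$ on $\operatorname{ran}(t_{s(\lambda)})$, while the paper argues via orthogonality of $\pi(\chi_{E_\lambda})$ to the range projection $\pi(\chi_{Z(\lambda)})$.
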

 
 By Example \ref{ex:standard-SBFS-is-proj}, this implies that a $\Lambda$-semibranching function system on $(\Lambda^\infty, \mu)$, for any Borel measure $\mu$, gives rise to a monic representation of $C^*(\Lambda)$.

\begin{proof}
 Suppose that  the representation $\{t_\lambda\}_{\lambda \in \Lambda}$ of $C^*(\Lambda)$ is monic, and let $\xi\in \H$ be a cyclic vector for $C(\Lambda^\infty)$.
 Note that the map $W : C(\Lambda^\infty) \to  \mathcal{H}$ given by 
\[
  W( f) = \pi(f) \xi
\]
is 
 linear. Moreover, if we think of $C(\Lambda^\infty)$ as a dense subspace of $L^2(\Lambda^\infty, \mu_\pi)$, the operator $W$ is isometric:
\[ 
 \| f\|^2_{L^2} = \int_{\Lambda^\infty} |f|^2 \, d\mu_\pi  = \langle \xi, \pi(|f|^2) \xi \rangle = \| \pi(f) \xi \|^2 = \|W(f)\|^2.
\]
Therefore $W$ extends to an isometry  from $
L^2(\Lambda^\infty, \mu_\pi) $ to $ \mathcal{H}.$
Since $W$ is also onto (because the representation is monic), $W$ is a surjective isometry; that is, $W$ is a unitary.

Moreover, for any $f \in C(\Lambda^\infty)$ and any $\varphi \in L^2(\Lambda^\infty, \mu_\pi)$, we have 
\[ \pi(f) W(\varphi) = \pi(f) \pi(\varphi) \xi = \pi(f \cdot \varphi) \xi = W(f \cdot \varphi).\]
Thus, unitarity of  $W$ implies that $W^* \pi(f) W$ acts on $L^2(\Lambda^\infty, \mu_\pi)$ by multiplication by $f$:
\begin{equation}
W^* \pi(f) W = M_f \text{ and } W M_f W^* = \pi(f).\label{eq:w-intertwines-pi}
\end{equation}

 Now define the operator
$
S_\lambda = W^* t_\lambda W \text{ for $\lambda \in \Lambda$.} 
$  By construction, 
 the operators $\{S_\lambda\}_{\lambda \in \Lambda}$ also  give  a representation of $C^*(\Lambda)$. Moreover, since $W$ is a unitary, 
\begin{equation}\begin{split} S_\lambda S_\lambda^* (f) &= W^* t_\lambda t_\lambda^*  W(f) = W^* \pi(\chi_{Z(\lambda)}) \pi(f) \xi = W^* \pi( \chi_{Z(\lambda)} \cdot f) \xi \\
&= W^* W (\chi_{Z(\lambda)} \cdot f) = \chi_{Z(\lambda)} \cdot f.
\end{split}
\label{eq:range-sets}
\end{equation}

Let $1$ denote the characteristic function of $\Lambda^\infty$, and define a function $f_\lambda \in L^2(\Lambda^\infty, \mu_\pi)$ by 
\[
f_\lambda = S_\lambda 1 = W^* t_\lambda \xi.
\]
 We will now show that the functions $f_\lambda$, combined with the usual coding and prefixing maps $\{\sigma^n, \sigma_\lambda\}_{n, \lambda}$ on $\Lambda^\infty$, form a $\Lambda$-projective system on $(\Lambda^\infty, \mathcal{B}_o(\Lambda^\infty), \mu_\pi)$. 
  To that end, we will invoke Proposition \ref{prop:pvm-properties}.  Since
\[ P(Z(\nu)) = \pi(\chi_{Z(\nu)})\]
for any $\nu \in \Lambda$, and the proof of \cite[Lemma 4.1]{FGKP} shows that characteristic functions of cylinder sets densely span $L^2(\Lambda^\infty, \mu_\pi)$, the equalities established in Proposition \ref{prop:pvm-properties} still hold if we replace $P(Z(\nu))$ by $\pi(f)$ for any $f \in L^2(\Lambda^\infty, \mu_\pi)$.  In particular, noting that 
\[ \chi_{(\sigma^n)^{-1}(Z(\nu))} = \chi_{Z(\nu)} \circ \sigma^n \quad \text{ and } \quad \chi_{\sigma_\lambda^{-1}(Z(\nu))} = \chi_{Z(\nu)} \circ \sigma_\lambda \]
Part (d) of Proposition \ref{prop:pvm-properties} implies that if $d(\lambda) = n$, 
\begin{equation}
t_\lambda \pi(f) = \pi(f \circ \sigma^n) t_\lambda\label{eq:pvm-useful}
\end{equation} 
and Part (c) implies that 
\begin{equation}
t_\lambda^* \pi(f) = \pi(f \circ \sigma_\lambda) t_\lambda^*.\label{eq:part-c-useful}
\end{equation}

Let $f \in L^2(\Lambda^\infty, \mu_\pi)$ and let $n = d(\lambda)$.  By using Part (d) of Proposition \ref{prop:pvm-properties}, Equation \eqref{eq:w-intertwines-pi}, and the fact that $W$ is a unitary, we obtain 
\begin{align*}
S_\lambda (f) &= W^* t_\lambda W ( f )= W^* t_\lambda \pi(f)\xi \\
&= W^* \pi(f\circ \sigma^n) t_\lambda \xi =  W^* \pi(f\circ \sigma^n)W W^* t_\lambda \xi\\
&=(f\circ \sigma^n) \cdot  f_\lambda.
\end{align*}

In order to show that $\{S_\lambda\}_{\lambda \in \Lambda}$ is a $\Lambda$-projective representation, then, Proposition \ref{prop:lambda-proj-repn} tells us that it remains to check that the standard prefixing and coding maps make $(\Lambda^\infty, \mu_\pi)$ into a $\Lambda$-semibranching function system, and that Condition (a) of Definition \ref{def:lambda-proj-system} holds for the functions $f_\lambda$.

To establish Condition (a), we work indirectly.
Since $W$ is a unitary, we   have (for  any $f \in L^2(\Lambda^\infty, \mu_\pi)$ and any $\lambda \in \Lambda^n$)
\begin{align*}
\int_{\Lambda^\infty} |f_\lambda|^2 \cdot  f \, d\mu_\pi &= \langle S_\lambda(1), S_\lambda(1) \cdot  f \rangle_{ L^2} = \langle W^* t_\lambda (\xi), M_f W^* t_\lambda (\xi)\rangle_{ L^2} \\
&= \langle t_\lambda \xi,  W M_f W^*(t_\lambda \xi )  \rangle_{ \mathcal{H}} = \langle \xi, t_\lambda^* \pi(f) t_\lambda \xi \rangle_{\mathcal{H}} = \langle \xi, \pi(f \circ \sigma_\lambda) \xi \rangle_{\mathcal{H}}\\
&  = \int_{\Lambda^\infty} f \circ \sigma_\lambda \, d\mu_\pi = \int_{\Lambda^\infty} f \, d(\mu_\pi \circ \sigma_\lambda^{-1}).
\end{align*}

If $E\subseteq \Lambda^\infty$ is any set for which $\mu_\pi(E) = 0$, then taking $f = \chi_E$ above  shows that $\mu_\pi \circ \sigma_\lambda^{-1}(E) = 0$ also -- in other words, 
\begin{equation}
\mu_\pi \circ \sigma_\lambda^{-1} << \mu_\pi.\label{eq:mu-pi-abs-cts}
\end{equation}
The uniqueness of Radon-Nikodym derivatives then implies that
\[|f_\lambda|^2 =  \frac{d(\mu_\pi \circ \sigma_\lambda^{-1})}{d(\mu_\pi )}. \]
In other words, Condition (a) of Definition \ref{def:lambda-proj-system} holds.

{We now show that $f_\lambda \not= 0$ a.e.~on $Z(\lambda)$.  Define $E_\lambda \subseteq Z(\lambda)$ by 
\[ E_\lambda := \{ x \in Z(\lambda): f_\lambda(x) = 0\}.\]
Then 
$0 = \int_{E_\lambda} |f_\lambda|^2 \, d\mu_\pi = \mu_\pi \circ \sigma_\lambda^{-1}(E_\lambda) = \pi(\chi_{\sigma_\lambda^{-1} (E_\lambda)}) = t_\lambda^* \pi(\chi_{E_\lambda}) t_\lambda$ by Proposition \ref{prop:pvm-properties}.

By hypothesis, $t_\lambda \not= 0$, so there exists $\zeta \in \H$ such that $t_\lambda(\zeta)\not= 0$.  However, for any $\zeta$,
\[ \langle t_\lambda(\zeta), \pi(\chi_{E_\lambda}) t_\lambda(\zeta) \rangle = \langle t_\lambda(\zeta), \pi (\chi_{E_\lambda})^2 \pi(\chi_{Z(\lambda)}) t_\lambda(\zeta) \rangle = \langle t_\lambda^* \pi(\chi_{E_\lambda}) t_\lambda (\zeta), t_\lambda^* \pi(\chi_{E_\lambda}) t_\lambda(\zeta) \rangle = 0\]
by the Cuntz--Krieger relations and the fact that $\pi(C(\Lambda^\infty))$ is abelian.  In other words, $\pi(\chi_{E_\lambda})$ is orthogonal to the range projection $\pi(\chi_{Z(\lambda)})$ of $t_\lambda$.

On the other hand, $\chi_{E_\lambda} \chi_{Z(\lambda)} = \chi_{E_\lambda}$ since $E_\lambda \subseteq Z(\lambda)$.  It follows that $\pi(\chi_{E_\lambda}) = 0$; equivalently, $\mu_\pi(E_\lambda) = 0$.  In other words, the set $E_\lambda \subseteq Z(\lambda)$ of points where $f_\lambda = 0$ has $\mu_\pi$-measure zero, as claimed.

}

Similarly, for any set $F \subseteq Z(s(\lambda))$ such that  $\mu_\pi(F) = 0$, taking $f = \chi_{\sigma_\lambda(F)}$ reveals that 
\[ 0 = \mu_\pi (F) = \mu_\pi \circ \sigma_\lambda^{-1}(\sigma_\lambda(F)) = \int_{\sigma_\lambda(F)} |f_\lambda|^2 \, d\mu_\pi.\]
Since  $|f_\lambda|^2 >0$ a.e.~on $Z(\lambda) \supseteq \sigma_\lambda(F)$, we must have $ \mu_\pi \circ \sigma_\lambda(F) = 0$
and hence $\mu_\pi \circ \sigma_\lambda << \mu_\pi$.
Furthermore, the Radon-Nikodym derivative $\frac{d(\mu_\pi \circ \sigma_\lambda)}{d(\mu_\pi )}$ is nonzero $\mu_\pi$-a.e.~on $Z(s(\lambda))$. 
To see this, we  set  
\[ E = \left\{ x \in Z(s(\lambda)): \frac{d(\mu_\pi \circ \sigma_\lambda)}{d(\mu_\pi )} = 0 \right\}\]
and observe that 
\[\mu_\pi (\sigma_\lambda(E)) = \int_E \frac{d(\mu_\pi \circ \sigma_\lambda)}{d(\mu_\pi )} \, d\mu_\pi = 0.\]
Equation \eqref{eq:mu-pi-abs-cts} 
therefore  implies that
$\mu_\pi (E) = (\mu_\pi \circ \sigma_\lambda^{-1})(\sigma_\lambda(E)) = 0.$

\cite[Proposition 4.1]{FGJorKP} now implies that the standard prefixing and coding maps make $(\Lambda^\infty, \mu)$ into a $\Lambda$-semibranching function system.  Consequently, the functions $f_\lambda$ make $\{S_\lambda\}_{\lambda \in \Lambda}$ into a $\Lambda$-projective representation, which is unitarily equivalent to our initial monic representation by construction. 

For the converse, suppose that $\{t_\lambda\}_{\lambda\in\Lambda}$ is a representation of $C^*(\Lambda)$ on $L^2(\Lambda^\infty, \mu)$, for some Borel measure $\mu$, which arises from a $\Lambda$-projective system $\{f_\lambda\}_{\lambda\in\Lambda}$   {associated to the standard coding and prefixing maps $\{ \sigma^n, \sigma_\lambda\}_{n, \lambda}$. The computations from Proposition \ref{prop:lambda-proj-repn} establish that $t_\lambda t_\lambda^*$ is given by multiplication by $\chi_{R_\lambda} = \chi_{Z(\lambda)}$.} 
%
Consequently, $1 = \chi_{\Lambda^\infty}$ is a cyclic vector for $C(\Lambda^\infty) \subseteq C^*(\Lambda)$.  Thus, $\{t_\lambda\}_{\lambda \in \Lambda}$ is monic.
\end{proof}


\begin{rmk}
\label{rmk:strengthening-monic}
 In the final section of their paper \cite{bezuglyi-jorgensen}, Bezuglyi and Jorgensen studied the relationship between semibranching function systems and monic representations of Cuntz--Krieger algebras (1-graph $C^*$-algebras). 
Theorem 5.6 of \cite{bezuglyi-jorgensen} establishes that within a specific class of semibranching function systems, which the authors term \emph{monic systems}, those for which the underlying space is the infinite path space $\Lambda^\infty$ are precisely the systems which give rise to monic representations of the Cuntz--Krieger algebra.  The $\Lambda$-projective systems studied in Section \ref{sec:lambda-proj-systems} constitute our extension to $k$-graphs of the monic systems for Cuntz--Krieger algebras.
Thus, even in the case of 1-graph algebras (Cuntz--Krieger algebras), our Theorem \ref{thm-characterization-monic-repres} is substantially stronger than Theorem 5.6 of \cite{bezuglyi-jorgensen}: our Theorem \ref{thm-characterization-monic-repres} gives a complete characterization of monic representations, without the hypothesis that such representations arise from a monic or $\Lambda$-projective system.
\end{rmk}

  
\begin{thm} 
\label{thm-theorem-2.9-of-monic}
  Let $\Lambda$ be a finite, source-free $k$-graph, and let $\{S_\lambda\}_{\lambda \in \Lambda}$, $\{T_\lambda\}_{\lambda \in \Lambda}$ be two monic representations   of $C^*(\Lambda)$. Let $\mu_S, \mu_T$ be the measures on $\Lambda^\infty$ associated to these representations as in \eqref{eq:monic_measure}.
  The representations $\{S_\lambda\}_{\lambda \in \Lambda}$, $\{T_\lambda\}_{\lambda \in \Lambda}$ are  equivalent if and only if the measures  $\mu_S$ and  $\mu_T$ are equivalent 
  and there exists a function $h$ on $\Lambda^\infty$ such that
  \begin{equation}\label{eq-1-thm-monic}
  \frac{d\mu_S}{d\mu_T} =|h|^2 \quad \text{and}
  \end{equation}
 \begin{equation}\label{eq-2-thm-monic}
  f^S_{\lambda} = \frac{ h \circ \sigma^n}{h }f^T_{\lambda} \quad\text{for all  $\lambda\in\Lambda$ with $d(\lambda)=n$.}
  \end{equation}
 \end{thm}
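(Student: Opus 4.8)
The plan is to begin by invoking Theorem \ref{thm-characterization-monic-repres} to put both representations into projective form, so that all the data in the statement becomes explicit. Up to unitary equivalence I may assume that $\{S_\lambda\}$ is the $\Lambda$-projective representation on $L^2(\Lambda^\infty,\mu_S)$ determined by the functions $\{f^S_\lambda\}$ and the standard maps, so that $S_\lambda f = f^S_\lambda\cdot(f\circ\sigma^{d(\lambda)})$ as in Equation \eqref{eq:T-lambda}, and likewise $\{T_\lambda\}$ on $L^2(\Lambda^\infty,\mu_T)$. In each model the constant function $1$ is a cyclic vector, and I would record that the measure associated via \eqref{eq:monic_measure} to this projective model is precisely $\mu_S$ (resp.\ $\mu_T$), since $\langle 1, S_\lambda S_\lambda^* 1\rangle = \int \chi_{Z(\lambda)}\,d\mu_S = \mu_S(Z(\lambda))$, so that the reduction loses no information.

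For the sufficiency direction I would assume $\mu_S\sim\mu_T$ together with a function $h$ satisfying \eqref{eq-1-thm-monic} and \eqref{eq-2-thm-monic}, and then simply produce the intertwiner by hand. Since $|h|^2 = d\mu_S/d\mu_T$ is strictly positive a.e., multiplication by $h$ gives a unitary $U\colon L^2(\Lambda^\infty,\mu_S)\to L^2(\Lambda^\infty,\mu_T)$, $Uf = h\cdot f$; isometry is the change-of-variables identity $\int|h|^2|f|^2\,d\mu_T = \int|f|^2\,d\mu_S$, and surjectivity follows from $h\neq 0$ a.e. The only thing left to check is the intertwining relation, which (writing $n = d(\lambda)$) is the one-line computation
\[ U S_\lambda f = h\,f^S_\lambda\,(f\circ\sigma^n) = (h\circ\sigma^n)\,f^T_\lambda\,(f\circ\sigma^n) = f^T_\lambda\,\big((hf)\circ\sigma^n\big) = T_\lambda U f, \]
where the middle equality is exactly \eqref{eq-2-thm-monic}. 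Intertwining of the adjoints, hence of the full representation, is then automatic. This is a mild refinement of Proposition \ref{prop:lambda-proj-repn-un-equiv} that accommodates the unimodular ambiguity carried by $h$.

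For the necessity direction I would start from a unitary $V\colon L^2(\mu_S)\to L^2(\mu_T)$ with $V S_\lambda = T_\lambda V$, whence $V S_\lambda^* = T_\lambda^* V$ as well. Since the computation in Proposition \ref{prop:lambda-proj-repn} gives $S_\lambda S_\lambda^* = M_{\chi_{Z(\lambda)}}$ and $T_\lambda T_\lambda^* = M_{\chi_{Z(\lambda)}}$, this yields $V M_{\chi_{Z(\lambda)}} = M_{\chi_{Z(\lambda)}} V$ for every $\lambda$. Because the cylinder sets generate $\mathcal{B}_o(\Lambda^\infty)$, these multiplication operators generate the maximal abelian algebra $L^\infty$, so $V$ intertwines the two cyclic multiplication representations of $C(\Lambda^\infty)$ sharing the cyclic vector $1$. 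By spectral multiplicity theory (Theorem 2.2.2 of \cite{arveson}) — or directly by setting $h := V(1)$ and observing $V(\phi) = V M_\phi 1 = \phi\cdot h$ for bounded $\phi$, then extending by density — the operator $V$ must equal $M_h$; isometry then forces $d\mu_S/d\mu_T = |h|^2$, giving \eqref{eq-1-thm-monic} and $\mu_S\sim\mu_T$ (with $h\neq0$ a.e.\ from surjectivity of $V$). Finally, applying $V S_\lambda = T_\lambda V$ to the constant function $1$, and using $V = M_h$, $S_\lambda 1 = f^S_\lambda$, and $T_\lambda h = f^T_\lambda\,(h\circ\sigma^n)$, produces $h\,f^S_\lambda = f^T_\lambda\,(h\circ\sigma^n)$, which is \eqref{eq-2-thm-monic} after dividing by $h$.

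I expect the main obstacle to be the identification step in the necessity direction: justifying that the representation-level intertwiner $V$ is forced to be a single multiplication operator $M_h$ of the required form. This is where I must carefully use that both representations restrict on the diagonal subalgebra $C(\Lambda^\infty)$ to \emph{cyclic} multiplication representations with the common cyclic vector $1$, so that the uniqueness half of multiplicity theory applies; once $V = M_h$ is in hand, extracting \eqref{eq-1-thm-monic} and \eqref{eq-2-thm-monic} is routine. A secondary point deserving care is the bookkeeping at the very start — confirming through \eqref{eq:monic_measure} that the measures and projective functions attached to the two projective models genuinely are $\mu_S,\mu_T$ and $f^S_\lambda,f^T_\lambda$ — so that the initial reduction is legitimate.
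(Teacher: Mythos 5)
Your proposal is correct and follows essentially the same route as the paper: reduce to the $\Lambda$-projective models on $L^2(\Lambda^\infty,\mu_S)$ and $L^2(\Lambda^\infty,\mu_T)$, identify the intertwining unitary as multiplication by $h$ (the paper argues via the maximal abelian subalgebras of multiplication operators, you make it explicit with $h:=V(1)$ and cyclicity of $1$ --- a harmless variation), extract \eqref{eq-1-thm-monic} from unitarity and \eqref{eq-2-thm-monic} by applying the intertwining relation to the constant function $1$, and prove the converse by exhibiting $f\mapsto hf$ as the intertwiner. If anything, your derivation of $\mu_S\sim\mu_T$ directly from $V=M_h$ with $h\neq 0$ a.e.\ is slightly cleaner than the paper's appeal to Theorem \ref{thm-disjoint-monic-repres}.
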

    
  \begin{proof} 
  Suppose $\{S_\lambda\}_{\lambda \in \Lambda}$, $\{T_\lambda\}_{\lambda \in \Lambda}$ are equivalent representations of $C^*(\Lambda)$.  {From Theorem 
  \ref{thm-disjoint-monic-repres}, it follows that the associated measures $\mu_S, \mu_T$ are equivalent.}
  Let
  $
  W:L^2(\Lambda^\infty, \mu_S) \to L^2(\Lambda^\infty,\mu_T)
  $
  be the intertwining 
  unitary for them. Then the two representations are also equivalent when restricted to  the diagonal  subalgebra $C^*(\{ t_\lambda t_\lambda^*: \lambda \in \Lambda \})$.
By linearity, we can extend the formula from Equation \eqref{eq:range-sets} to all of $C(\Lambda^\infty)$.  It follows that $\pi_S, \pi_T$ are both given on $C(\Lambda^\infty)$ by multiplication: 
  \[\pi_S(\phi) = M_\phi \text{ and } \pi_T(\phi) = M_\phi \ \forall \ \phi \in C(\Lambda^\infty).\]
Since $W$ intertwines a dense subalgebra -- namely $\pi_S(C(\Lambda^\infty))$ -- of the maximal abelian subalgebra $L^\infty(\Lambda^\infty, \mu_S) \subseteq B(L^2(\Lambda^\infty, \mu_S))$ which consists of multiplication operators,  with the dense subalgebra $\pi_T(C(\Lambda^\infty)) \subseteq L^\infty(\Lambda^\infty, \mu_T)$, the unitary $W$ must be given by multiplication by some nowhere-vanishing function $h$ on $\Lambda^\infty$: 
   $W(f) = h f.$
   Moreover, since $W$ is a unitary, 
  \[
 \int_{\Lambda^\infty} |W(f)|^2 d\mu_T =  \int_{\Lambda^\infty} |f|^2|h|^2 d\mu_T = \int_{\Lambda^\infty}|f|^2 d\mu_S \quad\text{for all  $f \in L^2(\Lambda^\infty,\mu_S)$},
  \]
  which implies \eqref{eq-1-thm-monic}.  
   
  From the intertwining property   $
  T_\lambda \, W = W \, S_{\lambda} $ 
  we obtain, for any $f \in L^2(\Lambda^\infty, \mu_S)$ and  any $\lambda$ with $d(\lambda)=n$, that 
  \[
   T_\lambda \, W (f) = W \, S_{\lambda}(f) ,\ \text{that is, }
  f^T_\lambda(h \circ \sigma^n) (f \circ \sigma^n) = hf^S_\lambda   (f \circ \sigma^n). 
  \]
  Take $f=1$ and we obtain that 
  \[
  f^T_\lambda \frac{h \circ \sigma^n}{h} = f^S_\lambda\label{eq:10-1}
  \]
  as claimed in \eqref{eq-2-thm-monic}.

  For the converse, suppose that the measures $\mu_S, \mu_T$ are equivalent and there is a function $h$ on $\Lambda^\infty$ satisfying \eqref{eq-1-thm-monic} and \eqref{eq-2-thm-monic}.  Then  define $W: L^2(\Lambda^\infty, \mu_S) \to L^2(\Lambda^\infty, \mu_T)$ by 
  \[
  W f=hf ; \] 
 it is then straightforward to check that $W S_\lambda = T_\lambda W$ and that $W$ is a unitary.\end{proof} 
 

\subsection{$\Lambda$-semibranching function systems and monic representations}
\label{sec:monic-examples}

In this section, we discuss several  examples of $\Lambda$-semibranching function systems and identify which of them give rise to monic representations of $C^*(\Lambda)$ -- or, equivalently, which  are unitarily equivalent to $\Lambda$-semibranching function systems on the infinite path space. 
First, we provide another characterization of monic representations.  The next theorem shows that a $\Lambda$-semibranching system on $(X, \mu)$ induces a monic  representation of $C^*(\Lambda)$ if and only if its associated range sets generate the $\sigma$-algebra of $X$. To  state our result more precisely, we will  
denote by
 $
 ({X,} \mathcal F, \mu)
 $
the measure space associated to $L^2(X, \mu)$; in particular, $\mathcal F$ is the standard $\sigma $-algebra associated to $L^2(X, \mu)$.

\begin{thm}
\label{prop:range-sets-generate-sigma-alg-in-monic}
Let $\Lambda$ be a finite, source-free $k$-graph and 	let  $ \{t_\lambda\}_{\lambda \in \Lambda}$ be a $\Lambda$-semibranching  representation of $C^*(\Lambda)$ on $L^2(X, \mathcal{F}, \mu)$ 
with $\mu(X)< \infty$. Let   $\mathcal{R} $ be the collection of sets which are modifications of range sets $ R_\lambda$ by sets of measure zero; that is, each element $X \in  \mathcal R$ has the form 
	\[ X = R_\lambda \cup S \quad \text{ or } \quad X = R_\lambda \backslash S\]
	for some set $S$ of measure zero. 
	Let $\sigma(\mathcal{R})$ be the $\sigma $-algebra generated by $\mathcal{R}.$
The representation $ \{t_\lambda\}_{\lambda \in \Lambda}$ 
 is monic, with cyclic vector $\chi_X \in L^2(X, \mathcal F, \mu)$, if and only if $\sigma(\mathcal R) = \mathcal F$. 
		\label{prop-conv-to-8-7}
In particular, for a monic representation $\{t_\lambda\}_{\lambda \in \Lambda}$,  the set
	\[
	{\mathcal{S} }:=  \Big\{ \sum_{i=1}^{n} a_{i}t_{\lambda_i} t_{\lambda_i}^* \chi_X \ | \ n \in \N , \lambda_i \in \Lambda, a_i \in \C\Big\} = \Big\{ \sum_{i=1}^{n} a_{i} \chi_{R_{\lambda_i}}  \ | \ \ n \in \N , \lambda_i \in \Lambda, a_i \in \C\Big\}
	\]
	is dense in  $L^2(X, \mathcal F, \mu)$.

\end{thm}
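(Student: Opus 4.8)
The plan is to reduce the whole statement to one clean identification: the closed linear span in $L^2(X,\mathcal{F},\mu)$ of the characteristic functions $\{\chi_{R_\lambda} : \lambda \in \Lambda\}$ is exactly $L^2(X,\sigma(\mathcal{R}),\mu)$. As computed in the proof of Proposition \ref{prop:lambda-proj-repn}, one has $t_\lambda t_\lambda^* = M_{\chi_{R_\lambda}}$, so (since $\mu(X) < \infty$ guarantees $\chi_X \in L^2$ and $R_\lambda \subseteq X$) we get $t_\lambda t_\lambda^* \chi_X = \chi_{R_\lambda}$. This already identifies the two descriptions of $\mathcal{S}$, shows $\mathcal{S} = \operatorname{span}\{\chi_{R_\lambda} : \lambda \in \Lambda\}$, and reduces ``monic with cyclic vector $\chi_X$'' to the condition $\overline{\mathcal{S}} = L^2(X,\mathcal{F},\mu)$. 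Thus both the asserted equivalence and the final density claim follow once the reduction above is established.

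The key step is a filtration argument built from the semibranching structure. For $p \in \N$ set $m_p = (p,\dots,p) \in \N^k$ and $\mathcal{P}_p = \{R_\lambda : d(\lambda) = m_p\}$. By Definition \ref{def-lambda-SBFS-1}(a) the maps $\{\tau_\lambda : d(\lambda) = m_p\}$ form a semibranching function system, so $\mathcal{P}_p$ is a finite (as $\Lambda$ is finite) partition of $X$ modulo $\mu$-null sets. The factorization $R_\lambda = \bigsqcup_{\nu \in s(\lambda)\Lambda^{(1,\dots,1)}} R_{\lambda\nu}$, coming from $\tau_{\lambda\nu} = \tau_\lambda\tau_\nu$ in Definition \ref{def-lambda-SBFS-1}(c) together with the partition property, shows that $\mathcal{P}_{p+1}$ refines $\mathcal{P}_p$; hence the $\sigma$-algebras $\mathcal{F}_p := \sigma(\mathcal{P}_p)$ form an increasing filtration. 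Because each $\mathcal{P}_p$ is a finite partition, I get the exact identity $L^2(X,\mathcal{F}_p,\mu) = \operatorname{span}\{\chi_{R_\lambda} : d(\lambda) = m_p\}$.

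I would then identify $\sigma(\mathcal{R})$ with $\bigvee_p \mathcal{F}_p$ modulo null sets. The inclusion $\bigvee_p \mathcal{F}_p \subseteq \sigma(\mathcal{R})$ is clear since the generators of each $\mathcal{F}_p$ lie in $\mathcal{R}$; for the reverse, given $\lambda$ with $d(\lambda) = n$, I choose $p$ with $m_p \geq n$ and use source-freeness to write $R_\lambda = \bigsqcup_{\nu \in s(\lambda)\Lambda^{m_p - n}} R_{\lambda\nu}$ as a union of level-$p$ range sets, so $R_\lambda \in \mathcal{F}_p$. (The measure-zero modifications built into $\mathcal{R}$ are exactly what make these identities exact rather than merely almost-everywhere.) Now the standard increasing-filtration fact — that $\bigcup_p L^2(\mathcal{F}_p)$ is dense in $L^2(\bigvee_p \mathcal{F}_p)$, which follows from $L^2$-martingale convergence — gives $\overline{\operatorname{span}}\{\chi_{R_\lambda} : \lambda \in \Lambda\} = L^2(X,\sigma(\mathcal{R}),\mu)$: the left side contains $\bigcup_p L^2(\mathcal{F}_p)$ by the previous paragraph and is contained in the $\sigma(\mathcal{R})$-measurable functions since each $R_\lambda \in \sigma(\mathcal{R})$.

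To finish I would invoke the standard fact that for a sub-$\sigma$-algebra $\mathcal{G} \subseteq \mathcal{F}$ one has $L^2(X,\mathcal{G},\mu) = L^2(X,\mathcal{F},\mu)$ if and only if $\mathcal{G} = \mathcal{F}$ modulo null sets (the nontrivial direction: if $A \in \mathcal{F}$ then $\chi_A \in L^2(\mathcal{G})$ forces $A \in \mathcal{G}$ up to a null set). With $\mathcal{G} = \sigma(\mathcal{R})$ this completes the equivalence, and the ``In particular'' density of $\mathcal{S}$ is then immediate from the reduction in the first paragraph. I expect the main obstacle to be the bookkeeping in the filtration step — specifically, checking that every range set of arbitrary, possibly non-diagonal, degree is $\mathcal{F}_p$-measurable for a suitable $p$, and tracking the ``modulo $\mu$-null'' caveats carefully enough that the concluding $\sigma$-algebra comparison is genuinely clean.
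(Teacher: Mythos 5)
Your argument is correct, but it follows a genuinely different route from the paper's. The paper splits the two directions and treats them with different tools: for ``monic $\Rightarrow \sigma(\mathcal R)=\mathcal F$'' it passes from $L^2$-convergence to convergence in measure and uses completeness of the space of a.e.-finite $\sigma(\mathcal R)$-measurable functions in the metric $\int \frac{|f-g|}{1+|f-g|}\,d\mu$ to conclude the limit is $\sigma(\mathcal R)$-measurable; for the converse it forms the algebra $\tilde{\mathcal R}$ of finite unions of elements of $\mathcal R$ and invokes the Carath\'eodory/Kolmogorov approximation of sets in $\sigma(\tilde{\mathcal R})$ by sets in $\tilde{\mathcal R}$, then approximates simple functions via the triangle inequality. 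You instead prove the single structural identity $\overline{\operatorname{span}}\{\chi_{R_\lambda}\}=L^2(X,\sigma(\mathcal R),\mu)$ by organizing the range sets into a refining sequence of finite partitions $\mathcal P_p=\{R_\lambda : d(\lambda)=(p,\dots,p)\}$ and applying $L^2$-martingale convergence (or, equivalently, density of simple functions over the generating algebra $\bigcup_p\mathcal F_p$), from which both directions and the final density claim drop out at once. Your approach is more unified and makes visible the role of the factorization property (every $R_\lambda$ of non-diagonal degree refines into diagonal-degree range sets by source-freeness); the paper's is more elementary, avoiding the filtration machinery, and its Carath\'eodory step is essentially the same approximation your martingale step encodes. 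One point you should make explicit rather than leave implicit in the final ``mod null sets'' comparison: since $\mathcal R$ is defined to include all measure-zero modifications, $\sigma(\mathcal R)$ contains every $\mu$-null set of $\mathcal F$ (e.g.\ $S=(S\cap R_\lambda)\cup(S\setminus R_\lambda)$ with both pieces obtained from elements of $\mathcal R$ by set differences), which is exactly what upgrades ``$A\in\sigma(\mathcal R)$ up to a null set'' to the exact equality $\sigma(\mathcal R)=\mathcal F$ asserted in the statement; the paper's proof relies on the same observation implicitly.
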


\begin{proof}

Suppose first that the representation $\{t_\lambda\}_{\lambda\in \Lambda}$ is monic and that $\chi_X$ is a cyclic vector for the representation.  As computed in the proof of Theorem 3.4 of \cite{FGKP}, we have 
\[ t_\lambda t_\lambda^* (\chi_X) = \chi_{R_\lambda}.\]
Therefore, our hypothesis that $\chi_X$ is a cyclic vector implies that for any $f \in L^2(X, \mathcal F, \mu)$, there is a sequence $(f_j)_j$, with $f_j \in \text{span} \{ \chi_{R_\lambda}: \lambda \in \Lambda\}$, such that 
\[\lim_{j\to \infty}  \int_X | f_j - f|^2 \, d\mu =0.\]
In particular, $(f_j) \to f$ in measure.  

For any $\sigma$-algebra $\mathcal T$, standard measure-theoretic results \cite[Proposition 6]{Moore} imply that since $\mu(X) < \infty$, convergence in measure among $\mathcal T$-a.e.~finite measurable functions on $(X, \mathcal T, \mu)$ is metrized by the distance 
\[ d_{\mathcal T} (f, g) :=  \int_{\Omega} \frac{ | f-g| }{  1+ | f-g| } d\mu.\] 
Moreover, $d_{\mathcal T}$ makes the space of $\mathcal S$-a.e.~finite measurable functions into a complete metric space (this can be seen, for example, by combining
 Proposition 1 and Corollary 7 of \cite{Moore}). 

The fact that $(f_j)_j \to f$ in measure in $(X, \mathcal F, \mu)$, and that  $f_j \in L^2(X, \sigma(\mathcal R), \mu)$ for all $j$, implies that $(f_j)_j$ is a Cauchy sequence with respect to both $d_{\mathcal F}$ and $d_{\sigma(\mathcal R)}$.  Consequently, the limit $f$ of $(f_j)_j$ must also be a $\sigma(\mathcal R)$-a.e.~finite measurable function.  In other words, every $f \in L^2(X, \mathcal F, \mu)$ is in fact in $L^2(X, \sigma(\mathcal R), \mu)$.  Since $\mathcal R \subseteq \mathcal F$ by construction we must have $\sigma(\mathcal R) = \mathcal F$, as desired.

For the converse, assume $\sigma(\mathcal R) = \mathcal F$.
We begin by observing  that
$$
\tilde{\mathcal{R}} := \{ \text{finite unions of elements in $\mathcal{R}$} \}
$$ is a subalgebra of $ \mathcal{P}(X)$ -- that is, closed under finite unions and complements. 
Closure under finite unions follows from the definition, while the second claim follows from  Equation \eqref{eq-partition}. 
Moreover, $\sigma(\mathcal{R})= \sigma(\mathcal{\tilde{R}}) = \mathcal F,$ so 
\[
{\tilde{\mathcal{S}}}:=  \Big\{ \sum_i^{n} a_{i} \chi_{{B_i}}  \ | \ \ n \in \N , B_i \in \sigma(\mathcal{\tilde{R}}), a_i \in \C\Big\}
\]
is dense in  $L^2(X, \mathcal F, \mu)$. 
Therefore, the Carath\'eodory/Kolmogorov extension theorem implies  that the measure $\mu|_{\mathcal{\tilde{R}}}$ restricted to $\tilde{\mathcal{R}}$ induces a unique (extended) measure on $\mathcal{F} = \sigma(\mathcal R)$, which we still call $\mu$. (This is indeed the original measure on $L^2(X,\mathcal{F}, \mu)$ by the uniqueness of the extension.)


To show that the vector 
	 $\chi_X$ is monic, equivalently that the set $\mathcal{S}$ is dense in $L^2(X, \mathcal{F}, \mu)$ equipped with the usual metric $ d_{L^2(X, \mathcal{F}, \mu) }$ coming from the $L^2$ norm, 
	we invoke a standard fact about metric spaces: if $(Q,d_Q)$ is a metric space, and if $\tilde{\Sigma} \subseteq Q$  is a dense subset of $(Q,d_Q)$, then any other subset ${\Sigma} \subseteq Q$ having the property
	\[
	\forall \epsilon >0, \ \forall \tilde{x}\in \tilde{\Sigma} ,\ \exists \  x_\epsilon \in {\Sigma} \text{ with } d_Q(\tilde{x}, x_\epsilon ) <\epsilon
	\]
	is also dense in $(Q,d_Q)$.
	We wish  to apply this fact in the setting where
	\[
	(Q,d_Q) = (L^2(X, \mathcal{F}, \mu), d_{L^2(X, \mathcal{F}, \mu) }),\text{ with } \tilde{\Sigma} = \tilde{\mathcal{S}},\ {\Sigma} = {\mathcal{S}}.
	\]
	
Choose $\tilde s \in\tilde{ \mathcal S}$ and fix $\epsilon > 0$.  Without loss of generality we can assume 
	$
\tilde{s}=  \sum_i^{n} a_{i} \chi_{{B_i}} $ for some $ n \in \N , B_i \in \sigma(\mathcal{\tilde{R}}), a_i \not=0 $.
Define
	$
	A:=\sum_i^{n} |a_{i} |\in \R.
	$
	The Carath\'eodory/Kolmogorov extension theorem\footnote{See \cite{BDurrett} Page 452, Appendix: Measure theory, Exercise 3.1.} also guarantees that  for any $\tilde{B} \in \sigma(\mathcal{ \tilde{\mathcal{R}}} ) = \mathcal F$ and for any $\epsilon >0,$ there exists $ A_\epsilon^{\tilde{B}}\in \mathcal{ \tilde{\mathcal{R}}} $ with 
	\[
	\mu\Big( \tilde{B} \Delta  A_\epsilon^{\tilde{B}} \Big) < \epsilon,
	\]	
	where $\Delta$ denotes symmetric difference.
In other words, for each $i$, there exists  $A_i^\epsilon \in \tilde{\mathcal{R}} $ such that
	\[
	\mu(B_i \Delta A_i^\epsilon   )<\frac{\epsilon^2}{A^2},\text{ or, equivalently,  }\Big(\mu(B_i \Delta A_i^\epsilon   )\Big)^{1/2}<\frac{\epsilon}{A}.
	\]
Thus, setting  $
	s_\epsilon := \sum_i^{n} a_{i} \chi_{A_i^\epsilon} $
	and using the triangle inequality yields that $
	 d_{L^2(X, \mathcal{F}, \mu) }(\tilde{s}, 	s_\epsilon    ) <\epsilon,$
	as desired.
\end{proof}

\begin{rmk}
Using the characterization of monic representations from Theorem \ref{prop-conv-to-8-7}, it is straightforward to check that the $\Lambda$-semibranching function systems detailed in  \cite[Example 3.5 and Section 4]{FGJorKP} generate monic representations of $C^*(\Lambda)$.  Similarly, suppose $\Lambda = \Lambda_1 \times \Lambda_2$ is a product $k$-graph and we have $\Lambda_i$-semibranching function systems on measure spaces $(X_i, \mu_i)$ for $i=1,2$, such that the associated $\Lambda_i$-semibranching representations are monic.   Then Theorem \ref{prop-conv-to-8-7} combines with {\cite[Proposition 3.4]{FGJorKP}} to tell us that the product $\Lambda$-semibranching function system on $(X_1 \times X_2, \mu_1 \times \mu_2)$ also gives rise to a monic representation of $C^*(\Lambda)$.
\end{rmk}
 We now proceed to  analyze several other examples of representations arising from $\Lambda$-semibranching function systems and establish which ones are monic representations. 
\begin{example} 
\label{ex-exonevthreeed-monic}
We present here an example of a $\Lambda$-semibranching  representation on a 1-graph  that   is not monic. 
The $1$-graph $\Lambda$ has  two vertices $v_1$ and $v_2$ and three edges $f_1,f_2$ and $f_3$.
\[
\begin{tikzpicture}[scale=1.5]
 \node[inner sep=0.5pt, circle] (v) at (0,0) {$v_1$};
    \node[inner sep=0.5pt, circle] (w) at (1.5,0) {$v_2$};
    \draw[-latex, thick] (w) edge [out=50, in=-50, loop, min distance=30, looseness=2.5] (w);
    \draw[-latex, thick] (v) edge [out=130, in=230, loop, min distance=30, looseness=2.5] (v);
\draw[-latex, thick] (w) edge [out=150, in=30] (v);
\node at (-0.75, 0) {\color{black} $f_1$}; 
\node at (0.7, 0.45) {\color{black} $f_2$};
\node at (2.25, 0) {\color{black} $f_3$};
\end{tikzpicture}
\]
Let $X$ be the closed unit interval $[0,1]$ of $\R$ with the usual  Lebesgue $\sigma$-algebra and measure $\mu$. For $v_1$ and $v_2$, let $D_{v_1}=[0, \frac{1}{2}]$ and $D_{v_2}=(\frac{1}{2}, 1]$. Also for each edge $f\in \Lambda$, let $D_{f}=D_{s(f)}$, and hence $D_{f_1}=D_{v_1}=[0,\frac{1}{2}]$,  $D_{f_2}=D_{v_2}=(\frac{1}{2}, 1]$ and $D_{f_3}=D_{v_2}=(\frac{1}{2}, 1]$.
Now define prefixing maps for $f_1, f_2$ and $f_3$ by
\[\begin{split}
\tau_{f_1}(x)=-\frac{1}{2}x+\frac{1}{2}\quad\quad &\text{for $x\in D_{f_1}=\big[0,\frac{1}{2}\big]$},\\
\tau_{f_2}(x)=-\frac{1}{2}x+\frac{1}{2}\quad\quad&\text{for $x\in D_{f_2}=\big(\frac{1}{2},1\big]$},\\
\tau_{f_3}(x)=x\quad\quad &\text{for $x\in D_{f_3}=\big(\frac{1}{2},1\big]$}.
\end{split}\]
Then $R_{f_1}=\big[\frac{1}{4},\frac{1}{2}\big]$, $R_{f_2}=\big[0,\frac{1}{4}\big)$ and $R_{f_3}=\big(\frac{1}{2},1\big]$. Then the ranges of the prefixing maps are mutually disjoint and $X=R_{f_1}\cup R_{f_2}\cup R_{f_3}$.
For each $f_i$, since Lebesgue measure is regular, the Radon-Nikodym derivative of $\tau_{f_i}$ 
is given by
\begin{equation*}
\Phi_{{f_i}}(x) = \inf_{x \in E \subseteq D_{f_i}} \frac{(\mu\circ \tau_{f_i})(E)}{\mu(E)}\\
= \inf_{x \in E \subseteq D_{f_i}}\left\{ \begin{array}{cl}\frac{\frac{1}{2} \mu(E)}{\mu(E)}, & i=1,2, \\
\frac{\mu(E)}{\mu(E)}, & i=3
\end{array}\right.
\end{equation*}
\begin{equation*}
  = \left\{ \begin{array}{cl} \frac{1}{2}, & i =1,2 \\
1, & i=3.
\end{array}\right.
\end{equation*}
Now define $\tau^1:X\to X$ by
\[
\tau^1(x)=\begin{cases} \tau^{-1}_{f_1}(x)\quad\text{for $x\in R_{f_1}$}\\
\tau^{-1}_{f_2}(x)\quad\text{for $x\in R_{f_2}$} \\
\tau^{-1}_{f_3}(x)\quad\text{for $x\in R_{f_3}$} \end{cases}
\]
Since the sets $R_{f_i}$ are mutually disjoint, $\tau^1$ is well defined on $X$.
Then $\tau^1$ is the coding map satisfying $\tau^1\circ \tau_{f_i}(x)=x$ for all $x\in D_{f_i}$. 

It is a straightforward calculation to check that $\{\tau_{f_i}:D_{f_i}\to R_{f_i}, i=1,2,3\}$ is a semibranching function system for $(X,\mu)$. 
 To see that this this $\Lambda$-semibranching function system does not give rise to a monic representation, we argue by contradiction.  First, observe that the only finite paths with range $v_2$ are of the form $f_3 f_3 \cdots f_3$; and since $\tau_{f_3}(x)=x$ on $D_3=(1/2, 1]$, we have
\[R_{f_3} = R_{f_3 f_3 \cdots f_3} = (1/2, 1].\]
Every other finite path $\lambda$, having range $v_1$, will satisfy $R_\lambda \subseteq D_{v_1} = [0, 1/2]$.

Consequently, $\mathcal R = \{R_\lambda\}_{\lambda\in \Lambda}$ does not generate the usual Lebesgue $\sigma$-algebra on $[0,1]$, even after modification by sets of measure zero, since the restriction of $\mathcal R$  to $(1/2, 1]$ contains no nontrivial measurable sets.  Theorem \ref{prop:range-sets-generate-sigma-alg-in-monic} therefore
implies that the representation of $C^*(\Lambda)$ associated to this $\Lambda$-semibranching function system is not monic, and {hence is not equivalent to any representation on $L^2(\Lambda^\infty,\mu)$ arising from a $\Lambda$-projective system.}

\end{example}  

\begin{rmk}
We observe that since monic representations are multiplicity free,  it is easy to construct further examples of non-monic representations by using direct sums of monic representations, see  \cite{arveson} page 54.
\end{rmk}

In order to describe the following example of a $\Lambda$-semibranching representation which is monic, we review the concept of a Markov measure (see \cite[Section 3.1]{dutkay-jorgensen-monic} {or \cite[Section 4.2]{FGJorKP}} for more details, or  \cite{bezuglyi-jorgensen} for Markov measures in a more general context).

\begin{defn}[Definition 3.1 of  \cite{dutkay-jorgensen-monic}]
		\label{def-Markov-measure} 
A {\em Markov measure} on the infinite path space  $\Lambda^\infty_{\mathcal{O}_N}$
\[
\Lambda^\infty_{\mathcal{O}_N}=\prod_{i=1}^\infty \Z_N=\{(i_1 i_2\dots)\,:\, i_n\in \Z_N,\;\; n=1,2,\dots\}.
\] of the Cuntz algebra  ${\mathcal{O}_N}$ is defined by a vector 
	$\lambda = (\lambda_0, \ldots, \lambda_{N-1}) $ and an $N \times N$ matrix $T$ such that $\lambda_i > 0$, $T_{i,j} > 0$ for all $i,j \in \Z_N,$ and if 
	$e = (1,1, \ldots ,1)^t$ then $\lambda T=\lambda $ and $Te=e$.
The Carath\'eodory/Kolmogorov extension theorem then implies that  there exists a unique   Borel measure $\mu$ on $\Lambda^\infty_{\mathcal{O}_N}$  extending the measure $\mu_{\mathcal{C}}$ defined on cylinder sets by:
\begin{equation}
\mu_{\mathcal{C}} (Z(I)) : = \lambda_{i_1} T_{i_1,i_2} \cdots T_{i_{n-1},i_n},\text{ if }I = i_1 \ldots i_n.
\label{eq-def-markov-measu-Cuntz}
\end{equation}
The extension  $\mu$ is called a \emph{Markov measure} on $\Lambda^\infty_{\mathcal{O}_N}$.
  \end{defn}

For $N=2,$  
given a number $ x \in (0,1)$, we can take  $T=T_x= \begin{pmatrix}
 x & (1-x) \\ (1-x)  & x 
\end{pmatrix}$, and  $\lambda=(1,1)$. The resulting measure will in this case be called $\mu_x$. 
Moreover, if $x \not= x'$, Theorem 3.9 of \cite{dutkay-jorgensen-monic} guarantees that $\mu_x, \mu_{x'}$ are mutually singular.

\begin{example}
\label{ex:markov-is-monic}
We now consider an example of    $\Lambda$-semibranching function system which does give rise to a monic representation. 
Let $\Lambda$ be the 2-graph
below.
\[
\begin{tikzpicture}[scale=1.7]
 \node[inner sep=0.5pt, circle] (v) at (0,0) {$v$};
\draw[-latex, blue, thick] (v) edge [out=140, in=190, loop, min distance=15, looseness=2.5] (v);
\draw[-latex, blue, thick] (v) edge [out=120, in=210, loop, min distance=40, looseness=2.5] (v);
\draw[-latex, red, thick, dashed] (v) edge [out=-30, in=60, loop, min distance=30, looseness=2.5] (v);
\node at (-0.6, 0.1) {$f_1$}; \node at (-1,0.3) {$f_2$}; \node at (0.75,0.15) {$e$};
\end{tikzpicture}
\]
{
Recall from Remark \ref{rmk:rainbow} that every infinite path in a $2$-graph can be uniquely written as an  infinite string of composable edges which alternate in color: red, blue, red, \ldots.}
It follows that the infinite path space of the above 2-graph is homeomorphic to $\Lambda^\infty_{\mathcal{O}_2} \cong \prod_{i=1}^\infty \Z_2$ via the identification 
\[ e f_{j_1} e f_{j_2} e f_{j_3} \cdots \mapsto j_1 j_2 j_3 \cdots.\]
Therefore, the measure $\mu_x$ described above can be viewed as a measure  on $\Lambda^\infty$.



 It is straightforward to check that, as operators on $L^2(\Lambda^\infty, \mu_{x})$, the prefixing operators  $\sigma_e,\sigma_{f_1},\sigma_{f_2}$ have positive Radon-Nikodym derivatives at any point $z \in \Lambda^\infty$.
Consequently, the standard prefixing and coding maps make $(\Lambda^\infty, \mu_x)$ into a $\Lambda$-semibranching function system.  The associated representation of $C^*(\Lambda)$ is therefore monic, by Theorems \ref{thm-characterization-monic-repres} and Theorem~\ref{thm:SBFS-repn}, and Example \ref{ex:standard-SBFS-is-proj}.


\end{example}

\section{A universal representation for non-negative $\Lambda$-projective systems} 
\label{sec:univ_repn}

The focus of this section is the construction of a \lq universal representation' of $C^*(\Lambda)$, generalizing the work of \cite[Section 4]{dutkay-jorgensen-monic} for the  Cuntz algebra setting, such that  every non-negative monic representation of $C^*(\Lambda)$ is a sub-representation of the universal representation.  
The Hilbert space $\mathcal{H}(\Lambda^\infty)$ on which our universal representation is defined is
the \lq universal space' for representations of $C(\Lambda^\infty)$, see \cite{nelson}, and also  \cite{dutkay-jorgensen-monic,bezuglyi-jorgensen-infinite,alpay-jorgensen-Lew,palle-iterated}.
For the case of $\mathcal{O}_N,$ this space was also shown to be the  \lq universal representation space'
for monic representations in \cite{dutkay-jorgensen-monic}.  We recall the construction of $\mathcal{H}(\Lambda^\infty)$ below.

\begin{defn} 
\label{def-universal-Hilbert-space}
Let $\Lambda$ be a finite 
 $k$-graph with no sources, and let $\Lambda^\infty$ be the infinite path space of $\Lambda,$ endowed with the topology generated by the cylinder sets and the Borel $\sigma$-algebra associated to it.
 
  Consider the collection of pairs $(f, \mu)$, where $\mu$ is a Borel measure on $\Lambda^\infty$, and  $f\in L^2(\Lambda^\infty,\mu)$. 
 We say that two pairs $(f,\mu)$ and $(g,\nu)$ are equivalent, denoted by $(f,\mu)\sim (g,\nu)$, if there exists a finite Borel measure $m$ on $\Lambda^\infty$ such that
\[
\mu << m,\ \nu << m,\ \hbox{ and }\;\; f \sqrt{\frac{d\mu}{dm}} = g \sqrt{\frac{d\nu}{dm}} \;\; \text{ in } \, L^2(\Lambda^\infty, m).
\]

We write $f \, \sqrt{d\mu}$ for the equivalence class of $(f,\mu)$.

\end{defn}

Proposition~8.3 of \cite{bezuglyi-jorgensen-infinite} establishes that $\mathcal{H}(\Lambda^\infty)$ is a Hilbert space, with the vector space structure given by scalar multiplication and 
\[  f\,\sqrt{d\mu} +  g\,\sqrt{d\nu} := \Big( f \sqrt{\frac{d\mu}{d(\mu + \nu)}}+g \sqrt{\frac{d\nu}{d( \mu + \nu)}}\Big) \sqrt{{d(\mu + \nu)}},
\]
and the inner product given by 
\begin{equation}
\label{eq:Hilbert-sp}
\langle f\,\sqrt{d\mu} ,  g\,\sqrt{d\nu}\rangle := \int_{\Lambda^\infty} \overline{f} g\, 
\Big( \sqrt{\frac{d\mu}{d(\mu + \nu)}}\ \sqrt{\frac{d\nu}{d(\mu + \nu) }}\Big) {{d(\mu + \nu)}}.
\end{equation}
We  call $\mathcal{H}(\Lambda^\infty)$ the \emph{universal Hilbert space}  for $\Lambda^\infty$.


The following fundamental property of $\mathcal{H}(\Lambda^\infty)$ justifies the name `universal Hilbert space.' 

\begin{prop}(\cite[Theorem~3.1]{palle-iterated}, \cite{dutkay-jorgensen-monic}, \cite{alpay-jorgensen-Lew})
\label{prop-dutkay-jorgensen-lemma-4.6}
Let $\Lambda$ be a finite $k$-graph with no sources.
For every finite Borel measure $\mu$ on $\Lambda^\infty$, define $W_\mu:L^2(\Lambda^\infty, \mu) \to \mathcal{H}(\Lambda^\infty)$ by
$
W_\mu(f) = f \sqrt{d\mu}. 
$
Then $W_\mu$ is an isometry of $L^2(\Lambda^\infty, \mu)$ onto a subspace of $\mathcal{H}(\Lambda^\infty)$, which we call $\mathcal{L}^2(\mu)$. 
\end{prop}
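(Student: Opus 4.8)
The plan is to verify three things: that $W_\mu$ is a well-defined linear map into $\mathcal{H}(\Lambda^\infty)$, that it preserves inner products, and that its image is a closed subspace. Well-definedness on the level of pairs is immediate, since $f \in L^2(\Lambda^\infty, \mu)$ means $(f, \mu)$ is an admissible pair and hence $f\sqrt{d\mu}$ is a genuine element of $\mathcal{H}(\Lambda^\infty)$. The only point requiring care is that every computation must be carried out at the level of equivalence classes, using the inner product \eqref{eq:Hilbert-sp} and the vector-space operations recorded just before the statement. The key simplification throughout is that when both measures appearing in a pairing equal $\mu$, the dominating measure $\mu+\mu = 2\mu$ renders every Radon--Nikodym derivative constant.

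For the isometry, I would compute $\langle W_\mu(f), W_\mu(g)\rangle$ directly from \eqref{eq:Hilbert-sp} with $\nu = \mu$. Then $\mu + \nu = 2\mu$ and $\frac{d\mu}{d(2\mu)} = \tfrac{1}{2}$ $\mu$-a.e., so the factor $\sqrt{\frac{d\mu}{d(\mu+\nu)}}\sqrt{\frac{d\nu}{d(\mu+\nu)}}$ equals $\tfrac12$ while $d(\mu+\nu) = 2\,d\mu$; these cancel to give $\langle W_\mu(f), W_\mu(g)\rangle = \int_{\Lambda^\infty} \overline{f}g\, d\mu = \langle f, g\rangle_{L^2(\mu)}$. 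Linearity follows the same pattern: scalar multiplication is immediate from $a(f\sqrt{d\mu}) = (af)\sqrt{d\mu}$, while for additivity I would apply the displayed sum formula with $\nu = \mu$ to obtain $f\sqrt{d\mu} + g\sqrt{d\mu} = ((f+g)\sqrt{\tfrac12})\sqrt{d(2\mu)}$, and then check that the pair $((f+g)\sqrt{\tfrac12},\, 2\mu)$ is equivalent to $(f+g,\, \mu)$ by taking $m = 2\mu$ in Definition \ref{def-universal-Hilbert-space}.

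With linearity and the isometry established, the image $\mathcal{L}^2(\mu) := W_\mu(L^2(\Lambda^\infty,\mu))$ is automatically a linear subspace; and since $L^2(\Lambda^\infty,\mu)$ is complete and $W_\mu$ is isometric, $\mathcal{L}^2(\mu)$ is complete and hence closed in the Hilbert space $\mathcal{H}(\Lambda^\infty)$, whose completeness is Proposition~8.3 of \cite{bezuglyi-jorgensen-infinite}. I do not anticipate a serious obstacle here; the only delicate point is the bookkeeping with equivalence classes, specifically confirming that the particular representatives $(f,\mu)$ and $(g,\mu)$ may be substituted into \eqref{eq:Hilbert-sp}. This is legitimate precisely because that inner product is independent of the chosen representatives, which is part of the cited fact that $\mathcal{H}(\Lambda^\infty)$ is a well-defined Hilbert space.
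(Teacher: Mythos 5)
Your proof is correct. Note that the paper itself gives no argument for this Proposition --- it is quoted with citations to \cite{palle-iterated}, \cite{dutkay-jorgensen-monic}, and \cite{alpay-jorgensen-Lew} --- so there is no internal proof to compare against; your direct verification (taking $\nu=\mu$ so that $\mu+\nu=2\mu$ and both Radon--Nikodym factors equal $\tfrac12$, then checking additivity via the equivalence $(\tfrac{1}{\sqrt2}(f+g),\,2\mu)\sim(f+g,\,\mu)$ with $m=2\mu$, and closedness of the image from completeness of $L^2(\Lambda^\infty,\mu)$) is exactly the standard argument in those references and fills the gap cleanly.
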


We are now ready to present the universal representation $\pi_{univ}$  of $C^*(\Lambda)$ on $\mathcal{H}(\Lambda^\infty)$.

\begin{prop}
\label{prop-univ-repres}
\label{univ-repres-definition-result}
Let $\Lambda$ be a finite  $k$-graph with no sources. Fix $(f,\mu)\in \mathcal{H}(\Lambda^\infty)$.  For each $\lambda \in \Lambda^n$, define  $S^{univ} \in B(\mathcal H(\Lambda^\infty))$ by 
\[
 S_\lambda^{univ} (f\,\sqrt{d\mu}) : = (f \circ \sigma^n) \,\sqrt{d(\mu \circ \sigma_\lambda^{-1})},
\]
where $\sigma_\lambda$ and $\sigma^n$ are the standard prefixing  and coding maps  
of Definition \ref{def:infinite-path}.
Then:
\begin{itemize}
\item[(a)] The adjoint of $S_\lambda^{univ}$ is given by
$
 (S_\lambda^{univ})^* (f\,\sqrt{d\mu}) : = (f \circ \sigma_\lambda) \,\sqrt{d(\mu \circ \sigma_\lambda)}.
$
\item[(b)] The operators $\{S_\lambda^{univ}: \lambda \in \Lambda\}$ generate a representation $\pi_{univ}$ of $C^*(\Lambda)$ on $\H(\Lambda^\infty)$, which we call the \lq universal  representation'. 
\item[(c)] The projection valued measure $P$ on $\Lambda^\infty$ given in Definition \ref{def-proj-val-measu} associated to  the universal representation $\pi_{univ}$ is
given by:
\begin{equation}
\label{proj-valued-measure-universal-representation}
P(A)(f\,\sqrt{d\mu}) = (\chi_A \cdot f)\,\sqrt{d\mu} ,
\end{equation}
where $A$ is a Borel set of the Borel $\sigma$-algebra generated by the cylinder sets.
\end{itemize}
\end{prop}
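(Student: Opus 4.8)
The plan is to treat the three parts in the order (a), (b), (c): the adjoint formula of (a) is the engine that drives the Cuntz--Krieger computations in (b), and (c) then falls out of the (CK4)-type computation from (b) together with uniqueness of the projection-valued measure. Throughout I would first record two elementary facts about the standard maps of Definition \ref{def:infinite-path}: the relations $\sigma^n \circ \sigma_\lambda = \id$ on the domain $Z(s(\lambda))$ of $\sigma_\lambda$ and $\sigma_\lambda \circ \sigma^n = \id$ on $Z(\lambda)$ (for $\lambda \in \Lambda^n$), and the change-of-variables identity $\int \phi \, d(\rho \circ \sigma_\lambda^{-1}) = \int (\phi \circ \sigma_\lambda)\, d\rho$ for a Borel measure $\rho$, together with its pullback counterpart. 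I would also note that $\mu \circ \sigma_\lambda^{-1}$ is supported on $Z(\lambda)$ while $\mu \circ \sigma_\lambda$ is supported on $Z(s(\lambda))$. A preliminary step is to check that $S_\lambda^{univ}$ and the proposed adjoint are well defined on equivalence classes $f\sqrt{d\mu}$ and bounded; boundedness follows from the computation $\|S_\lambda^{univ}(f\sqrt{d\mu})\|^2 = \int |f \circ \sigma^n|^2 \, d(\mu \circ \sigma_\lambda^{-1}) = \int_{Z(s(\lambda))} |f|^2 \, d\mu \le \|f\sqrt{d\mu}\|^2$.

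For part (a), I would verify directly that the operator $T(g\sqrt{d\nu}) := (g\circ\sigma_\lambda)\sqrt{d(\nu\circ\sigma_\lambda)}$ satisfies $\langle S_\lambda^{univ}(f\sqrt{d\mu}), g\sqrt{d\nu}\rangle = \langle f\sqrt{d\mu}, T(g\sqrt{d\nu})\rangle$ for all representatives. The cleanest bookkeeping is to expand each side via \eqref{eq:Hilbert-sp}: writing $\alpha = \mu\circ\sigma_\lambda^{-1}$ (supported on $Z(\lambda)$) and $\beta = \nu\circ\sigma_\lambda$ (supported on $Z(s(\lambda))$), the left side becomes an integral over $Z(\lambda)$ in the variable $x$ with integrand $\overline{f(\sigma^n(x))}\,g(x)$ times the square-rooted Radon--Nikodym factors, while the right side is an integral over $Z(s(\lambda))$ in the variable $y$ with integrand $\overline{f(y)}\,g(\sigma_\lambda(y))$ times the corresponding factors. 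The substitution $x = \sigma_\lambda(y)$, under which $\sigma^n(x) = y$, matches the function parts exactly; the remaining task is to check that the Radon--Nikodym factors transform correctly, which follows from the transformation rule for Radon--Nikodym derivatives under the bi-measurable bijection $\sigma_\lambda : Z(s(\lambda)) \to Z(\lambda)$ (so that, for compatible dominating measures $m, m'$, one has $\tfrac{d\alpha}{dm}\circ\sigma_\lambda = \tfrac{d\mu}{dm'}$ on $Z(s(\lambda))$, and similarly for $\beta$ and $\nu$). This change-of-variables argument, keeping track of the two different supports, is the main obstacle of the whole proof.

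With the adjoint in hand, part (b) is a sequence of direct verifications of (CK1)--(CK4). For (CK1), since $\sigma_v = \id$ and $\sigma^0 = \id$ for $v\in\Lambda^0$, one gets $S_v^{univ}(f\sqrt{d\mu}) = f\sqrt{d(\mu|_{Z(v)})} = (\chi_{Z(v)} f)\sqrt{d\mu}$, i.e.\ multiplication by $\chi_{Z(v)}$; mutual orthogonality is immediate from disjointness of the $Z(v)$. Relation (CK2) reduces to $\sigma^{d(\eta)}\circ\sigma^{d(\lambda)} = \sigma^{d(\lambda\eta)}$ and $\sigma_\lambda\circ\sigma_\eta = \sigma_{\lambda\eta}$ for composable $\lambda,\eta$, which are exactly the coding/prefixing compatibility conditions, yielding $(\mu\circ\sigma_\eta^{-1})\circ\sigma_\lambda^{-1} = \mu\circ\sigma_{\lambda\eta}^{-1}$. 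Using the adjoint formula, (CK3) follows from $\sigma^n\circ\sigma_\lambda = \id$ and the pullback-then-pushforward identity $(\mu\circ\sigma_\lambda^{-1})\circ\sigma_\lambda = \mu|_{Z(s(\lambda))}$, giving $(S_\lambda^{univ})^*S_\lambda^{univ} = S_{s(\lambda)}^{univ}$; dually, $S_\lambda^{univ}(S_\lambda^{univ})^*$ is multiplication by $\chi_{Z(\lambda)}$, and since $Z(v) = \bigsqcup_{\lambda\in v\Lambda^n} Z(\lambda)$, summing over $\lambda \in v\Lambda^n$ gives (CK4). Each $S_\lambda^{univ}$ is nonzero because $S_\lambda^{univ}(S_\lambda^{univ})^* = M_{\chi_{Z(\lambda)}} \ne 0$, so the universal property of $C^*(\Lambda)$ produces the representation $\pi_{univ}$.

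Finally, part (c) is a corollary of the (CK4) computation. By Definition \ref{def-proj-val-measu}, $P(Z(\lambda)) = S_\lambda^{univ}(S_\lambda^{univ})^*$, which was just identified with the multiplication operator $f\sqrt{d\mu}\mapsto (\chi_{Z(\lambda)} f)\sqrt{d\mu}$. I would then set $Q(A)(f\sqrt{d\mu}) := (\chi_A f)\sqrt{d\mu}$ for Borel $A$, check that $Q$ is a well-defined projection-valued measure on $\mathcal B_o(\Lambda^\infty)$ (well-definedness on equivalence classes and strong countable additivity are routine), and note that $Q$ agrees with $P$ on every cylinder set $Z(\lambda)$. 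Since Proposition \ref{conj-palle-proj-valued-measure-gen-case} guarantees that $P$ is the unique projection-valued measure extending its values on cylinder sets, we conclude $P = Q$, which is exactly \eqref{proj-valued-measure-universal-representation}.
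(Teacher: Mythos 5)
Your proposal is correct and follows essentially the same route as the paper's proof: well-definedness of $S_\lambda^{univ}$ on equivalence classes using the support of $\mu\circ\sigma_\lambda^{-1}$ and $\sigma_\lambda^{-1}=\sigma^{d(\lambda)}$ on $Z(\lambda)$, the adjoint formula via the same change-of-variables computation in the inner product, the Cuntz--Krieger relations by direct verification (which the paper merely declares ``analogous to Proposition \ref{prop:lambda-proj-repn}'' but which you usefully spell out), and part (c) from the identification of $S_\lambda^{univ}(S_\lambda^{univ})^*$ with multiplication by $\chi_{Z(\lambda)}$ together with the fact that cylinder sets generate the Borel $\sigma$-algebra. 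No gaps.
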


\begin{proof}
The proof is similar to that of  Proposition 4.2 of   \cite{dutkay-jorgensen-monic}, although the details are more involved because of the more complicated $k$-graph structure. 
To simplify the notation, in this proof we will  drop the superscript $univ$ from $S^{univ}_\lambda$. 
To  check that $\{S_\lambda^{univ}\}_{\lambda \in \Lambda}$ gives a representation of $C^*(\Lambda)$,
first we observe that the operators $S_\lambda$ are well defined; in other words, if $f \sqrt{d\mu} = g \sqrt{d\nu}$, we must have $(f \circ \sigma^n) \sqrt{d(\mu \circ \sigma_\lambda^{-1})} = (g \circ \sigma^n)\sqrt{d(\nu \circ \sigma_\lambda^{-1})}$ for all $\lambda \in \Lambda$.

Suppose that  $f \sqrt{d\mu} = g \sqrt{d\nu}$. 
{
Observe that $\mu \circ \sigma_\lambda^{-1}$ is zero off $Z(\lambda)$, and $\mu \circ \sigma_\lambda^{-1} =  \left(m|_{Z(s(\lambda))} \circ \sigma_\lambda^{-1}\right)$ on $Z(\lambda)$. 
The fact that $\sigma_\lambda^{-1} = \sigma^{d(\lambda)}$ on $Z(\lambda)$ now implies that, if $n := d(\lambda)$,
\begin{align*}
&(f \circ \sigma^n) \sqrt{d(\mu \circ \sigma_\lambda^{-1})} = \left( (f \circ \sigma^n) \sqrt{d(\mu \circ \sigma_\lambda^{-1})}\right)|_{Z(\lambda)} = \left(f \sqrt{d\mu}\right) |_{Z(s(\lambda))} \circ \sigma_\lambda^{-1} \\
&= \left(g \sqrt{d\nu}\right) |_{Z(s(\lambda))} \circ \sigma_\lambda^{-1}= \left( (g\circ \sigma^n) \sqrt{d(\nu \circ \sigma_\lambda^{-1})}\right)|_{Z(\lambda)}= (g\circ \sigma^n) \sqrt{d(\nu \circ \sigma_\lambda^{-1})}.
\end{align*}
It follows that $S_\lambda$ is well defined.}

To check the formula for $S_\lambda^*$ given in the statement of the proposition, we compute:
\begin{align*}
&\langle S_\lambda^* (f \sqrt{d\mu}),  g \sqrt{d\nu}\rangle = \langle f \sqrt{d\mu}, S_\lambda g \sqrt{d\nu} \rangle = \langle f \sqrt{d\mu}, (g \circ \sigma^n) \sqrt{d(\nu \circ \sigma_\lambda^{-1})} \rangle \\
&= \int_{\Lambda^\infty} \overline{f(x)} ( g \circ \sigma^n)(x) \sqrt{\frac{d\mu}{d(\mu + (\nu \circ \sigma_\lambda^{-1}))}} \sqrt{\frac{d(\nu \circ \sigma_\lambda^{-1})}{d(\mu + (\nu \circ \sigma_\lambda^{-1}))}}\, d(\mu + (\nu \circ \sigma_\lambda^{-1})).
\end{align*}
This integral vanishes off $Z(\lambda)$, since $\sigma_\lambda^{-1}$ (and consequently $d(\nu \circ \sigma_\lambda^{-1})$) do.  We thus use the fact that $(\nu \circ \sigma_\lambda^{-1})|_{Z(\lambda)} = \nu|_{Z(s(\lambda))} \circ \sigma_\lambda^{-1}$ to rewrite 
\begin{align*}
\langle S_\lambda^* (f \sqrt{d\mu}), g \sqrt{d\nu}\rangle &= \int_{Z(s(\lambda))} \overline{f \circ \sigma_\lambda(x)} g(x) \sqrt{\frac{d(\mu \circ \sigma_\lambda)}{d((\mu \circ \sigma_\lambda) + \nu)}} \sqrt{\frac{d\nu}{d((\mu \circ \sigma_\lambda)+ \nu)}} d((\mu \circ \sigma_\lambda) + \nu).
\end{align*}
Hence $S_\lambda^*(f\sqrt{d\mu}) = (f \circ \sigma_\lambda) \sqrt{d(\mu \circ \sigma_\lambda)}$, which proves (a).

{
Checking condition (b), that the operators $\{S_\lambda\}$ give a representation of $C^*(\Lambda)$, is a straightforward computation, analogous to the proof of Proposition \ref{prop:lambda-proj-repn}.}

To see (c), note that Equation \eqref{proj-valued-measure-universal-representation} follows from the observation that $S_\nu S_\nu^*$ acts by multiplication by $\chi_{Z(\nu)}$; the fact that disjoint unions of cylinder sets $Z(\nu)$ generate the $\sigma$-algebra up to sets of measure zero \cite[Lemma 4.1]{FGKP} therefore enables us  to compute $P(A)$ by linearity, for any Borel set $A$.
\end{proof}

The following two Propositions, which detail additional technical properties of the projection valued measure associated to $\pi_{univ}$,  will be used  in the proof of Theorem \ref{prop-universal-name}, the main result of this Section.

\begin{prop} 
\label{prop-commutant-dutkay-jorgensen-lemma-4.4}
Let $\Lambda$ be a finite $k$-graph with no sources and let $\mathcal{H}(\Lambda^\infty)$ be the Hilbert space described in Definition~\ref{def-universal-Hilbert-space}, and let $\pi_{univ}=  \{S_\lambda^{univ}: \lambda \in \Lambda\}$ be the universal representation of $C^*(\Lambda)$ on $\mathcal{H}(\Lambda^\infty)$ given in Proposition~\ref{prop-univ-repres}.

\begin{itemize}
\item[(a)] For $y\in \mathcal{H}(\Lambda^\infty)$, define a function $\nu_y$ on $\Lambda^\infty$ by
\[
\nu_{y} (Z(\lambda) ) : =  \langle (S^{univ}_\lambda (S^{univ}_\lambda)^*) y, y \rangle,
\]
where $\langle \cdot,\cdot\rangle$ is the inner product given on $\mathcal{H}(\Lambda^\infty)$ in Equation~\eqref{eq:Hilbert-sp}. Then $\nu_y$ gives a measure on $\Lambda^\infty$.

\item[(b)]  Let $T$ be a bounded operator on $\mathcal{H}(\Lambda^\infty)$.  If  $T$ commutes with $\pi_{univ}|_{C(\Lambda^\infty)}$,  then for any  $x\in \mathcal{H}(\Lambda^\infty)$ we have
\[
\nu_{T(x)} << \nu_{x}.
\]
\item[(c)] For every   vector
$f \sqrt{d \mu}\in \mathcal{H}(\Lambda^\infty)$, we have
$
 \nu_{f \sqrt{d \mu}} = |f|^2  \mu.
$
\end{itemize}
\end{prop}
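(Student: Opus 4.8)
The plan is to reduce all three statements to standard facts about the projection valued measure $P$ of Proposition \ref{prop-univ-repres}(c), exploiting the identity $S_\lambda^{univ}(S_\lambda^{univ})^* = P(Z(\lambda))$, which holds by Definition \ref{def-proj-val-measu} applied to the universal representation. For part (a), I would first rewrite $\nu_y(Z(\lambda)) = \langle P(Z(\lambda))y, y\rangle$. Since each $P(Z(\lambda))$ is a projection, $\langle P(Z(\lambda))y, y\rangle = \|P(Z(\lambda))y\|^2 \geq 0$. By Proposition \ref{conj-palle-proj-valued-measure-gen-case}, $P$ extends to a genuine projection valued measure on $\mathcal{B}_o(\Lambda^\infty)$, so I would \emph{define} $\nu_y(A) := \langle P(A)y, y\rangle$ for every Borel set $A$; this agrees with the given formula on cylinder sets. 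The assignment $A \mapsto \langle P(A)y, y\rangle$ is then a finite positive Borel measure by the usual spectral-theory argument: positivity is immediate, finiteness follows from $\nu_y(\Lambda^\infty) = \|y\|^2$, and countable additivity follows from the strong $\sigma$-additivity of $P$ together with continuity of the inner product. This gives (a).

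For part (c), I would evaluate $\nu_{f\sqrt{d\mu}}$ on cylinder sets. Using Proposition \ref{prop-univ-repres}(c), $P(Z(\lambda))(f\sqrt{d\mu}) = (\chi_{Z(\lambda)}f)\sqrt{d\mu}$, so $\nu_{f\sqrt{d\mu}}(Z(\lambda)) = \langle (\chi_{Z(\lambda)}f)\sqrt{d\mu}, f\sqrt{d\mu}\rangle$. The key simplification is that when the two measures coincide, formula \eqref{eq:Hilbert-sp} collapses to the ordinary inner product on $L^2(\Lambda^\infty,\mu)$, since $\frac{d\mu}{d(2\mu)} = \tfrac{1}{2}$ a.e.\ and $d(2\mu) = 2\,d\mu$, so the two square roots and the doubled measure combine to a single factor of $d\mu$. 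Hence $\nu_{f\sqrt{d\mu}}(Z(\lambda)) = \int_{\Lambda^\infty}\chi_{Z(\lambda)}|f|^2\,d\mu = (|f|^2\mu)(Z(\lambda))$. Since $\nu_{f\sqrt{d\mu}}$ and $|f|^2\mu$ are finite Borel measures agreeing on the cylinder sets, and the cylinder sets generate $\mathcal{B}_o(\Lambda^\infty)$ by \cite[Lemma 4.1]{FGKP}, the uniqueness part of the extension theorem gives $\nu_{f\sqrt{d\mu}} = |f|^2\mu$ on all Borel sets.

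For part (b), the crucial step---and the one I expect to be the main obstacle---is upgrading the hypothesis that $T$ commutes with $\pi_{univ}|_{C(\Lambda^\infty)}$ to the statement that $T$ commutes with every spectral projection $P(A)$. Because $\Lambda$ is finite, each cylinder set $Z(\lambda)$ is compact open, so $\chi_{Z(\lambda)} \in C(\Lambda^\infty)$ and $P(Z(\lambda)) = \pi_{univ}(\chi_{Z(\lambda)})$ lies in $\pi_{univ}(C(\Lambda^\infty))$; thus $T$ commutes with every $P(Z(\lambda))$. I would then show that $\mathcal{M} := \{A \in \mathcal{B}_o(\Lambda^\infty): TP(A) = P(A)T\}$ is a $\sigma$-algebra: it is closed under complements since $P(A^c) = I - P(A)$, under finite intersections since $P(A\cap B) = P(A)P(B)$ for a spectral measure, and under countable disjoint unions by the strong $\sigma$-additivity of $P$ and boundedness of $T$. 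As $\mathcal{M}$ contains all cylinder sets, $\mathcal{M} = \mathcal{B}_o(\Lambda^\infty)$. With commutation established, absolute continuity is immediate: writing $\nu_y(A) = \|P(A)y\|^2$, if $\nu_x(A) = 0$ then $P(A)x = 0$, whence $P(A)T(x) = TP(A)x = 0$ and $\nu_{T(x)}(A) = 0$. This proves $\nu_{T(x)} \ll \nu_x$.
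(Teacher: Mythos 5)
Your proposal is correct, and parts (a) and (c) run essentially parallel to the paper's proof (which disposes of (a) by analogy with Equation \eqref{eq:monic_measure} and of (c) by the same computation with $P(Z(\eta))$ acting as multiplication by $\chi_{Z(\eta)}$; your extra remarks about the inner product collapsing when the two measures agree, and about extending the identity from cylinder sets to all Borel sets by uniqueness of extension, are details the paper leaves implicit). Part (b) is where you genuinely diverge. The paper stays on cylinder sets: since $S^{univ}_\lambda (S^{univ}_\lambda)^* = \pi_{univ}(\chi_{Z(\lambda)})$ lies in $\pi_{univ}(C(\Lambda^\infty))$, commutation gives $\nu_{T(x)}(Z(\lambda)) = \langle (S^{univ}_\lambda (S^{univ}_\lambda)^*)x, T^*Tx\rangle$, and Cauchy--Schwarz yields $\nu_{T(x)}(Z(\lambda))^2 \leq \nu_x(Z(\lambda))\,\|T^*Tx\|^2$, so vanishing of $\nu_x$ on a cylinder set forces vanishing of $\nu_{T(x)}$ there; passing from cylinder sets to arbitrary Borel sets is left to the reader. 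You instead upgrade the hypothesis to commutation with every spectral projection $P(A)$ via the monotone-class argument on $\mathcal{M} = \{A : TP(A) = P(A)T\}$, and then read off absolute continuity from $\nu_x(A) = \|P(A)x\|^2 = 0 \Rightarrow P(A)Tx = TP(A)x = 0$. Your route costs a little more bookkeeping (multiplicativity of $P$, closure of $\mathcal{M}$ under the relevant operations) but buys the conclusion directly on all Borel sets, whereas the paper's inequality as stated only covers the generating cylinder sets; both arguments are sound.
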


\begin{proof} As in Equation \eqref{eq:monic_measure}, it is straightforward to see (a). For (b), fix $x\in \mathcal{H}(\Lambda^\infty)$. Then since $T$ commutes with $S^{univ}_\lambda$, we have
 \[
\nu_{T(x)}(Z(\lambda) ) =  \langle (S^{univ}_\lambda (S^{univ}_\lambda)^*)T(x), T(x) \rangle =  \langle (S^{univ}_\lambda (S^{univ}_\lambda)^*)x, T^* T (x) \rangle.
 \]
Since each $S^{univ}_\lambda$ is a partial isometry, the Cauchy-Schwarz inequality then gives
  \[
 \nu_{T(x)}(Z(\lambda) )^2 \leq  \Vert(S^{univ}_\lambda (S^{univ}_\lambda)^*)x\Vert^2 \ \Vert T^* T x \Vert^2 = \nu_{x}(Z(\lambda) )^2  \ \Vert T^* T x \Vert^2 ,
  \]
which gives that $\nu_T(x) << \nu_x$.

For (c), 
Equation \eqref{proj-valued-measure-universal-representation} implies that, for any cylinder set $Z(\eta)$, 
\[
\nu_{f \sqrt{d \mu}} (Z(\eta) )  =  \langle( {\chi_{Z(\eta)}} f)\,\sqrt{d\mu} , f \sqrt{d \mu} \rangle =\int \chi_{Z(\eta)} \cdot  |f|^2 d\mu \;  = \int_{Z(\eta)} |f|^2 \, d\mu.
\]
This
 gives the desired result. 
\end{proof}

We now present  an important result which will allow us to derive, in Theorem  \ref{prop-universal-name}, the desired universal property of the representation.

\begin{thm} 
\label{them-universal-representation-commutant} 
Let $\Lambda$ be a finite $k$-graph with no sources. Let $\mathcal{H}(\Lambda^\infty)$ be the universal Hilbert space  for $\Lambda^\infty$ and $\pi_{univ}$ be the universal representation of $C^*(\Lambda)$ on $\mathcal{H}(\Lambda^\infty)$. Then:
\begin{enumerate}
\item [(a)] An operator $T\in \mathcal{B}(\mathcal{H}(\Lambda^\infty))$ commutes with  $\pi_{univ}|_{C(\Lambda^\infty)}$ 
if and only if for each finite Borel  measure $ \mu $ on $\Lambda^\infty$ which arises from a monic representation of $C^*(\Lambda)$ as in Equation \eqref{eq:monic_measure}, 
there exists a function $F_\mu$ in ${L}^\infty(\Lambda^\infty, \mu)$ such that: 
\begin{enumerate}
\item[(i)]  $sup \{ \Vert F_\mu\Vert : \mu \text{ arises from a monic representation }\} < \infty$.
\item[(ii)]  If $\mu << \lambda$ then $F_\mu = F_\lambda$, $\mu$-a.e.
\item[(iii)]  $T(f\sqrt{d\mu}) = F_\mu f\sqrt{d\mu}$ for all $f\sqrt{d\mu} \in \mathcal{H}(\Lambda^\infty)$ 
\end{enumerate}
\item[(b)]  Let $\mathcal H$ denote the subspace of $\mathcal H(\Lambda^\infty)$ spanned by vectors of the form $f \sqrt{d\mu}$ 
where $\mu$ arises from a monic representation.  An operator $T\in \mathcal{B}(\mathcal{H}(\Lambda^\infty))$  commutes with $\pi_{univ}|_\mathcal H$ 
if and only if  for every finite Borel measure $\mu$ on $\Lambda^\infty$ arising from a monic representation of $C^*(\Lambda)$,  
and for each $\lambda \in \Lambda$, we have
\[
F_\mu =F_{\mu\circ \sigma_\lambda^{-1}} \circ \sigma_\lambda,\  \mu-a.e.
\]
\end{enumerate}
\end{thm}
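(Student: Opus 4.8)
The plan is to prove part (a) first and then bootstrap part (b) from it. The central structural fact is that, by Proposition~\ref{prop-univ-repres}(c), $\pi_{univ}|_{C(\Lambda^\infty)}$ acts by pointwise multiplication, $\pi_{univ}(\phi)(f\sqrt{d\mu}) = (\phi f)\sqrt{d\mu}$, and that by Proposition~\ref{prop-dutkay-jorgensen-lemma-4.6} each $W_\mu$ is a unitary of $L^2(\Lambda^\infty,\mu)$ onto $\mathcal{L}^2(\mu)\subseteq\mathcal H(\Lambda^\infty)$ intertwining the multiplication operator $M_\phi$ with $\pi_{univ}(\phi)|_{\mathcal{L}^2(\mu)}$. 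The ``if'' direction of (a) is then immediate: if $T$ acts on each $\mathcal{L}^2(\mu)$ as multiplication by $F_\mu$, condition (ii) guarantees this assignment is well defined on equivalence classes (comparing through $m=\mu+\nu$), and since scalar functions commute under pointwise multiplication, $T\pi_{univ}(\phi)$ and $\pi_{univ}(\phi)T$ both send $f\sqrt{d\mu}$ to $(F_\mu\,\phi\, f)\sqrt{d\mu}$.

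I expect the heart of the argument, and the main obstacle, to be the ``only if'' direction of (a): producing the multiplication functions $F_\mu$. Assume $T$ commutes with $\pi_{univ}|_{C(\Lambda^\infty)}$. The key preliminary step is to show that $T$ preserves each subspace $\mathcal{L}^2(\mu)$. For $x=f\sqrt{d\mu}$, Proposition~\ref{prop-commutant-dutkay-jorgensen-lemma-4.4}(b)--(c) give $\nu_{T(x)}\ll\nu_x=|f|^2\mu\ll\mu$; writing $T(x)=g\sqrt{d\rho}$ so that $\nu_{T(x)}=|g|^2\rho$ and setting $m=\mu+\rho$, this absolute continuity forces $g\sqrt{d\rho/dm}$ to vanish $m$-a.e.\ on $\{d\mu/dm=0\}$, whence $T(x)=h\sqrt{d\mu}$ with $h\in L^2(\mu)$. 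Hence $\tilde T_\mu:=W_\mu^* T|_{\mathcal{L}^2(\mu)}W_\mu$ is a bounded operator on $L^2(\mu)$, and the intertwining property of $W_\mu$ shows it commutes with $M_\phi$ for every $\phi\in C(\Lambda^\infty)$. Because $\Lambda^\infty$ is compact, $C(\Lambda^\infty)$ is weak-$*$ dense in $L^\infty(\mu)$, so these multiplication operators generate the maximal abelian von Neumann algebra $L^\infty(\mu)\subseteq B(L^2(\mu))$; as $\tilde T_\mu$ lies in its own commutant, $\tilde T_\mu=M_{F_\mu}$ for a unique $F_\mu\in L^\infty(\mu)$, which is exactly (iii). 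Conditions (i) and (ii) then follow with modest bookkeeping: (i) is $\|F_\mu\|_\infty=\|\tilde T_\mu\|\le\|T\|$, and for (ii), when $\mu\ll\lambda$ with $k=d\mu/d\lambda$, the identity $f\sqrt{d\mu}=(f\sqrt{k})\sqrt{d\lambda}$ in $\mathcal H(\Lambda^\infty)$ lets me evaluate $T$ two ways, namely $F_\mu f\sqrt{d\mu}$ and $F_\lambda(f\sqrt{k})\sqrt{d\lambda}=(F_\lambda f)\sqrt{d\mu}$; taking $f=1$ (permissible since $\mu$ is finite) yields $F_\mu=F_\lambda$ $\mu$-a.e.

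For part (b), I would first record that $\mathcal H$ is invariant under $\pi_{univ}$: for monic $\mu$ the proof of Theorem~\ref{thm-characterization-monic-repres} supplies $\mu\circ\sigma_\lambda^{-1}\ll\mu$ and $\mu\circ\sigma_\lambda\ll\mu$, so the vectors $S_\lambda^{univ}(f\sqrt{d\mu})$ and $(S_\lambda^{univ})^*(f\sqrt{d\mu})$ again lie in $\mathcal{L}^2(\mu)\subseteq\mathcal H$. Since $S_\lambda^{univ}(S_\lambda^{univ})^*=\pi_{univ}(\chi_{Z(\lambda)})$, commuting with $\pi_{univ}|_{\mathcal H}$ entails commuting with $\pi_{univ}(C(\Lambda^\infty))|_{\mathcal H}$, so the argument of part~(a), localized to $\mathcal H$, supplies multiplication functions $F_\rho$ for every measure $\rho$ with $\mathcal{L}^2(\rho)\subseteq\mathcal H$, in particular for each monic $\mu$ and for each $\mu\circ\sigma_\lambda^{-1}$ (non-monic though the latter may be).

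It then remains only to convert the single further relation $TS_\lambda^{univ}=S_\lambda^{univ}T$ into the cocycle identity. Using the defining formula for $S_\lambda^{univ}$ and (iii), with $n=d(\lambda)$, I compute
\[
TS_\lambda^{univ}(f\sqrt{d\mu})=F_{\mu\circ\sigma_\lambda^{-1}}\,(f\circ\sigma^n)\sqrt{d(\mu\circ\sigma_\lambda^{-1})},\qquad S_\lambda^{univ}T(f\sqrt{d\mu})=(F_\mu\circ\sigma^n)(f\circ\sigma^n)\sqrt{d(\mu\circ\sigma_\lambda^{-1})}.
\]
Equating these for all $f$ and taking $f=1$ forces $F_{\mu\circ\sigma_\lambda^{-1}}=F_\mu\circ\sigma^n$ $(\mu\circ\sigma_\lambda^{-1})$-a.e.\ on $Z(\lambda)$. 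Precomposing with the bijection $\sigma_\lambda:Z(s(\lambda))\to Z(\lambda)$ (whose inverse is $\sigma^n$, and which carries $\mu$-null sets to $(\mu\circ\sigma_\lambda^{-1})$-null sets) and using $\sigma^n\circ\sigma_\lambda=\id$ yields $F_\mu=F_{\mu\circ\sigma_\lambda^{-1}}\circ\sigma_\lambda$ $\mu$-a.e., as claimed; the converse is the same chain of equalities read in reverse. The recurring technical nuisance throughout is careful handling of the equivalence-class relation defining $\mathcal H(\Lambda^\infty)$ and the transfer of almost-everywhere identities between mutually absolutely continuous measures; the subtlety specific to (b) is justifying the invariance of $\mathcal H$ and tracking the relevant supports (everything of interest is concentrated on $Z(\lambda)$ and $Z(s(\lambda))$) when pulling identities back along $\sigma_\lambda$.
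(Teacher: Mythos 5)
Your proposal follows essentially the same route as the paper's proof: the preservation of each $\mathcal{L}^2(\mu)$ via Proposition \ref{prop-commutant-dutkay-jorgensen-lemma-4.4}, the maximal-abelian-subalgebra argument producing $F_\mu$, the same verifications of (i) and (ii), and the same computation converting $TS_\lambda^{univ}=S_\lambda^{univ}T$ into the cocycle identity. The one step you gloss over is in the converse of (b): reading your ``chain of equalities in reverse'' only yields $TS_\lambda^{univ}=S_\lambda^{univ}T$, which is not by itself commutation with the full $*$-representation $\pi_{univ}|_{\mathcal H}$ --- you still need $T(S_\lambda^{univ})^*=(S_\lambda^{univ})^*T$. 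The paper closes this by observing that $T$, being a multiplication operator on each $\mathcal L^2(\mu)$, is normal, so the Fuglede--Putnam theorem reduces commutation with $\pi_{univ}$ to commutation with the generators $S_\lambda^{univ}$ alone; you should add this remark (or verify the adjoint relation directly from the formula for $(S_\lambda^{univ})^*$ in Proposition \ref{prop-univ-repres}(a)). Your handling of the measures $\mu\circ\sigma_\lambda^{-1}$, which need not themselves arise from monic representations, is if anything slightly more careful than the paper's.
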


\begin{proof} 
{Recall from Proposition}~\ref{prop-dutkay-jorgensen-lemma-4.6} {the isometry $W_\mu$ of $L^2(\Lambda^\infty, \mu)$ onto $\mathcal{L}^2(\mu)$.  Throughout the proof, we will assume that the finite Borel measure $\mu$ arises from a monic representation.}
We first claim that if   $T$ commutes with  $\pi_{univ}|_{C(\Lambda^\infty)}$, then $T$ maps $\mathcal{L}^2(\mu)$
into itself. 
To prove this, let  $x = f\sqrt{d\mu}$ be in $\mathcal{L}^2(\mu)$,  
and let
$T(x) = g\sqrt{d \zeta}$ {for $(g,\zeta)\in \mathcal{H}(\Lambda^\infty)$.}
 Then Proposition \ref{prop-commutant-dutkay-jorgensen-lemma-4.4} (b) implies that
$
\nu_{T(x)} << \nu_{x}.
$
By Proposition  \ref{prop-commutant-dutkay-jorgensen-lemma-4.4} (c), we have
\[ 
\nu_{x} = |f|^2\mu, \;\;\text{ and }\;\; \nu_{T(x)} = |g|^2 \zeta.
\]
Therefore $|g|^2 \zeta << \mu $,  so by the Radon--Nikodym theorem there exists $h \geq 0$ in  $L^1(\Lambda^\infty, \mu)$
such that $|g|^2\, d\zeta = h\, d\mu$. Then
\[
|g| \sqrt{d\zeta} =\sqrt{ h} \sqrt{d\mu},\;\; \text{ and } \;\;  |g|\, g \sqrt{d\zeta} =g \, \sqrt{ h}\, \sqrt{d\mu}.
\]
If $g =0$ on some Borel set $A$, then $\sqrt{h} \sqrt{d\mu}(A) = 0$ also. Therefore, 
\[
g\sqrt{d\zeta} =\begin{cases}
\frac{ g \sqrt{h} }{|g|} \sqrt{d\mu} \; \in \mathcal{L}^2(\mu), & g \not= 0 \\ 
 0, & g=0
 \end{cases}
\]
which shows that $T$ maps $\mathcal{L}^2(\mu)$
into itself. 

{
We now make some computations regarding the relationship between an arbitrary monic representation $\pi$ and the universal representation $\pi_{univ}$.  
 {Note that Equation \eqref{proj-valued-measure-universal-representation} implies that} $\pi_{univ}(\psi) (f \sqrt{d\mu_\pi}) = \left( \psi \cdot f\right) \sqrt{d\mu_\pi}$ for any $f \in L^2(\Lambda^\infty, \mu_\pi)$.

On the other hand, since $\pi$ is a monic representation, $\pi(\chi_{Z(\lambda)}) f = \chi_{Z(\lambda)} \cdot f \in L^2(\Lambda^\infty, \mu_\pi)$ by Equation \eqref{eq:range-sets}.  Therefore, 
\[ \pi_{univ}(\psi)(f \sqrt{d\mu_\pi}) = \left[ \pi(\psi)(f) \right] \sqrt{d\mu_\pi}.\]

By hypothesis, $T$ commutes with $\pi_{univ}|_{C(\Lambda^\infty)}$. 
Since $T$ preserves $\mathcal L^2(\mu)$ for each measure $\mu$ arising from a monic representation, there must exist  $g \in \mathcal L^2(\mu_\pi)$ such that  $T(f \sqrt{d\mu_\pi}) = g \sqrt{d\mu_\pi}$.  Consequently, 
\begin{align*}
T[ \pi(\psi) f] \sqrt{d\mu_\pi} & = T \pi_{univ}(\psi) ( f \sqrt{d\mu_\pi}) = \pi_{univ}(\psi) T(f \sqrt{d\mu_\pi})\\
 &= \pi_{univ}(\psi)( g \sqrt{d\mu_\pi})
 = [\pi(\psi)(g)] \sqrt{d\mu_\pi}  = \pi(\psi) T \left( f \sqrt{d\mu_\pi}\right),
\end{align*}
so (identifying $\mathcal L^2(\mu_\pi)\subseteq \H(\Lambda^\infty)$ with $L^2(\Lambda^\infty, \mu_\pi)$) we see that $T$ commutes with $\pi(\psi)$ for all $\psi \in C(\Lambda^\infty)$.} 
%
%

 Therefore, we can pull-back  $T$ to an operator $\widetilde{T}$ on $L^2(\Lambda^\infty, \mu)$  that commutes with all of the multiplication operators $\{ M_f: f \in C(\Lambda^\infty)\}$.  The fact (cf.~\cite{vJones}) that the  maximal abelian subalgebra of $\mathcal{B}(L^2(\Lambda^\infty, \mu)) $, for any finite Borel measure $\mu$, is the sub-algebra $L^\infty(\Lambda^\infty, \mu)$ consisting of multiplication operators now implies that 
$\widetilde{T}$ must be a multiplication operator too.
In other words, there exists a function $F_\mu$ in $L^\infty(\Lambda^\infty, \mu)$ such that 
\begin{equation}\label{eq:T}
T(f\sqrt{d\mu}) =F_\mu\, f\, \sqrt{d\mu}
\end{equation}
for all $f\in L^2(\Lambda^\infty,\mu)$, establishing $\it{(iii)}$.
It remains to check the properties of the functions $F_\mu$. One immediately observes that 
\[
\Vert F_\mu \Vert_{L^\infty(\mu)} \leq  \Vert  T \Vert 
\]
and this implies $\it{(i)}$. To check $\it{(ii)}$, suppose $\mu << \lambda$.  Then, for all $f \in L^2(\Lambda^\infty, \mu)$, 
we have $f\sqrt{d\mu} = f \sqrt{d \mu/ d\lambda} \sqrt{d\lambda}$, and hence 
\[
T(f\sqrt{d\mu}) = T\left(f \sqrt{\frac{d \mu}{ d\lambda}} \sqrt{d\lambda}\right) \quad 
\Longrightarrow \quad  F_\mu f\sqrt{d\mu} = F_\lambda f \sqrt{\frac{d \mu}{d\lambda}} \sqrt{d\lambda} \]
Thus, as elements of $\mathcal L^2(\lambda)$,
$ F_\mu f \sqrt{d \mu/ d\lambda} = F_\lambda f \sqrt{d \mu/d\lambda}
$
for any $f \in L^2(\Lambda^\infty, \mu)$, which implies $F_\mu {d \mu/ d\lambda} = F_\lambda  {d \mu/ d\lambda} \;\; (\lambda-a.e.).$  It follows that, for any Borel set $A$,
\[ \int_A(  F_\mu - F_\lambda ) \, d\mu = \int_A (F_\mu - F_\lambda) \frac{d\mu}{d\lambda} d\lambda = 0,\]
so $F_\mu = F_\lambda$, $\mu$-a.e. 
This proves $\it{(ii)}$.

For the converse, assume that  $T$ is given on $\mathcal L^2(\mu)$ by a function $F_\mu\in L^\infty(\Lambda^\infty,\mu)$ satisfying  $\it{(i),(ii),(iii)}$, { i.e.~$T(f\sqrt{d\mu})=F_\mu f \sqrt{d\mu}$ for all $f \sqrt{d\mu} \in\mathcal{H}(\Lambda^\infty)$ such that $\mu$ arises from a monic representation.}
%
Then {\it{(i)}} implies that T is bounded with 
 $\Vert T \Vert \leq \sup_\mu\{  \Vert F_\mu \Vert_{L^\infty(\mu)} \}.$
Since $T$ acts as a multiplication operator on each $\mathcal L^2(\mu)$, Part (c) of Proposition \ref{univ-repres-definition-result} implies that  $T$ commutes with $P(A)$ for all Borel subsets $A$ and therefore $T$ commutes with the {restricted} universal representation, {$\pi_{univ}|_{C(\Lambda^\infty)}$, which proves (a).}
 
 To prove (b), note that if an operator $T \in \mathcal B(\H(\Lambda^\infty))$  commutes with the universal representation $\pi_{univ}$ of $C^*(\Lambda)$ on $\mathcal H$, then in particular $T$ commutes with $\pi_{univ}|_{C(\Lambda^\infty)}$ on $\mathcal H$, and hence $T(f \sqrt{d\mu}) = F_\mu f \sqrt{d\mu}$ is a multiplication operator on each $\mathcal L^2(\mu)$ when the  measure $\mu$ arises from a monic representation.  In particular, $T$ is normal (when restricted to $\mathcal H$).  Therefore, by the Fuglede--Putnam theorem,  $T|_{\mathcal H}$  commutes with $\pi_{univ}$ iff $T S_\lambda^{univ}|_{\mathcal H} = S_\lambda^{univ} T|_{\mathcal H}$ for all $\lambda\in \Lambda$. 
 Using the formulas for $S_\lambda^{univ}$ from Theorem \ref{them-universal-representation-commutant}, we see that $T|_{\mathcal H}$ commutes with $\pi_{univ}|_{\mathcal H}$ if and only if, 
for each $f \sqrt{d\mu}\in \mathcal{H}$ and $\lambda\in\Lambda^n$,
\[ 
 F_{\mu \circ \sigma_\lambda^{-1}}(f \circ \sigma^n)\sqrt{d\mu \circ \sigma_\lambda^{-1}} =  (F_\mu \circ \sigma^n)(f \circ \sigma^n)\sqrt{d\mu \circ \sigma_\lambda^{-1}} ,
\]
or equivalently, $
 F_{\mu \circ \sigma_\lambda^{-1}}=  (F_\mu \circ \sigma^n) \ \text{ for } \lambda\in\Lambda^n, \; (\mu \circ \sigma_\lambda^{-1}) - a.e .
$
for all measures $\mu$ arising from monic representations of $C^*(\Lambda)$.  
  Composing with $\sigma_\lambda$ gives the desired result of (b).
\end{proof}
 
%
%

{
\begin{defn}
\label{def:nonnegative}
A monic representation $\{t_\lambda\}_{\lambda \in \Lambda}$ of a finite, source-free $k$-graph $\Lambda$ is said to be {\em nonnegative} if the functions $\{f_\lambda\}_{\lambda \in \Lambda}$ of the associated $\Lambda$-projective system on $\Lambda^\infty$ are nonnegative a.e.
\end{defn}}

 The following result, a consequence of Theorem \ref{them-universal-representation-commutant},  proves that every nonnegative monic representation is equivalent to a sub-representation of $\{S^{univ}_\lambda\}_{\lambda \in \Lambda}$, justifying the name `universal representation' for $\{S^{univ}_\lambda\}_{\lambda \in \Lambda}$.

 \begin{thm}
 \label{prop-universal-name} 
 Let $\Lambda$ be a finite $k$-graph with no sources.
 Let $ \{t_\lambda\}_{\lambda \in \Lambda}$ be a nonnegative  monic representation of $C^*(\Lambda)$ on $L^2(\Lambda^\infty, \mu_\pi)$. 
 Let $W$ be the isometry from $L^2(\Lambda^\infty, \mu_\pi)$ onto $\mathcal{L}^2(\mu_\pi )$ given in  Proposition \ref{prop-dutkay-jorgensen-lemma-4.6}, so that $
  W f = f\sqrt{d\mu_\pi} .$
  Then $W$ intertwines $\{t_\lambda\}_{\lambda\in\Lambda}$  with  the sub-representation $\{S_\lambda^{univ}|_{\mathcal L^2(\mu_\pi)}\}_{\lambda\in\Lambda}$ of the universal representation $\{S_\lambda^{univ}\}_\lambda$.
 \end{thm}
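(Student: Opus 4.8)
The plan is to write both representations out explicitly on $L^2(\Lambda^\infty,\mu_\pi)$ and on $\mathcal{H}(\Lambda^\infty)$, and then verify the single intertwining identity $W t_\lambda = S_\lambda^{univ} W$ for every generator $\lambda\in\Lambda$ by a direct computation with Radon--Nikodym derivatives inside the equivalence classes of $\mathcal{H}(\Lambda^\infty)$.

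First I would use Theorem \ref{thm-characterization-monic-repres} together with Definition \ref{def:nonnegative} to assume that $t_\lambda$ acts on $L^2(\Lambda^\infty,\mu_\pi)$ by $t_\lambda(f) = f_\lambda\cdot(f\circ\sigma^n)$, where $n=d(\lambda)$ and the $\Lambda$-projective functions $f_\lambda$ are nonnegative a.e. I would also recall the two facts extracted in the proof of Theorem \ref{thm-characterization-monic-repres}: the absolute continuity $\mu_\pi\circ\sigma_\lambda^{-1} << \mu_\pi$ of Equation \eqref{eq:mu-pi-abs-cts}, and the Radon--Nikodym identity $|f_\lambda|^2 = \frac{d(\mu_\pi\circ\sigma_\lambda^{-1})}{d\mu_\pi}$. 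Because the representation is nonnegative, these combine into the key identity $f_\lambda = \sqrt{\frac{d(\mu_\pi\circ\sigma_\lambda^{-1})}{d\mu_\pi}}$.

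The heart of the proof is then the computation, for $f\in L^2(\Lambda^\infty,\mu_\pi)$ and $\lambda\in\Lambda^n$,
\[
S_\lambda^{univ}(Wf) = S_\lambda^{univ}\big(f\sqrt{d\mu_\pi}\big) = (f\circ\sigma^n)\sqrt{d(\mu_\pi\circ\sigma_\lambda^{-1})},
\]
using the definition of $S_\lambda^{univ}$ from Proposition \ref{prop-univ-repres}. Since $\mu_\pi\circ\sigma_\lambda^{-1} << \mu_\pi$, the equivalence relation defining $\mathcal{H}(\Lambda^\infty)$ lets me rebase this class onto $\mu_\pi$; invoking $\frac{d(\mu_\pi\circ\sigma_\lambda^{-1})}{d\mu_\pi}=f_\lambda^2$ and the nonnegativity $\sqrt{f_\lambda^2}=f_\lambda$, I obtain
\[
(f\circ\sigma^n)\sqrt{d(\mu_\pi\circ\sigma_\lambda^{-1})} = \big(f_\lambda\cdot(f\circ\sigma^n)\big)\sqrt{d\mu_\pi} = W\big(f_\lambda\cdot(f\circ\sigma^n)\big) = W(t_\lambda f),
\]
which is exactly $S_\lambda^{univ}W = W t_\lambda$. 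To upgrade this to the statement that $\{S_\lambda^{univ}|_{\mathcal{L}^2(\mu_\pi)}\}$ is a genuine sub-representation, I would note that $W$ is a unitary onto $\mathcal{L}^2(\mu_\pi)$ by Proposition \ref{prop-dutkay-jorgensen-lemma-4.6}, so the intertwining forces $\mathcal{L}^2(\mu_\pi)=\ran(W)$ to be invariant under each $S_\lambda^{univ}$; invariance under the adjoints $(S_\lambda^{univ})^*$ follows from the formula in Proposition \ref{prop-univ-repres}(a) and the companion absolute continuity $\mu_\pi\circ\sigma_\lambda << \mu_\pi$ also established in the proof of Theorem \ref{thm-characterization-monic-repres}. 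Hence $\mathcal{L}^2(\mu_\pi)$ reduces $\pi_{univ}$ and $W$ implements the desired unitary equivalence.

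The one genuinely delicate point --- and the reason nonnegativity cannot be omitted --- is the step $\sqrt{f_\lambda^2}=f_\lambda$. For a general $\Lambda$-projective system one would only recover $|f_\lambda|$ on the right-hand side, yielding the operator $|f_\lambda|\cdot(f\circ\sigma^n)$ rather than $t_\lambda$, so $W$ would fail to intertwine. Everything else is bookkeeping with the equivalence classes of the universal Hilbert space, which I expect to be routine.
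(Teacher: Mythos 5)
Your proposal is correct and follows essentially the same route as the paper: reduce to the $\Lambda$-projective form $t_\lambda(f)=f_\lambda\cdot(f\circ\sigma^{d(\lambda)})$ with $f_\lambda=\sqrt{d(\mu_\pi\circ\sigma_\lambda^{-1})/d\mu_\pi}$ via Theorem \ref{thm-characterization-monic-repres} and nonnegativity, then verify $W t_\lambda = S_\lambda^{univ}W$ by rebasing $\sqrt{d(\mu_\pi\circ\sigma_\lambda^{-1})}$ onto $\sqrt{d\mu_\pi}$ inside $\mathcal H(\Lambda^\infty)$ --- the identical chain of equalities appears in the paper, merely read in the opposite direction. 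Your extra paragraph checking that $\mathcal L^2(\mu_\pi)$ is invariant under both $S_\lambda^{univ}$ and its adjoint is a small but welcome addition that the paper leaves implicit.
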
 
 
 \begin{proof} 
 By Theorem \ref{thm-characterization-monic-repres} {and Proposition~\ref{prop:lambda-proj-repn}}, we can assume that $t_\lambda$ is of the form 
 \[ t_\lambda(f) = f_\lambda \cdot (f \circ \sigma^{d(\lambda)}),\]
 {where, since $\{t_\lambda\}_{\lambda \in \Lambda}$ is assumed nonnegative, we may assume $f_\lambda=\sqrt{\frac{d(\mu_\pi\circ (\sigma_\lambda)^{-1})}{d\mu_\pi}}$.}
 By Theorem \ref{them-universal-representation-commutant}, 
 \[
 \begin{split}
 W (t_\lambda f) &= W( f_\lambda ( f\circ \sigma^{d(\lambda)} ))=  f_\lambda ( f\circ \sigma^{d(\lambda)} )\sqrt{d\mu_\pi}= ( f\circ \sigma^{d(\lambda)} )\sqrt{| f_\lambda|^2d\mu_\pi}\\
 &= ( f\circ \sigma^{d(\lambda)} )\sqrt{d [\mu_\pi \circ (\sigma_\lambda)^{-1}] }= S_\lambda^{univ}(f\sqrt{d\mu_\pi})=S_\lambda^{univ}W(f).
 \end{split}
 \]
 In other words, $W$ intertwines $\{t_\lambda\}_{\lambda\in \Lambda}$ and $\{S_\lambda^{univ}|_{\mathcal L^2(\mu_\pi)}\}_{\lambda\in\Lambda}$, as claimed.
 \end{proof}

\end{document}